\documentclass[12pt]{article}
\usepackage[english]{babel}
\usepackage[a4paper,top=2cm,bottom=2cm,left=2cm,right=2cm,marginparwidth=1.75cm]{geometry}

\usepackage{amssymb}
\usepackage{amsmath}
\usepackage{amsthm}
\usepackage{graphicx}
\usepackage{xcolor}
\usepackage[colorlinks=true, allcolors=black]{hyperref}
\usepackage{mathtools}
\newcommand{\overbar}[1]{\mkern
1.5mu\overline{\mkern-1.5mu#1\mkern-1.5mu}\mkern 1.5mu}
\usepackage{enumerate}

\newtheorem{theorem}{Theorem}[section]

\newtheorem{COROLLARY}{Corollary}[section]

\newtheorem{LEMMA}{Lemma}[section]
\newtheorem{REMARK}{Remark}[section]
\newtheorem{DEFINITION}{Definition}[section]
\numberwithin{equation}{section}

\usepackage{xcolor}
\usepackage{titlesec}
\newcommand{\inlinesubsection}[1]{%
  \refstepcounter{subsection}
  \addcontentsline{toc}{subsection}{\protect\numberline{\thesubsection}#1}% ToC
  \textbf{\thesubsection\ #1. }
}
\newtheorem*{Ex}{Examples}

\DeclareMathOperator{\riem}{\mathrm{Rm}}
\DeclareMathOperator{\ric}{\mathrm{Ric}}
\DeclareMathOperator{\vol}{\mathrm{vol}}

%\title{\textbf{}}
%\author{}
%\date{}

\begin{document}
\title{\textbf{$L^1$ curvature bounds for Type I Ricci flows}} 

\author{Panagiotis Gianniotis\footnote{Department of Mathematics, National and Kapodistrian University of Athens, Greece, \qquad\qquad\qquad\qquad pgianniotis@math.uoa.gr } \and Konstantinos Leskas\footnote{Department of Mathematics, National and Kapodistrian University of Athens, Greece, kostasleskas@math.uoa.gr}}
\date{}

\maketitle

\begin{abstract} \noindent 
We show $L^1$--bounds of the Riemann curvature tensor on a smooth closed $n$--dimensional Ricci flow. To achieve this we introduce the notion of a neck of maximal symmetry, similar to the one in Cheeger--Jiang--Naber and Jiang--Naber and establish a decomposition result by balls with uniform curvature bounds that satisfy an appropriate $(n-2)$--content estimate. 
\end{abstract}

%\tableofcontents 

\section{Introduction}
A smooth $1$--parameter family $g(t)$, $t\in [0,T)$ of Riemannian metrics on a closed (compact with no boundary) manifold $M$ with $n=\dim M$ is said to evolve under Ricci flow if
\begin{equation*}
    \frac{\partial g}{\partial t} = -2\ric_{g(t)}.
\end{equation*}
Typically, a smooth Ricci flow will encounter finite time singularities, namely $T<+\infty$ and there is no smooth continuation of $g(t)$ after $t=T$, in  which case the norm of the curvature tensor blows up, namely
\begin{equation*}
\lim_{t\rightarrow T} \max_M |\riem(g(t))|_{g(t)} = +\infty.
\end{equation*}
Understanding the structure of the geometry of the evolving manifold as singularities develop lies at the center of potential applications of Ricci flow in Geometry and Topology.

In dimension three, after the work of Perelman \cite{Per_02, Per2,Per3} it is well understood that most local singularities are modeled on the shrinking cylinder Ricci flow on $\mathbb R \times \mathbb S^2$, or its quotients. Moreover, tangent flows at the singular points converge to self--similar shrinking Ricci flows, induced by shrinking Ricci solitons. For type--I singularities this was established by Naber \cite{Naber}, Enders--Buzano--Topping \cite{Enders} and Buzano--Mantegazza \cite{MantMul}, while for general singularities it was achieved by the compactness and structure theory of Bamler \cite{Bamler_entropy, Bamler_comp, Bamler}, see also the recent work of Fang--Li \cite{Fang2}. Although in dimension three shrinking Ricci solitons are classified, these are just the standard soliton structure on $\mathbb R^3$, as well as in $\mathbb S^3$ and $\mathbb R\times \mathbb S^2$ and their quotients, such classification is widely open in higher dimensions and in fact it is possible that the limit solitons are singular.

It is however possible to classify the singularity models according to the number of Euclidean factors they split. Since there is no, non-trivial, shrinking Ricci soliton splitting more than $n-2$ Euclidean factors, Perelman's pseudolocality theorem then implies that shrinking solitons that arise as blow-up limits at the singular points should split at most $n-2$ Euclidean factors. In this case, the only (orientable) possibility is the shrinking cylinder soliton in $\mathbb R^{n-2}\times \mathbb S^2$, since the only orientable 2d shrinking soliton is the shrinking sphere. 

In terms of their volume, it is expected that the most prevalent singularities of Ricci flow are modelled on these solitons. A typical example is that of a product Ricci flow on $\mathbb S^2\times T^{n-2}$, where $T^{n-2}$ is a flat torus, becoming singular at $t=0$. In this example, all the tangent flows are given by the self--similar shrinking Ricci flow on $ \mathbb R^{n-2}\times \mathbb S^2$. Moreover, its volume decays at a rate $\sim |t|$  while the curvature blows-up at a rate $\sim |t|^{-1}$. It is thus expected that given a smooth closed Ricci flow $(M,g(t))_{t\in [0,T)}$, the volume of regions with $|\riem|\gtrsim r^{-2}$ will also exhibit similar behaviour, namely
$$\vol_{g(t)}\left( \{|\riem_{g(t)}| \gtrsim r^{-2} \} \right) \lesssim r^2,$$
and even that for every $t\in [0,T)$
$$\int_M |\riem |(\cdot,t) d\vol_{g(t)} \leq C,$$
for some uniform constant $C<+\infty$.

In this paper, we establish that this is indeed the case, if $(M,g(t))_{t\in [0,T)}$ satisfies a type--I bound on the curvature, namely there is a constant $C_I$ such that
\begin{equation}
|\riem|\leq \frac{C_I}{T-t}
\end{equation}
on $M\times [0,T)$.

\begin{theorem}\label{intro:thm}
Let $(M,g(t))_{t\in [0,T)}$ be a smooth closed Ricci flow satisfying 
\begin{align}
|\riem|&\leq \frac{C_I}{T-t},\label{eqn:intro_type_i}\\
\nu(g(0),2T)&\geq -\Lambda. \label{eqn:intro_entropy}
\end{align}
There is a constant $C=C(n,C_I,\Lambda,\vol_{g(0)}(M),T)<+\infty$ such that
\begin{equation}\label{eqn:intro_L1}
\int_M |\riem|(\cdot,t) d\vol_{g(t)}\leq C,
\end{equation}
for every $t\in [0,T)$.
\end{theorem}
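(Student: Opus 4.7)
The plan is to reduce \eqref{eqn:intro_L1} to a good covering estimate, in the spirit of Cheeger--Jiang--Naber and Jiang--Naber. Fix $t \in [0,T)$ and set $\tau = T - t$. The aim is to produce, with uniform constants, a covering $\{B_{g(t)}(x_i, r_i)\}_{i \in \mathcal{I}}$ of $M$ such that $|\riem| \leq r_i^{-2}$ on each $B_{g(t)}(x_i, r_i)$ and $\sum_{i \in \mathcal{I}} r_i^{n-2} \leq C_0$. Combined with the volume bound $\vol_{g(t)}(B(x_i, r_i)) \leq C r_i^n$, which follows from \eqref{eqn:intro_type_i}--\eqref{eqn:intro_entropy} via Perelman's no-local-collapsing, this yields
\begin{equation*}
\int_M |\riem|(\cdot, t)\, d\vol_{g(t)} \;\leq\; \sum_{i} \int_{B(x_i, r_i)} r_i^{-2}\, d\vol_{g(t)} \;\leq\; C \sum_{i} r_i^{n-2} \;\leq\; CC_0,
\end{equation*}
which is exactly \eqref{eqn:intro_L1} and, crucially, the right-hand side is independent of $t$.

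To construct the covering we adapt the neck decomposition strategy to Ricci flows. The hypotheses \eqref{eqn:intro_type_i}--\eqref{eqn:intro_entropy}, together with Perelman's pseudolocality and Bamler's $\mathbb{F}$--compactness, guarantee that every parabolic rescaling of the flow converges subsequentially to a singular shrinking Ricci soliton splitting at most $n-2$ Euclidean factors, the extremal case being the round cylinder $\mathbb{R}^{n-2}\times \mathbb{S}^2$. Quantifying this at parameters $\varepsilon, r>0$, each $(x,t)$ is either \emph{$(\varepsilon,r)$-regular}, meaning $|\riem|\leq r^{-2}$ on the parabolic $r$-ball, or the centre of a \emph{neck of maximal symmetry at scale $r$}, meaning the parabolic $r$-ball is $\varepsilon$-close to the shrinking cylindrical soliton. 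The key geometric consequence is that the set of points supporting a neck is well-approximated by an $(n-2)$-dimensional subset lying along the cylindrical axis.

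The decomposition then proceeds inductively. Starting from the top scale $r_0\sim \sqrt{\tau}$ one selects a maximal Vitali family of $r_0$--balls; each such ball is either regular (and then added to the good cover) or supports a neck of maximal symmetry. In the neck case, a quantitative uniqueness and rigidity theorem for cylindrical tangent flows allows one to resolve the neck into an $(n-2)$-dimensional Reifenberg--approximated neck region together with smaller sub-balls, whose total $(n-2)$--content is controlled by that of the original ball. One iterates at finer scales; the type-I bound \eqref{eqn:intro_type_i} forces every ball at scale below $\sim \sqrt{\tau/C_I}$ to be regular, so the iteration terminates after finitely many steps and produces a covering with a uniform cumulative content estimate.

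The principal obstacle is closing the inductive loop, i.e.\ showing that the total $(n-2)$--content at each stage sums to a constant depending only on $n, C_I, \Lambda, \vol_{g(0)}(M), T$. This requires three parabolic analogues of pieces of the Jiang--Naber neck structure theorem: (i) a quantitative cylindrical rigidity stating that if the parabolic $r$--ball is $\varepsilon$--cylindrical then the axis of symmetry drifts only by $\delta(\varepsilon)\to 0$ across nearby scales; (ii) a parabolic cone-splitting principle that upgrades approximate $(n-2)$--symmetry at several points at scale $r$ to genuine cylindrical symmetry at a slightly larger scale; and (iii) a Reifenberg-type $(n-2)$--content packing inequality that converts (i) and (ii) into a summable bound. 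Developing these parabolic analogues, which have a well-established but delicate elliptic counterpart, is the technical heart of the argument; once they are in place, the assembly of the covering and the $L^1$ bound follow as in the first paragraph.
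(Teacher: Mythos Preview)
Your overall strategy matches the paper's: reduce the $L^1$ bound to a covering by balls with curvature $\leq r_i^{-2}$ and bounded $(n-2)$--content, obtained via a neck decomposition in the spirit of Cheeger--Jiang--Naber, then sum. A few points of divergence from the paper's actual route are worth flagging. Under the type-I bound all blow-ups are smooth gradient shrinkers, so classical Cheeger--Gromov--Hamilton compactness suffices and Bamler's $\mathbb{F}$--theory is not invoked. More substantively, the paper does \emph{not} use a \L ojasiewicz-type ``uniqueness and rigidity theorem for cylindrical tangent flows'' to control necks; the content control instead comes from the neck structure theorem of \cite{PanGian2} (Theorem~\ref{neck structure} here), which gives two-sided Ahlfors regularity of the packing measure on a neck and bypasses any \L ojasiewicz inequality. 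The decomposition is organised through Perelman's pointed entropy $\mathcal W_x(\tau)$ and the associated \emph{entropy pinching set} rather than a direct symmetry dichotomy; your cone-splitting item (ii) appears as Theorem~\ref{q.split}, and the iteration terminates via smoothness of the rescaled flow at the final time together with a finite entropy-drop argument (Lemma~\ref{v-ball dec}), not because the type-I bound forces regularity at small scales. Finally, the volume upper bound $\vol(B(x_i,r_i))\leq C r_i^n$ comes from the non-inflating estimate (or Bishop--Gromov, given the curvature bound on the ball), not from no-local-collapsing, which gives the opposite inequality.
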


Theorem \ref{intro:thm} was first established in \cite{PanGian2} for 3d type--I Ricci flows, where estimate \eqref{eqn:intro_L1} also suffices to establish Perelman's bounded diameter conjecture in that setting, by the result of Topping in \cite{Top}. Unfortunately, in higher dimensions the $L^1$--bound on the curvature tensor is not enough to control the diameter of the evolving manifold. Moreover, Theorem \ref{intro:thm} improves the $L^p$ bounds, for $p<1$, first established in the type--I case in \cite{PanGian3} and recently for non--collapsed Ricci flow limit spaces in \cite{Fang2}. In fact we prove a stronger integral estimate, for the curvature radius, defined below, in Theorem \ref{curv radius int}. \\

Theorem \ref{curv radius int} is a consequence of our $b$--ball decomposition Theorem \ref{b-ball dec}. To describe the result, let $(M,g(t))_{t\in [-2,0]}$ be a Ricci flow and $R<+\infty$ a large positive number. We denote
$$\tilde B_R(x,r)= B(x,-r^2, Rr)=B_{g(-r^2)}(x,Rr).$$
and define the curvature radius $r^R_{\riem}(x)$ as
$$r^R_{\riem}(x)=\sup\left\{ r>0, \textrm{$|\riem|\leq r^{-2}$ in $\tilde B_R(x,r)\times [-r^2,0]$}\right\}.$$

\begin{theorem}[Theorem \ref{b-ball dec}] \label{intro:thm_b_ball}
Let $(M,g(t),p)_{t\in [-2,0]}$ be a pointed smooth closed Ricci flow satisfying \eqref{eqn:intro_type_i} and \eqref{eqn:intro_entropy}, and let $R\geq R(n,C_I,\Lambda)$. Then there is a covering of the form
\begin{equation*}
\tilde B_R(p,1) \subset \bigcup_b \tilde B_R(x_b,r_b),
\end{equation*}
where $r_{\riem}^{2R}(x_b) >2r_b$, $r_b<1$ and 
\begin{equation*}
\sum_{b} r_b^{n-2} \leq C(n,C_I,\Lambda,R).
\end{equation*}
\end{theorem}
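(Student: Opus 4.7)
The plan is to produce the covering by an inductive stopping-time refinement on parabolic scales, and to bound its $(n-2)$-content using the neck decomposition introduced earlier in the paper.

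First, construct the cover iteratively. Start with $r=1$ and select a maximal parabolic Vitali family $\{x_i\}\subset \tilde B_R(p,1)$ so that the balls $\tilde B_R(x_i,r/5)$ are pairwise disjoint; then the $\tilde B_R(x_i,r)$ cover $\tilde B_R(p,1)$. For each $x_i$, test whether $r^{2R}_{\riem}(x_i)>2r$. If so, include $\tilde B_R(x_i,r)$ in the final decomposition as a \emph{good} ball; otherwise, mark it \emph{bad} and recurse inside it at scale $r/2$. Smoothness of $g(t)$ on any compact sub-interval of $[-2,0]$ bounds the curvature uniformly, so the process terminates after finitely many generations and every terminal ball is good. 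The nontrivial content is the bound on $\sum_b r_b^{n-2}$.

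Next, identify bad balls with necks. If $\tilde B_R(x,r)$ is bad, there is a spacetime point in it with $|\riem|\gtrsim r^{-2}$. The hypotheses \eqref{eqn:intro_type_i}--\eqref{eqn:intro_entropy}, combined with Perelman's $\varepsilon$-regularity and pseudolocality and Bamler's compactness, force the $r^{-1}$-rescaling of the flow at that point to be $\mathbb{F}$-close to a non-flat gradient shrinking Ricci soliton splitting at most $n-2$ Euclidean factors. Hence bad-ball centers concentrate along an approximating $(n-2)$-plane $V$ coming from the splitting, and one can group them, across all generations of the iteration, into a controlled family of neck regions of maximal symmetry $\mathcal N$ in the sense established earlier in the paper.

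Finally, close the estimate inside each neck. The quantitative $(n-2)$-splitting yields a bi-Lipschitz projection of bad-ball centers onto $V\simeq \mathbb R^{n-2}$, so a Vitali/Reifenberg packing in the Euclidean $(n-2)$-plane gives
\begin{equation*}
\sum_{b:\,\tilde B_R(x_b,r_b)\subset \mathcal N} r_b^{n-2} \leq C(n,C_I,\Lambda,R)\, s_{\mathcal N}^{n-2},
\end{equation*}
where $s_{\mathcal N}$ is the parent scale of $\mathcal N$. Summing over the maximal necks covering the bad set in $\tilde B_R(p,1)$, and noting that the good balls at each generation Vitali-pack into their parent region, produces the desired total bound. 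The main obstacle is exactly this last step: one cannot simply reuse the Cheeger--Jiang--Naber Reifenberg argument for Einstein manifolds, because one has to work with Bamler's $\mathbb{F}$-distance and pointed Nash entropy, and track the time evolution of the approximating splitting through the parabolic blowdowns; this is precisely where the neck-of-maximal-symmetry framework of the paper does the essential work. Once that rigidity is in place, the CJN/Jiang--Naber bookkeeping translates and closes the content estimate.
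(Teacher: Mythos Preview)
Your outline misses the key mechanism that makes the content estimate close, and in doing so it leaves a genuine gap.

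The paper does not simply sort balls into ``good'' (large curvature radius) and ``bad'' and recurse; a bad ball has no a priori structure beyond $r^{2R}_{\riem}\leq 2r$, and there is no direct way to conclude from that alone that its rescaling is close to an $(n-2)$-splitting shrinker. The paper's essential device is the \emph{entropy pinching set} $\mathcal P_{r,\xi,R}(x)$, defined relative to a reference value $\overbar W\leq \inf \mathcal W_y(\xi^{-1})$. Bad balls are then sub-classified according to whether the Minkowski $(n-2)$-content of this pinching set is large ($c$-balls), small but nonempty ($d$-balls), or empty ($e$-balls). The quantitative splitting theorem (Theorem~\ref{q.split}) is what converts ``large pinching content'' into ``close to an $(n-2)$-self-similar model''; without it, your step ``bad balls concentrate along an approximating $(n-2)$-plane'' is unjustified.

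More seriously, your termination/content argument is incomplete. You write ``Summing over the maximal necks covering the bad set'' but give no reason the sum of $s_{\mathcal N}^{n-2}$ over necks is bounded. In the paper this is handled by two separate mechanisms working in tandem: (i) within a $c$-ball, the neck construction (Lemma~\ref{neck construction/estimates}) together with the neck structure theorem forces the residual $c$-content to be $\leq C\epsilon$ times the parent content, so the $c$-content decays geometrically under iteration; (ii) the $e$-balls, where the pinching set is empty, carry an \emph{entropy jump}: $\inf_{\tilde B_{2R}(x_e,r_e)}\mathcal W(\cdot,r_e^2)\geq \overbar W+\xi$. This jump is packaged into Lemma~\ref{v-ball dec}, and since $-\Lambda\leq \mathcal W\leq 0$, it can occur at most $O(\Lambda/\xi)$ times, which is what finally bounds the total content. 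Your proposal has no analogue of this entropy-drop counting, and a pure Vitali/Reifenberg packing across generations does not suffice.

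Finally, the invocation of Bamler's $\mathbb F$-distance and Nash entropy is out of place here: the paper works entirely in the type-I category with Cheeger--Gromov--Hamilton convergence and Perelman's pointed entropy $\mathcal W_p(\tau)$, and the neck structure theorem from \cite{PanGian2} is stated and used in that framework.
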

 The proof of Theorem \ref{intro:thm_b_ball} follows an iterative covering scheme similar to that in \cite{JiangNaber, CJN}, and relies on the neck structure theorem obtain for type--I Ricci flows in \cite{PanGian2}, see Theorem \ref{neck structure}. Roughly speaking, this scheme iteratively refines covers of $\tilde B_R(p,1)$ with balls of consecutively smaller scales. At each step, the balls of each such cover are separated into various categories by quantifying whether the flow is close to a shrinking cylinder $\mathbb R^{n-2}\times \mathbb S^2$ or not. This process continues until it terminates with the balls $\tilde B_R(x_b,r_b)$ with  $2r_b<r_{\riem}^{2R}(x)$. Regions in the flow with this cylindrical behaviour over many scales can be eventually described as $(n-2)$--neck regions, in the sense of \cite{PanGian2} and Definition \ref{def:necks_ms}, and it is for these regions that the neck structure Theorem \ref{neck structure} provides the necessary content estimates. The whole covering argument is implemented in a series of Lemmata, in particular Lemma \ref{d-ball dec}, Lemma \ref{neck construction/estimates}, Lemma \ref{inductive covering} and Lemma \ref{v-ball dec}. 

A crucial concept in this process is the \textit{entropy pinching} set around $x\in M$, defined as
$$\mathcal P_{r,\xi,R}(x)=\{y\in \tilde B_{R/2} (x,r) | \mathcal W_{y}(\xi r^2) - \overbar W<\xi\},$$
where $\mathcal W_y(\tau)$ denotes Perelman's pointed entropy, see \eqref{pointed entropy} for the definition. Here, $\xi>0$ is a small constant, whereas the constant $\overbar W\leq 0$ should be thought to satisfy $\overbar W\leq \inf_{\tilde B_{2R}(x,1)} \mathcal W_y(\xi^{-1})$, so that the entropy pinching set is designed to detect points where the pointed entropy is close to $\overbar W$ over many scales.  It turns out that to detect balls with the required cylindrical behaviour it is enough to search for balls with pinching set of large enough Minkowski $(n-2)$--content. This is the content of Theorem \ref{q.split}, the quantitative splitting theorem. Then, Lemma \ref{d-ball dec} treats balls whose pinching set has small Minkowski $(n-2)$--content, while Lemma \ref{neck construction/estimates} constructs neck regions in those balls whose pinching set has large Minkowski $(n-2)$--content. The combination of these two lemmata leads to a covering of a given $\tilde B_R(p,1)$ by balls $\tilde B_R(x_b,r_b)$, as in the statement of Theorem \ref{b-ball dec}, together with balls with empty entropy pinching set. Lemma \ref{inductive covering} and Lemma \ref{v-ball dec} then show that these balls can be eliminated from the final cover, essentially because they imply a small but fixed jump in the pointed entropy, which can only happen on finitely many scales.\\

Both \cite{PanGian2} and this work rely on the neck structure Theorem \ref{neck structure}, established in \cite{PanGian2}, and a neck decomposition, akin to those in  \cite{JiangNaber, CJN}. To our knowledge, this work and \cite{PanGian2} are the first instances in which these techniques have been applied to the Ricci flow. In the context of mean curvature flow of hypersurfaces in $\mathbb R^{n+1}$, neck decompositions in the spirit of \cite{JiangNaber, CJN} have been applied by Fang--Li \cite{Fang} to establish sharp volume estimates for the singular and quantitative singular sets in the mean convex case, and recently by Huang--Jiang in \cite{Huang} to prove the bounded (intrinsic) diameter conjecture for surfaces in $\mathbb R^3$, without any assumptions on the nature of the singularities that may develop. The question in higher dimensions remains however open in such generality, and only understood when only cylindrical or conical singularities develop \cite{PanGian4,Du}.

In the results \cite{PanGian4,Du,Fang,Huang} a crucial ingredient is the \L ojasiewicz inequality for cylindrical shrinkers by Colding--Minicozzi \cite{Col-Min}, and not a neck structure result, as in our work and \cite{PanGian2}. Such an inequality for cylindrical Ricci solitons for the Ricci flow was only recently established in the remarkable preprint \cite{Fang1} by Fang--Li. It is likely that such a result, combined with the decomposition ideas developed in our work as well as in \cite{Fang}, and the weak compactness and structure theory for the Ricci flow by Bamler \cite{Bamler_entropy,Bamler_comp, Bamler} and Fang--Li \cite{Fang2}, will be able to treat the case of any type of cylindrical singularities for the Ricci flow, without the need of a type--I curvature bound. However, the neck structure theorem in \cite{PanGian2}, although only established under the type--I assumption, completely bypasses the need for a \L ojasiewicz inequality and can give information for any singularity model. We thus expect that it can be applied to understand the structure of the full singular set, with no assumption on the type of singularities, and in any dimension. We will treat this general case in forthcoming work.

\medskip

\noindent {\bf Acknowledgments:}  This research was supported by the Hellenic Foundation for Research and Innovation (H.F.R.I.) under the “2nd Call for H.F.R.I. Research Projects to support
Faculty Members \& Researchers” (Project Number: HFRI-FM20-2958).

\section{Preliminaries} 

In this section we recall some necessary preliminaries for smooth closed (compact with no boundary) Ricci flows and set up some notation.

\medskip

\noindent
\inlinesubsection{Pointed entropy, a priori assumptions and $R$--scale distance} For a smooth closed Ricci flow $(M, g(t))_{[-T, 0]}$ we say that the function $u(\cdot , t) =(4\pi|t|)^{-n/2} e^{-f(\cdot , t)} $ evolves by the conjugate heat equation if 
\begin{equation}
\label{conj heat}
\frac{\partial u}{\partial t} = - \Delta_{g(t)} u + \text{Scal}_{g(t)}u, 
\end{equation}
where $\text{Scal}_{g(t)}$ denotes the scalar curvature with respect to the metric $g(t)$. Equivalently 
\begin{equation}
\label{equiv conj heat}
\frac{\partial f}{\partial t} = -\Delta_{g(t)}f+|\nabla f|_{g(t)}^2- \text{Scal}_{g(t)} + \frac{n}{2|t|}.
\end{equation} 
We denote by $u_{(p,0)}$ the conjugate heat kernel based at $(p,0)$. This means that $u_{(p,0)} >0$, satisfies the conjugate heat equation \eqref{conj heat} and furthermore 

$$\int_M u_{(p,0)}(x,t) \phi(x) dV_{g(t)} (x) \xrightarrow[t \to 0]{} \phi(p),$$
for any $\phi \in C^\infty(M)$ and for all $t$
$$\int_M u_{(p,0)}(x,t) dV_{g(t)}(x) = 1.$$ 
For the existence and uniqueness of the conjugate heat kernel based at $(p,0)$ one can see  \cite[Chapter 24]{Chow II}. A notion relevant to the conjugate heat kernel is that of a conjugate heat flow. We denote by $(\nu_{f,t})$ a conjugate heat flow of the flow $(M, g(t))_{[-T,0]}$ if $d\nu_{f,t} = (4\pi|t|)^{-n/2} e^{-f(\cdot,t)}dV_{g(t)}$ is a probability measure for each $t<0$, where the function $u(\cdot,t) = (4\pi|t|)^{-n/2} e^{-f(\cdot,t)} $ is a positive solution to \eqref{conj heat} and $\int_{M}u(\cdot,t)dV_{g(t)} = 1$ for any $t$. If the density of the conjugate heat flow is a conjugate heat kernel based at $(p,0)$ we will also write $\nu_{(p,0)}.$

From the seminal work of Perelman \cite{Per_02}, we can define the entropy functional of a closed Riemannian manifold $M$ as 
\begin{equation*}
    \mathcal W(g, f, \tau) =  \int_M  \bigg( \tau ( \text{Scal} + |\nabla f|^2 ) + f - n  \bigg)\dfrac{e^{-f}}{(4\pi\tau)^{n/2}}dV_g,
\end{equation*} 
where $\tau>0, f \in C^\infty(M)$. We also define the functionals 

\begin{equation*}
    \mu(g, \tau) = \inf_{f\in C^\infty(M)}\bigg\{\mathcal W(g, f, \tau) \bigg |  \: \int_M \dfrac{e^{-f}}{(4\pi\tau)^{n/2}} dV_g = 1\bigg\},
\end{equation*}

\begin{equation*}
\nu(g, \tau) = \inf_{0< \tau' < \tau} \mu(g, \tau').    
\end{equation*}
Note that $\mu$ is finite for each $\tau>0$, by the celebrated logarithmic Sobolev inequality for closed Riemannian manifolds, see \cite{Per_02} or \cite[Chapter 6]{Chow I}.

From the monotonicity formula proven in \cite{Per_02} we have that

\begin{equation}
\label{monotonicity}
    \dfrac{d}{dt} \mathcal W(g(t), f(\cdot , t), |t|) = 2|t|\int_M \bigg | Ric_{g(t)} + \nabla^2_{g(t)} f - \dfrac{g}{2|t|}  \bigg|^2 u(\cdot, t) \: dV_{g(t)} \geq 0,
\end{equation}
where $u(\cdot, t) = (4\pi |t|)^{-n/2}e^{-f(\cdot , t)}$ evolves by the conjugate heat equation \eqref{conj heat}. In particular, we have that for $t_1, t_2 \in[-T, 0]$ with $t_1 < t_2$ the functionals $\mu, \nu$ are non--decreasing
$$\mu(g(t_1), |t_1|) \leq \mu(g(t_2), |t_2|),$$ 
 $$\nu(g(t_1), |t_1|) \leq \nu(g(t_2), |t_2|).$$ 

From \cite{MantMul} we can define the pointed entropy functional of a smooth closed Ricci flow $(M, g(t))_{[-T, 0]}$ as follows. Let $u_{(p,0)}(x,t) =(4\pi|t|)^{-n/2} e^{-f(x, t)} $ be the conjugate heat kernel based at $(p,0)$. Then the pointed entropy at the point $p\in M$ and scale $|t|$ is given by
\begin{equation}
\label{pointed entropy}
\mathcal W_p(|t|) = \mathcal W(g(t), f( \cdot , t), |t| ).
\end{equation}

\noindent 
From \eqref{monotonicity} we deduce that 
\begin{equation}
\text{ if } |t_1| < |t_2| \text{ then } \mathcal W_p(|t_1|) \geq \mathcal W_p (|t_2|)
\end{equation} 
and from Perelman's Harnack inequality, proven in \cite{Per_02}, see also  \cite[Theorem 16.44]{Chow II}, we have that 
\begin{equation}
\label{PH in}
    \mathcal W_p(|t|) \leq 0,  \: \text{ for all t $\leq$ 0.} 
\end{equation}

\medskip 
\noindent
We will be interested in smooth, closed Ricci flows $(M, g(t))_{[-T, 0]}$ that satisfy the following: 

\begin{enumerate}
    \item[RF1)]\label{RF1} $\displaystyle \sup_{x\in M} |Rm|_{g(t)}(x, t) \leq \dfrac{C_I}{|t|} $, for all $t\in[-T,0)$ for some $C_I >0.$ 

    \item[RF2)]\label{RF2} $\nu(g(-T), 2T)\geq - \Lambda,$ for some $\Lambda>0.$ 
    \end{enumerate}

\noindent
We will denote this class of Ricci flows by $\mathcal{F}(n, C_I, \Lambda).$ We have the following: 

\begin{LEMMA}\label{properties} Let $(M, g(t))_{[-T,0]}\in \mathcal{F}(n,C_I, \Lambda)$ then
\begin{enumerate}
     \item\label{1} $\nu(g(t), |t|) \geq -\Lambda$, for all $t \in [-T, 0].$ 
     \item\label{2} If $u_{(x,t)}(y,s) = \dfrac{e^{-f(y,s)}}{(4\pi|t-s|)^{n/2}}$, for $s<t$, is a conjugate heat kernel based at $(x,t)$ then 
    $$\dfrac{d^2_{g(s)}(x,y)}{H |t-s|} - H \leq f(y,s) \leq H\bigg( \dfrac{d^2_{g(s)}(x,y)}{|t-s|} +1 \bigg).$$ 
     \item\label{3} $d_{g(s)} (x,y) \leq K(\sqrt{|s|}- \sqrt{|t|} )+d_{g(t)}(x,y)$, for $s<t<0$, 
     \item\label{4}$d_{g(t_1)}(x,y) \leq \bigg(\frac{|t_2|}{|t_1|}\bigg)^{C_I} d_{g(t_2)}(x,y)$ for any $t_2<t_1<0$
\end{enumerate} 
 where $H$ is a constant that depends on the dimension $n$ and the constants $C_I, \Lambda$ and $K$ a constant that depends on $n, C_I$   
 \end{LEMMA}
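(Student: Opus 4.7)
The plan is to handle the four claims essentially independently, using only the two a priori assumptions RF1 and RF2 together with standard tools (monotonicity of $\nu$, heat kernel asymptotics under type--I and noncollapsing, Perelman's distance distortion and integration of the Ricci curvature).

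For part (\ref{1}), I will combine the monotonicity of $\nu$ along the Ricci flow with its monotonicity in $\tau$. Namely, the monotonicity formula \eqref{monotonicity} gives $\nu(g(-T),|{-T}|) = \nu(g(-T),T) \leq \nu(g(t),|t|)$ for every $t\in[-T,0]$. Since $\nu(g,\cdot)$ is obtained as an infimum over an expanding family of scales $0<\tau'<\tau$, it is non--increasing in $\tau$, hence $\nu(g(-T),T) \geq \nu(g(-T),2T) \geq -\Lambda$ by RF2. Chaining these gives $\nu(g(t),|t|)\geq -\Lambda$.

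For part (\ref{2}), the two--sided bound on $f$ is the Hein--Naber style pointwise estimate on the conjugate heat kernel, adapted to the type--I setting. The upper bound is obtained from Perelman's reduced distance machinery: under RF1 the reduced length $\ell_{(x,t)}(y,s)$ is comparable to $d^2_{g(s)}(x,y)/(t-s)$, up to a constant depending on $C_I$, and Perelman's differential Harnack inequality $f \leq \ell + n/2$ (or the direct upper bound from the heat kernel comparison) yields $f(y,s)\leq H(d^2_{g(s)}(x,y)/|t-s| + 1)$. For the lower bound, I would combine the $\nu$--entropy bound from part (\ref{1}) (which gives $\kappa$--noncollapsing and an on--diagonal upper bound on $u_{(x,t)}$) with the type--I curvature bound to control the heat kernel on balls, and then use that $\int_M u_{(x,t)}(\cdot,s)\,dV_{g(s)}=1$ to force $f$ to grow at least quadratically in the distance; this is the Gaussian--type lower bound carried out in the Ricci flow setting e.g.\ in Bamler's entropy/heat kernel work, which yields $f(y,s)\geq d^2_{g(s)}(x,y)/(H|t-s|) - H$.

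For parts (\ref{3}) and (\ref{4}), both rely on controlling the time derivative of distance along minimizing $g(t)$--geodesics via $\tfrac{d}{dt} d_{g(t)}(x,y) = -\int_\gamma \ric_{g(t)}(\gamma',\gamma')\,ds$. For (\ref{4}), the crude estimate $|\ric|\leq (n-1)|\riem|\leq (n-1)C_I/|t|$ along the whole geodesic gives $\bigl|\tfrac{d}{dt}\log d_{g(t)}(x,y)\bigr|\leq C(n,C_I)/|t|$, and integrating from $t_2$ to $t_1$ yields $d_{g(t_1)}(x,y)\leq (|t_2|/|t_1|)^{C'} d_{g(t_2)}(x,y)$, where I will absorb the dimensional factor into the constant (or rename $C_I$ appropriately). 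For (\ref{3}), the multiplicative bound is too weak and the standard additive one is obtained from Perelman's second variation argument: on the portion of $\gamma$ far from the endpoints one uses $|\riem|\le C_I/|t|$ with the second variation formula to get $\tfrac{d}{dt} d_{g(t)}(x,y)\geq -K(n,C_I)/\sqrt{|t|}$ (so any derivative must be interpreted in the Dini sense at times where $d$ is not smooth, a standard technicality). Integrating this from $s$ to $t$ gives the inequality $d_{g(s)}(x,y)\leq d_{g(t)}(x,y)+K(\sqrt{|s|}-\sqrt{|t|})$. The main technical point across (\ref{3}) is justifying the distance distortion inequality in the barrier/Dini sense; the main technical point in (\ref{2}) is obtaining the Gaussian--type lower bound, where the delicate step is converting the on--diagonal heat kernel control from noncollapsing into a pointwise bound uniform in $s\in[-T,0)$.
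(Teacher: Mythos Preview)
Your proposal is correct and follows essentially the same strategy as the paper: monotonicity of $\nu$ for (\ref{1}), citation of known heat kernel bounds for (\ref{2}), and integration of the Ricci curvature along geodesics for (\ref{3}) and (\ref{4}).

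Two minor points of comparison. For (\ref{2}) the paper does not reprove anything; it simply cites Propositions~2.7 and~2.8 of Mantegazza--M\"uller \cite{MantMul}, which already contain the two--sided Gaussian bound under the type--I and entropy assumptions, so your attribution to Hein--Naber/Bamler is unnecessary here. For (\ref{4}) the paper takes a slightly cleaner route than yours: instead of differentiating $\log d_{g(t)}(x,y)$ (which picks up a dimensional factor from $|\ric(v,v)|\leq (n-1)|\riem|$), it bounds the metric tensor directly via $\bigl|\partial_t \log g(t)(v,v)\bigr| = 2|\ric(v,v)|/g(v,v) \leq 2C_I/|t|$, integrates to get $g(t_1)\leq (|t_2|/|t_1|)^{2C_I} g(t_2)$, takes square roots, and then integrates along a $g(t_2)$--minimizing geodesic. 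This yields the exponent exactly equal to $C_I$ as stated, whereas your approach would give $C(n)C_I$ and require the renaming you mention.
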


 \begin{proof} Property \ref{1} is an immediate application of the monotonicity of the $\nu$ functional. Property \ref{2} is a consequence of Proposition 2.7 and 2.8 of \cite{MantMul}. Property \eqref{3} is a consequence of \cite[Lemma 2.6]{Naber}, see also \cite[Lemma 8.3]{Per_02}. We briefly explain the latter. Let $\gamma$ be a unit speed minimizing geodesic with respect to the metric $g(t)$, between the points $x, y$. Then 
 $$\int_\gamma Ric_{g(t)}(\gamma'(s), \gamma'(s))ds \leq \frac{C}{\sqrt{|t|}}.$$ 
 From the Ricci flow equation we obtain that 
 $$\frac{d}{dt} d_{g(t)}(x,y) \geq - \frac{C}{\sqrt{|t|}},$$
 where $C$ is a positive constant that depends on $n, C_I$. Integrating the above inequality gives the desired estimate.

 Finally we show property \eqref{4}. Fix $v$ a unit vector field then using the Ricci flow equation and the type--I bound we have 
 $$\log\bigg(\dfrac{g(t_1)(v,v)}{g(t_2)(v,v)}\bigg) = \int_{t_2}^{t_1}\frac{\partial}{\partial t} \log \bigg(\dfrac{g(t)(v,v)}{g(t_2)(v,v)}dt\bigg) \leq \int_{t_2}^{t_1}\bigg|\frac{\partial}{\partial t} g(t)\bigg|(v,v)dt=$$
 $$= \int_{t_2}^{t_1}2|Ric_{g(t)}|(v,v)dt\leq \int_{t_2}^{t_1}\frac{2C_I}{|t|}dt = \log\bigg(\frac{|t_2|}{|t_1|}\bigg)^{2C_I},$$
 which shows that $g(t_2)(v,v) \leq \bigg(\frac{|t_2|}{|t_1|}\bigg)^{2C_I}  g(t_1)(v,v)$.
 Choose a minimizing unit speed geodesic from $x$ to $y$ with respect to the metric $g(t_2)$ then for $v = \dot \gamma(t)$ and integrating the previous inequality we get \eqref{4}.
 \end{proof}

\begin{REMARK} We will also use the weaker inequality 
\begin{equation}\label{weak dist dist}
d_{g(s)}(x,y) \leq d_{g(t)}(x,y) + K \sqrt{|s|}, \text{ for any } s<t<0.
\end{equation} The latter is a consequence of \cite[Theorem 1.1]{B-Z II}, see also \cite[Proposition 3.2]{PanGian}. Both inequalities will be referred to as distance distortion estimates. We highlight that for the distance distortion of Lemma \ref{properties}\eqref{3} the type--I bound on the curvature tensor is essential, while for \eqref{weak dist dist}  we only need a type--I bound on the scalar curvature. 
\end{REMARK}

\medskip

We will make use of the $R$--scale distance function, defined in \cite{PanGian2}. For a flow in the class $\mathcal{F}(n,C_I, \Lambda)$ we define for $R >0$
$$D_R(x,y) = \inf \{r \in (0, \sqrt{T}] \mid d_{g(-r^2)}(x,y) < r R \},$$
and we denote by $\tilde B_R (x, r)$ a ball with respect to $D_R.$ So $y \in \tilde B_R (x,r)$ if and only if $D_R(x,y) < r.$ Note that if the set over which we take the infimum is empty then we set $D_R(x,y=+infty.$  From Lemma \ref{properties}\eqref{3} we have the following lemma (see \cite[Proposition 3.2]{PanGian2}).

\begin{LEMMA}\label{properties of D_R} Let $(M, g(t))_{[-T,0]}\in \mathcal{F}(n,C_I, \Lambda)$ then the $R$--scale distance function satisfies the following properties 
\begin{enumerate}
\item\label{almost triangle}
    For $R_1, R_2 \geq R-K,$ \: $ D_R(x,y) \leq \dfrac{\max (R_1, R_2)}{R-K} ( D_{R_1}(x,z) + D_{R_2}(z,y) ), $ for any $x,y,z \in M.$ 
    \item\label{sd1} For $R_1 \leq R_2,$ \:$D_{R_2}(x,y) \leq D_{R_1}(x,y) \leq \dfrac{R_2-K}{R_1-K} D_{R_2}(x,y),$ for any $x,y \in M.$ 
    \item\label{inclusion} If $r_1 < r_2$  then $ \tilde B_R(x,r_1) \subset \tilde B_R(x, r_2).$ 
    \end{enumerate}
\end{LEMMA}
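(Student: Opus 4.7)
The plan is to handle each item in turn, starting from the easiest. Property~\eqref{inclusion} will be immediate from the definition: if $D_R(x,y) < r_1 < r_2$, then $D_R(x,y) < r_2$ and hence $\tilde B_R(x,r_1) \subset \tilde B_R(x,r_2)$.

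For the first half of property~\eqref{sd1}, I will simply note that any $r > D_{R_1}(x,y)$ satisfies $d_{g(-r^2)}(x,y) < r R_1 \leq r R_2$ when $R_1 \leq R_2$, so such an $r$ also meets the defining condition for $D_{R_2}(x,y)$; passing to the infimum yields the inequality. For the second half, given $r > D_{R_2}(x,y)$, my plan is to rescale via
$$r' := \frac{R_2-K}{R_1-K}\,r \;\geq\; r,$$
and invoke the distance distortion from Lemma~\ref{properties}\eqref{3} at the two scales to obtain
$$d_{g(-(r')^2)}(x,y) \leq K(r'-r) + d_{g(-r^2)}(x,y) < K(r'-r) + r R_2.$$
A direct computation with the definition of $r'$ will show $K(r'-r)+rR_2 = r' R_1$, giving $D_{R_1}(x,y)\leq r'$; sending $r \downarrow D_{R_2}(x,y)$ finishes this item.

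For the almost triangle inequality~\eqref{almost triangle}, the strategy is as follows. Given $s_1 > D_{R_1}(x,z)$ and $s_2 > D_{R_2}(z,y)$, I will introduce the auxiliary scale
$$r := \frac{\max(R_1,R_2)}{R-K}\,(s_1+s_2),$$
and apply the metric triangle inequality at time $-r^2$. The hypothesis $R_1, R_2 \geq R-K$ ensures $r \geq s_1+s_2$, so in particular $r \geq s_1,s_2$, and I can transport both partial distance bounds to the common time $-r^2$ via Lemma~\ref{properties}\eqref{3}, yielding
$$d_{g(-r^2)}(x,y) \leq 2Kr - K(s_1+s_2) + s_1 R_1 + s_2 R_2.$$
The final step is to verify that the right-hand side is bounded by $rR$, so that $D_R(x,y)\leq r$, and then to let $s_i \downarrow D_{R_i}$ to obtain the claimed inequality.

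I expect the main obstacle to be precisely this last arithmetic step in property~\eqref{almost triangle}: the prefactor $\frac{\max(R_1,R_2)}{R-K}$ is engineered so that the distortion contribution $2Kr - K(s_1+s_2)$ combines with $s_1 R_1 + s_2 R_2$ to give at most $rR$, and tracking the precise role of the hypothesis $R_1,R_2 \geq R-K$ in this cancellation demands some care. By contrast, the other two properties reduce cleanly to the definition of $D_R$ combined with the distance distortion estimate.
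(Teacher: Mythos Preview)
Your treatment of properties~\eqref{sd1} and~\eqref{inclusion} is essentially fine; the only imprecision is the claim that ``any $r>D_{R_1}(x,y)$ satisfies $d_{g(-r^2)}(x,y)<rR_1$,'' which is not immediate from the infimum definition. It does follow from distance distortion once one assumes $R_1>K$, but you should say so, or simply work directly with an $r_0\in(D_{R_1}(x,y),r)$ lying in the defining set.

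The genuine gap is in property~\eqref{almost triangle}. The final arithmetic step you flag as ``the main obstacle'' in fact fails. With $r=\frac{\max(R_1,R_2)}{R-K}(s_1+s_2)$ and your bound $d_{g(-r^2)}(x,y)<2Kr-K(s_1+s_2)+s_1R_1+s_2R_2$, requiring the right--hand side to be at most $rR$ forces $\max(R_1,R_2)\leq R-K$, which is the \emph{opposite} of the hypothesis $R_1,R_2\geq R-K$. Concretely, take $R_1=R_2=R$ and $s_1=s_2=s$: then $r=\frac{2sR}{R-K}$, and the inequality $2Kr+2s(R-K)\leq rR$ reduces to $K^2\leq 0$. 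The problem is that transporting both pieces independently to time $-r^2$ costs $2Kr$ in distortion, whereas the prefactor $\frac{\max(R_1,R_2)}{R-K}$ only absorbs a single $Kr$.

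The fix is to apply the triangle inequality at the intermediate time $-\max(s_1,s_2)^2$ and transport once more to $-r^2$. Assuming $s_1\leq s_2$, this yields
\[
d_{g(-r^2)}(x,y)\leq K(r-s_2)+K(s_2-s_1)+s_1R_1+s_2R_2=Kr+s_1(R_1-K)+s_2R_2,
\]
and now $s_1(R_1-K)+s_2R_2\leq \max(R_1,R_2)(s_1+s_2)=r(R-K)$ is immediate, giving $d_{g(-r^2)}(x,y)<rR$ as desired. The paper itself does not spell this out, referring instead to \cite[Proposition~3.2]{PanGian2}, but this is the argument that makes the constant $\frac{\max(R_1,R_2)}{R-K}$ work.
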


\noindent
\inlinesubsection{Volume estimates on scale distance balls}
Let $(M, g(t))_{[-2, 0]} \in \mathcal{F}(n,C_I, \Lambda)$ then if $R r < 1$, $r<1$, the non--inflating estimate established in \cite{Zhang}, together with a covering argument, see \cite[Lemma 2.1]{B-Z II},  implies that there exists a constant $v_2:= v_2(n, C_I, \Lambda)$ such that

\begin{equation}
\label{non-inf}
    \vol_{g(-r^2)}(\tilde B_R (x, r)) \leq v_2 r^n R^n.
\end{equation}

\begin{REMARK} If we have a flow $(M, g(t))_{[-2\xi^{-1},0]} \in \mathcal F(n, C_I, \Lambda)$ then a simple rescaling argument implies that the estimate \eqref{non-inf} holds for any $R>0$ and $r \leq 1$, provided that $\xi \leq \xi (R).$  
We will use both versions of the non--inflating estimate.
    
\end{REMARK}

We recall now the non--collapsing theorem of \cite{Per_02}, improved in \cite{Top}. On a closed Riemannian manifold let $B(p,r)$ be a ball where $\text{Scal} \leq r^{-2}$. Then
\begin{equation}
\label{non-coll I}
\vol(B(p,r)) s^{-n} \geq C(n) e^{\nu(g,r^2)},
\end{equation}
for every $s\in (0, r].$ 

In particular, on a closed Ricci flow $(M, g(t))_{[-2,0]} \in \mathcal{F}(n,C_I, \Lambda)$ let $R > R(C_I)$, $r\leq 1$, then in $\tilde B_{\frac{1}{\sqrt{C_I}}}(x,r) \subset \tilde B_R (x,r)$ we have that $|Rm|_{g(-r^2)} \leq \frac{1}{(r C_I^{-1/2})^2}$. Thus from \eqref{non-coll I} there exists $v_1:= v_1(n, C_I, \Lambda)$ such that 

\begin{equation}
\label{non-coll}
    v_1 r^n \leq \vol _{g(-r^2)}(\tilde B_R(x, r)). 
\end{equation}

We also recall the following volume comparison which is valid on any closed Ricci flow $(M, g(t))_{[-T, 0]}$. From the differential inequality for the scalar curvature, 
$$\frac{\partial}{\partial t}  \text{Scal}_{g(t)} \geq \Delta_{g(t)} \text{Scal}_{g(t)} + \frac{2}{n}\text{Scal}^2_{g(t)},$$ a standard application of the maximum principle gives that 
$\text{Scal}_{g(t)}(\cdot, t) > -\frac{n}{2(T +t)}.$ 
In particular if $T\geq2$ then for $|t|\leq 1 $ we get that 
$$\text{Scal}_{g(t)}(\cdot, t) > -\frac{n}{2}.$$
Thus from the evolution equation of the volume $V(t) = \vol_{g(t)}(A)$,  of a set $A \subset M$ 
$$\frac{d}{dt} log V(t) = - \text{Scal}_{g(t)},$$ 
we get the following volume comparison

\begin{equation}
    \label{volume comp}
    V(t_2) \leq C(n) V(t_1),
\end{equation}
for $t_1<t_2 \leq0 $, $|t_1| \leq 1$, where $C(n)$ denotes a constant that depends on the dimension $n$.

\medskip

\noindent
\inlinesubsection{Almost self--similar Ricci flows} We start by recalling some facts about gradient shrinking Ricci solitons. We say that the triplet $(M, g_0, f)$, where $(M, g_0)$ is a Riemannian manifold and $M$ is complete, is a gradient shrinking Ricci soliton at scale $\tau$ if it satisfies the PDE
$$Ric_{g_0} + \nabla^2 f = \frac{g_0}{2\tau}.  $$ 

The bounds from \cite[Lemma 2.1]{HassMul} together with the volume growth of \cite{Car} imply that 
$$\int_M (4\pi \tau)^{-n/2}e^{-f}dV_{g_{0}} < \infty, $$
thus we may always normalize so that the above integral is 1. We will then call $(M, g_0, f)$ a normalized gradient shrinking soliton. The entropy of a normalized gradient shrinking soliton $(M, g_0, f)$ is defined as $\mu(g_0) =  \mathcal{W}(g_0, f, \tau )$, where $\mathcal{W}$ is Perelman's entropy functional. The latter is well--defined even if $M$ is non--compact due to the estimates of \cite{HassMul} and \cite{Car} and it depends solely on the metric $g_0.$

Let $(M, g_0, f)$ be a normalized gradient shrinking soliton at scale 1 and $\phi_t: M \to M$ the family of diffeomorphisms defined by the equation $\dfrac{d}{dt}\phi_t = \nabla f\circ \phi_t.$ Then $(M, g_0, f)$ induces the Ricci flow $(M, g(t))_{(-\infty,0)}$, where $g(t) = |t|\phi_t^* g_0$. For each $t<0$ and $f_t = f\circ \phi_t,$ $(M, g(t), f_t)$ is a normalized gradient shrinking soliton at scale $|t|.$  Note that $\mu(g(t)) = \mu(g_0)$ due to the scale invariance of the functional.

\begin{DEFINITION}Let $(M, g(t))_{(-\infty,0)}$ be a Ricci flow induced by a gradient shrinking soliton. The spine $\mathcal{S}$ of the flow is the collection of conjugate heat flows $(\nu_{f,t})$ such that $(M, g(t), f(\cdot,t))$ is a normalized gradient shrinking soliton at scale $|t|$.  The point spine is $$S_{point} =\bigg \{ x \in M \mid \exists \: \nu_f \in \mathcal S \text{ and } f(\cdot, t) \text{ attains a minimum at } x \text{ for every } t<0 \bigg\}.$$ 
\end{DEFINITION}

\noindent
Thus the point spine consists of the minimum points of the soliton maps, hence $S_{point}\neq \emptyset$ from the work of \cite{HassMul}. 

We will call $(M, g(t), p)_{(-\infty,0)}$ a $k$--self--similar flow if it is induced by a gradient shrinking soliton, $p \in S_{\text{point}}$ and is isometric to $(M'\times \mathbb R^k, g'(t)\oplus g_{\mathbb R^k})_{(-\infty,0)}$ for each $t<0,$ where $(M',g'(t))_{(-\infty,0)}$ is a flow induced by a gradient shrinking soliton. Note that if the flow is $k$--self--similar then there exists $l \geq k$ such that 
$S_{point} = K \times \mathbb R^l, $ $K \neq \emptyset$ and 
$$\text{diam}_{g'(t)} (K) \leq A(n) \sqrt{|t|}.$$ The latter inequality follows from \cite{HassMul}. See also \cite[Theorem 4.1]{PanGian} for further properties of $k$--self--similar flows. 

\begin{Ex}If the flow is the shrinking sphere $\mathbb S^n$ then it is $0$--selfsimilar (with respect to any point) and $S_{point} = \mathbb S^n$ since the potential map $f$ is a constant. Similarly the shrinking cylinder $\mathbb R ^k \times \mathbb S^{n-k}$ is $k$--self--similar and we have that $S_{point } = \mathbb R ^k \times \mathbb  S^{n-k}$. On the other hand in the shrinking sphere the spine $\mathcal{S}$ consists of only one element while in the shrinking cylinder $\mathcal{S}$ is $k$--dimensional. 
\end{Ex}

\begin{REMARK}The above examples show that the definition of the point spine is in agreement with the fact that in the shrinking sphere or the shrinking cylinder every point of the flow is a singular point. This is unlike the case of minimal cones or shrinkers on a mean curvature flow where the point spine is defined to be a $k$--dimensional subspace. 
\end{REMARK}

\noindent 
Self--similar flows arise as pointed blow--up limits of type--I compact Ricci flows, see \cite{Naber}, \cite{Enders}. We refer the reader also to the work of \cite{Bamler} where blow--up limits of Ricci flows are studied with no type--I bound on the curvature tensor.

\begin{DEFINITION} Let $(M, g(t), p)_{[-2\delta^{-1}r^2, 0]}$ a pointed Ricci flow. Then we say that it is $(k, \delta)$--self--similar at scale $r$ if for $g_r(t) = r^{-2}g(r^2t)$ there exists a $k$--self--similar flow $(\tilde M, \tilde g(t), \tilde p)_{(-\infty, 0)}$ and $F: B_{\tilde g(-1)}(\tilde p, \delta^{-1}) \to M$ a diffeomorphism to its image, with $F(\tilde p)=p$ ($\tilde p \in S_{point}$) such that in $B_{\tilde g (-1)}(\tilde p , \delta^{-1})\times [-\delta^{-1}, -\delta]$ we have
$$||F^\star g_r - \tilde g||_{C^l} < \delta, $$
for any $l < \delta^{-1}$ and 
$$|d_{g_r(t)}(p, F(x)) - d_{\tilde g (t)}(\tilde p, x)| < \delta.$$
We define the approximate singular set $\mathcal{L}_{p , r} = F(B_{\tilde g(-1)}(\tilde p , \delta^{-1})\cap S_{\text{point}})$ and say that the flow is $(k, \delta)$--self--similar around $p$ at scale $r$ with respect to $\mathcal{L}_{p, r}.$ Moreover, $(\tilde M, \tilde g(t)), \tilde p)_{(-\infty,0)}$ will be called a model flow for the flow $(M, g(t), p)_{[-2\delta^{-1},0]}.$ 
    
\end{DEFINITION}

We close this section by mentioning the necessary compactness results. 
The convergence is in the usual Cheeger--Gromov--Hamilton sense for pointed flows. Furthermore, if a sequence of flows $(M_j, g_j(t), p_j)_{t \in I_j}$ converges to the flow $  (M_\infty, g(t)_\infty, p_\infty)_{t \in I_\infty}$ then we say that the conjugate heat flows $\nu_{f_j,t}$ converge to a conjugate heat flow $\nu_{f,t}$ if the associated densities of the measures converge locally in $C^\infty(M_\infty\times I_\infty).$

\begin{theorem}\textup{(compactness and entropy continuity)} \label{compactness}
\begin{enumerate}
\item\label{I} Let $(M_j, g_j(t), p_j)_{t \in I_j} \in \mathcal{F}(n,C_I, \Lambda)$, a sequence of pointed flows. Then, after passing to a subsequence, it converges to a smooth, complete flow $(M_\infty, g_\infty(t), p_\infty)_{t \in I_ \infty}$ that satisfies $RF1, RF2.$ 
\item\label{II} Let $(M_j, g_j(t), p_j)_{[-2\delta_j^{-1}, 0]} \in \mathcal{F}(n,C_I, \Lambda)$ be a sequence of $(k, \delta_j)$--self--similar flows at scale 1 then as $\delta_j \to 0$ and up to a subsequence there exists a limit flow $(M_\infty, g_\infty(t), p_\infty)_{(-\infty,0)}$ that is $k$--self--similar and for any $t<0$ we have that $\mathcal W_{p_j}(|t|) \to \mu(g_\infty(-1))$, for all $t<0.$ 

\item\label{III}Let $(M_j, g_j(t), p_j)_{t \in I_j} \in \mathcal{F}(n,C_I, \Lambda)$ with a limit flow $(M_\infty, g_\infty(t), p_\infty)_{(-\infty,0)}$ induced by a gradient shrinking soliton and assume that for the conjugate heat flows $\nu_{(p_j,0)}$ based at $(p_j, 0)$ we have that $\nu_{(p_j,0)} \to \nu_f \in \mathcal S$. Then $\mathcal W_{p_j}(|t|) \to \mu(g_\infty(-1))$, for all $t<0.$
\end{enumerate}
\end{theorem}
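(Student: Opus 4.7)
\textbf{Part (\ref{I}).} The plan is to apply Hamilton's compactness theorem for pointed Ricci flows. The type--I hypothesis RF1 supplies uniform curvature bounds $|\riem|_{g_j(t)} \leq C_I/|t|$ on every compact subinterval of $(-\infty,0)$, while RF2 combined with Perelman's $\kappa$--non--collapsing (equivalently the explicit volume lower bound \eqref{non-coll}) yields a uniform positive injectivity radius at the basepoints $p_j$. Hamilton's theorem then produces, after passing to a subsequence, a pointed $C^\infty_{\mathrm{loc}}$ Cheeger--Gromov--Hamilton limit $(M_\infty, g_\infty(t), p_\infty)_{t\in I_\infty}$ that is smooth and complete. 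RF1 passes to the limit by smooth convergence, and RF2 for the limit follows from the monotonicity of $\nu$ together with its lower semicontinuity under pointed smooth convergence.

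\textbf{Part (\ref{III}).} Writing $u_j=(4\pi|t|)^{-n/2}e^{-f_j}$ for the densities of $\nu_{(p_j,0)}$, the strategy is to pass to the limit in the $\mathcal{W}$--integral. Locally $u_j\to u_\infty$ smoothly by hypothesis. Globally, I would use the Gaussian upper bound on $f_j$ from Lemma \ref{properties}(\ref{2}), together with the non--inflating estimate \eqref{non-inf}, to derive a uniform exponential decay of the integrand $(|t|(\mathrm{Scal}_{g_j(t)}+|\nabla f_j|^2)+f_j-n)\,u_j$ outside a large parabolic neighborhood of $p_j$. This uniform integrability delivers
\begin{equation*}
\mathcal{W}_{p_j}(|t|)\;\longrightarrow\;\mathcal{W}(g_\infty(t),f_\infty(\cdot,t),|t|).
\end{equation*}
Since $\nu_f\in\mathcal{S}$, by definition $(M_\infty,g_\infty(t),f_\infty(\cdot,t))$ is a normalized gradient shrinking soliton at scale $|t|$, so the right--hand side equals $\mu(g_\infty(t))=\mu(g_\infty(-1))$ by the scale invariance of $\mu$ under the soliton flow.

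\textbf{Part (\ref{II}).} The idea is to upgrade part (\ref{I}) to $k$--self--similarity of the limit and then invoke part (\ref{III}). Part (\ref{I}) gives a subsequential limit $(M_\infty, g_\infty, p_\infty)_{(-\infty,0)}$, since $[-2\delta_j^{-1},0]$ exhausts $(-\infty,0)$. Applying part (\ref{I}) separately to the models $(\tilde M_j,\tilde g_j,\tilde p_j)$, which inherit RF1 and RF2 from the $C^{l_j}$--closeness, I would extract a limit model; the gradient shrinking soliton equation, the splitting of an $\mathbb{R}^k$--factor, and the property $\tilde p_j\in S_{\mathrm{point}}$ are all preserved under $C^\infty_{\mathrm{loc}}$ convergence, so the limit model is $k$--self--similar. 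The maps $F_j$ subsequentially converge to an isometric identification, making $(M_\infty,g_\infty,p_\infty)$ itself $k$--self--similar. It remains to verify the hypothesis of part (\ref{III}) that $\nu_{(p_j,0)}$ subsequentially converges to some $\nu_f\in\mathcal{S}$. Subsequential $C^\infty_{\mathrm{loc}}$ convergence of $u_j$ to a limit $u_\infty$ satisfying the conjugate heat equation on $(M_\infty,g_\infty)$ follows from the Gaussian bounds of Lemma \ref{properties}(\ref{2}) and standard parabolic regularity. That the limit lies in the spine is extracted from the integrated form of Perelman's monotonicity formula \eqref{monotonicity},
\begin{equation*}
\mathcal{W}_{p_j}(|t_1|)-\mathcal{W}_{p_j}(|t_2|)\;=\;2\int_{-|t_2|}^{-|t_1|}|t|\int_{M_j}\Bigl|\ric_{g_j(t)}+\nabla^2 f_j-\tfrac{1}{2|t|}g_j(t)\Bigr|^2 u_j\,dV_{g_j(t)}\,dt,
\end{equation*}
whose left--hand side is bounded in absolute value by $\Lambda$ via RF2 and \eqref{PH in}. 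The $(k,\delta_j)$--self--similarity forces $\mathcal{W}_{p_j}(|t|)$ to become arbitrarily close to a common constant on every fixed parabolic window as $\delta_j\to 0$, so in the limit the soliton defect integrand vanishes, giving $\ric_{g_\infty}+\nabla^2 f_\infty=g_\infty/(2|t|)$. Hence $\nu_{(p_\infty,0)}\in\mathcal{S}$, and part (\ref{III}) concludes $\mathcal{W}_{p_j}(|t|)\to\mu(g_\infty(-1))$.

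\textbf{Main obstacle.} The delicate point is identifying the limit conjugate heat kernel in part (\ref{II}) with an element of the spine: this requires combining Perelman's monotonicity with the quantitative almost--self--similarity in the spirit of Naber's tangent flow argument, and promoting the integrated vanishing of the soliton defect to pointwise vanishing on $\mathrm{supp}(u_\infty)$. The uniform--integrability step in part (\ref{III}) is comparatively routine given the two--sided Gaussian estimates in Lemma \ref{properties}(\ref{2}).
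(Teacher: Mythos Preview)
Your Parts (\ref{I}) and (\ref{III}) are essentially the paper's argument: Hamilton's compactness with the non--collapsing from RF2, and the uniform integrability argument for the $\mathcal W$--integral from \cite{MantMul} (equivalently \cite[Proposition 4.2]{PanGian}) driven by the Gaussian bounds of Lemma \ref{properties}(\ref{2}).

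In Part (\ref{II}) there is a real gap. Your assertion that ``the $(k,\delta_j)$--self--similarity forces $\mathcal W_{p_j}(|t|)$ to become arbitrarily close to a common constant on every fixed parabolic window'' is not supported by the definition: $(k,\delta_j)$--self--similarity is a $C^l$ closeness statement about the \emph{metrics} on the region $B(\tilde p_j,\delta_j^{-1})\times[-\delta_j^{-1},-\delta_j]$, whereas $\mathcal W_{p_j}(|t|)$ is built from the conjugate heat kernel $u_j$, a global object determined by the geometry all the way to $t=0$, which lies \emph{outside} the region of closeness. Naber's tangent--flow argument does force the entropy drop to vanish, but only because one is rescaling a \emph{single} flow and monotonicity of $\tau\mapsto\mathcal W_p(\tau)$ forces convergence to the density $\Theta(p)$; here the $(M_j,g_j)$ are a priori unrelated flows, so that mechanism is unavailable. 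As written, the step asserts precisely the entropy continuity you are trying to prove.

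The paper proceeds differently at exactly this point. You correctly extract $p_\infty\in S_{\mathrm{point}}$ (since $p_j=F_j(\tilde p_j)$ with $\tilde p_j\in S_{\mathrm{point}}$ and the $F_j$ converge to an isometry), but you do not use it. The paper uses it directly: given $p_\infty\in S_{\mathrm{point}}$, \cite[Proposition 4.3]{PanGian} shows that the conjugate heat kernels $\nu_{(p_j,0)}$ converge to an element of the spine $\mathcal S$. This is a structural fact about solitons---the conjugate heat flow selected by a spine point is a soliton measure---and not a consequence of the monotonicity formula applied along the sequence. Once $\nu_{f_\infty}\in\mathcal S$ is obtained this way, Part (\ref{III}) finishes the proof exactly as you indicate.
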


Property \eqref{1} is the standard Hamilton compactness theorem, \cite{Ham1}, on Ricci flows in the class $\mathcal{F}(n, C_I, \Lambda).$ We briefly explain the entropy convergence of \eqref{II}, \eqref{III}. In \eqref{II} we have that the points $p_j$ converge to a point in $S_{\text{point}}$ thus from \cite[Proposition 4.3]{PanGian} we have that the conjugate heat flows $\nu_{(p_j,0)}$, based at $(p_j,0)$, converge to an element of $\mathcal S$. The latter implies the entropy convergence, due to \cite{MantMul} or \cite[Proposition 4.2]{PanGian}. On the other hand in \eqref{III} we already know the convergence of the conjugate heat flows based at $(p_j,0)$ thus the entropy convergence follows from \cite{MantMul} or \cite[Proposition 4.2]{PanGian}. Note that in this case we do not know if the limit point of $p_j$ belongs in the point spine of the limit soliton.

\section{Quantitative splitting and regularity} \label{section3}

In this section we define the entropy pinching set and prove a quantitative splitting result, Theorem \ref{q.split}. Furthermore, we show in Theorem \ref{eta} that an application of Perelman's pseudolocality theorem on almost self--similar flows of maximal symmetry gives us an instance of an $\eta$--regularity result.

\medskip

\noindent
\inlinesubsection{Entropy pinching set and quantitative splitting} Following the ideas of \cite{CJN} and \cite{JiangNaber} on manifolds with lower Ricci bounds and manifolds with two sided Ricci bounds respectively we give the following: 
\begin{DEFINITION} If $(M, g(t))_{[-2\xi^{-1},0]} \in \mathcal{F}(n,C_I, \Lambda)$ then we define the entropy pinching set at scale $r$ around the point $p$ as
$$\mathcal{P}_{r, \xi, R}(p) = \{y \in \tilde B_{R/2}(p, r) \mid \mathcal W_y(\xi r^2) - \overbar W < \xi \},$$
where $\overbar W$ is a fixed non--positive constant.    
\end{DEFINITION} 

\begin{LEMMA}\label{s-s/e-d} Let $(M, g(t), p)_{[-2\xi^{-1}, 0]} \in \mathcal{F}(n,C_I, \Lambda)$ and $y \in \mathcal P_{r, \xi, R}(p), $ where the pinching set is taken with respect to  $\overbar W \leq \displaystyle\inf_{\tilde B_{2R}(p,1)} \mathcal{W}_y(\xi^{-1}).$ Then for any $\delta'>0$, $R \geq R (n, C_I, \Lambda) $ and $\xi \leq \xi(n, C_I, \Lambda, \delta')$ there exists a point $x \in \tilde B_{R}(p, 1)$ such that the pointed flow $(M, g(t), x)_{[-2\delta'^{-2}r^2,0]}$ is $(0, \delta'^2)$--self--similar at scale $r$ and  $d_{g(t)}(y,x) \leq 2 D (n, C_I, \Lambda) \sqrt{|t|}$ for any $t \in [-\delta'^{-2}r^2, -\delta'^2 r^2].$ 

Furthermore, if $\gamma \in (0,1)$ and $\delta >0$ then for $\delta' \leq \delta' (R, \gamma, \delta )$ we have that for any $z \in \mathcal{L}_{x,r}\cap \tilde B_{2R}(p,r)$ 
$$\mathcal W_z(\gamma r^2 \delta) - \overbar W < \delta$$ 
and as a consequence $\mathcal W _z(\gamma r^2 \delta)-\mathcal W_z(\delta^{-1})< \delta.$

\end{LEMMA}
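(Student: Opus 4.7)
My plan is to establish both assertions by a compactness-and-contradiction argument, invoking Perelman's monotonicity formula \eqref{monotonicity}, the compactness Theorem \ref{compactness}, and the quadratic estimates on the potential of the conjugate heat kernel from Lemma \ref{properties}\eqref{2}. Throughout, I parabolically rescale so that $r=1$; the scale-invariance of the entropy and of the condition RF1 keeps the rescaled flows in $\mathcal{F}(n,C_I,\Lambda)$ on a time interval that contains $[-2\xi^{-1},0]$.

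For the first assertion, suppose for contradiction that there exist $\delta'>0$, a sequence $\xi_j\to 0$, flows $(M_j,g_j(t),p_j)_{[-2\xi_j^{-1},0]}\in\mathcal{F}(n,C_I,\Lambda)$, and points $y_j\in\mathcal{P}_{1,\xi_j,R}(p_j)$ for which no candidate $x\in\tilde B_R(p_j,1)$ satisfies the conclusion. By Theorem \ref{compactness}\eqref{I}, up to a subsequence the pointed flows $(M_j,g_j(t),y_j)$ converge to a smooth complete limit $(M_\infty,g_\infty(t),y_\infty)_{(-\infty,0)}$ in $\mathcal{F}(n,C_I,\Lambda)$. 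The pinching $\mathcal{W}_{y_j}(\xi_j)-\overbar W_j<\xi_j$, combined with $\overbar W_j\leq\mathcal{W}_{y_j}(\xi_j^{-1})$ and the monotonicity of the pointed entropy, gives
\begin{equation*}
0\leq \mathcal{W}_{y_j}(\xi_j)-\mathcal{W}_{y_j}(\xi_j^{-1})<\xi_j\to 0.
\end{equation*}
By the rigidity case of \eqref{monotonicity}, the $L^2$-norm of the soliton residual for the conjugate heat kernel at $(y_j,0)$ vanishes on compact parabolic windows in the limit, so the limit flow is induced by a normalized gradient shrinking soliton with $\nu_{(y_\infty,0)}\in\mathcal{S}$. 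Lemma \ref{properties}\eqref{2} applied to the resulting potential $f_\infty(\cdot,t)$ yields $f_\infty(y_\infty,t)\leq H$ and $f_\infty(z,t)\geq d_{g_\infty(t)}^2(y_\infty,z)/(H|t|)-H$, so its minimum $x_\infty\in S_{point}$ exists and satisfies $d_{g_\infty(t)}(y_\infty,x_\infty)\leq D(n,C_I,\Lambda)\sqrt{|t|}$. Pulling $x_\infty$ back through the Cheeger--Gromov--Hamilton diffeomorphisms supplies, for $j$ large, points $x_j\in M_j$ around which the flow is $(0,\delta'^2)$-self-similar at scale $1$ with $d_{g_j(t)}(y_j,x_j)\leq 2D\sqrt{|t|}$ on $[-\delta'^{-2},-\delta'^2]$. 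The containment $x_j\in\tilde B_R(p_j,1)$ then follows from the triangle inequality of Lemma \ref{properties of D_R}\eqref{almost triangle} together with the distance distortion of Lemma \ref{properties}\eqref{3}, provided $R\geq R(n,C_I,\Lambda)$ is taken large enough to absorb the additive constants $K$ and $D$. This contradicts the standing assumption.

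For the second assertion, suppose there exist $\gamma\in(0,1)$, $\delta>0$, a sequence $\delta_j'\to 0$, and points $z_j\in\mathcal{L}_{x_j,1}\cap\tilde B_{2R}(p_j,1)$ with $\mathcal{W}_{z_j}(\gamma\delta)-\overbar W_j\geq\delta$. Theorem \ref{compactness}\eqref{II} yields a $0$-self-similar limit of $(M_j,g_j(t),x_j)$ with $\mathcal{W}_{x_j}(|t|)\to\mu(g_\infty(-1))$. Writing $z_j=F_j(\tilde z_j)$ for $\tilde z_j\in S_{point}$ and using the containment $z_j\in\tilde B_{2R}(p_j,1)$ to bound $d_{\tilde g_j(-1)}(\tilde x_j,\tilde z_j)$ uniformly, after a subsequence $z_j\to z_\infty\in S_{point}$ of the limit soliton. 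The convergence of $\nu_{(z_j,0)}$ to an element of $\mathcal{S}$ (Proposition 4.3 of \cite{PanGian}) and Theorem \ref{compactness}\eqref{III} then give $\mathcal{W}_{z_j}(\gamma\delta)\to\mu(g_\infty(-1))$, while the squeeze $\overbar W_j\leq\mathcal{W}_{y_j}(\xi_j^{-1})\leq\mathcal{W}_{y_j}(\xi_j)<\overbar W_j+\xi_j$ together with the entropy convergence for $y_j$ forces $\overbar W_j\to\mu(g_\infty(-1))$, contradicting $\mathcal{W}_{z_j}(\gamma\delta)-\overbar W_j\geq\delta$. The consequential inequality $\mathcal{W}_z(\gamma r^2\delta)-\mathcal{W}_z(\delta^{-1})<\delta$ is then automatic: the inclusion $z\in\tilde B_{2R}(p,1)$ and the defining property of $\overbar W$ give $\overbar W\leq\mathcal{W}_z(\xi^{-1})$, and taking $\xi\leq\delta$ monotonicity yields $\overbar W\leq\mathcal{W}_z(\delta^{-1})$, whence $\mathcal{W}_z(\gamma r^2\delta)-\mathcal{W}_z(\delta^{-1})\leq \mathcal{W}_z(\gamma r^2\delta)-\overbar W<\delta$.

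The main obstacle I foresee is the nested bookkeeping of the thresholds $R(n,C_I,\Lambda)$, $\xi(n,C_I,\Lambda,\delta')$, and $\delta'(R,\gamma,\delta)$, and in particular ensuring that the distance distortion of Lemma \ref{properties}\eqref{3} together with the point-spine estimate $d\leq D\sqrt{|t|}$ does not push $x_j$ outside $\tilde B_R(p_j,1)$. A secondary technical point is the verification of the pre-limit $(0,\delta'^2)$-self-similarity on the large parabolic window $[-\delta'^{-2},-\delta'^2]\times B_{\tilde g(-1)}(\tilde x,\delta'^{-2})$, which requires smooth convergence supplied by Hamilton's theorem together with Proposition 4.3 of \cite{PanGian} to identify the limit point with an element of the point spine.
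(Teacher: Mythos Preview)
Your proposal is correct and follows essentially the same approach as the paper: for the first assertion the paper simply cites \cite[Corollary 4.2]{PanGian} (whose proof is precisely the compactness-and-monotonicity argument you sketch), and for the second assertion the paper runs the same contradiction with $\delta'_j\to 0$, using Theorem \ref{compactness}\eqref{II} for the $z_j$-sequence and Theorem \ref{compactness}\eqref{III} (via the soliton-residual vanishing \eqref{ensuring spine}) for the $y_j$-sequence, then the same squeeze to pin down $\overbar W_j\to\mu(g_\infty(-1))$. The only cosmetic difference is that the paper centers the limit at $z_j$ directly rather than at $x_j$, and it makes explicit that $\xi_j\to 0$ (forced by $\xi\leq\xi(\delta'_j)$), which you use implicitly in your squeeze and in the entropy convergence for $y_j$.
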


\begin{proof} The first part of the lemma comes from \cite[Corollary 4.2]{PanGian}, since for a flow in the class  $\mathcal{F}(n,C_I, \Lambda)$ we have conjugate heat kernel bounds, Lemma \ref{properties}\eqref{2}, needed for the proof. For the second part of the lemma we argue in a similar way. 

Assume that $\delta'_j \to 0$ and let a sequence of flows $(M_j, g_j(t),p_j)_{[-2\delta'^{-2}_j,0]}$, $y_j \in \mathcal P_{r, \xi_j, R}(p_j)$, $x_j \in \tilde B_R (p_j, r)$ such that $(M_j, g_j(t), x_j)_{[-2\delta'^{-2}_j,0]} \in \mathcal{F}(n,C_I, \Lambda)$, $(M_j, g_j(t), x_j)_{[-2\delta'^{-2}_j,0]}$ is $(0, \delta'^{2}_j)$--self--similar at scale r and there are  points $z_j \in \mathcal{L}_{x_j, r}\cap \tilde B_{2R}(p_j, r)$ such that $\mathcal W_{z_j}(\gamma r^2 \delta) - \overbar W_j > \delta.$ 

From Theorem \ref{compactness}\eqref{II} we have that $(M_j, g_j(t), z_j)_{[-2\delta'^{-2}_j,0]}$ converges to a $0$--self--similar flow given by $(M_\infty, g_\infty(t), z_\infty)_{(-\infty,0)}$ and $\mathcal W_{z_j}(|t|) \to \mu(g_\infty(-1))$ for all $t<0.$ 

From Theorem \ref{compactness} \eqref{I} we have that $(M_j, g_j(t), y_j)_{[-2\xi_j^{-1},0]}$ (note that $\xi_j \to 0)$ has a limit flow and by taking $\delta'_j \leq \delta'(R)$ we ensure that this limit is $(M_\infty, g_\infty(t), y_\infty)_{(-\infty,0)}$, however we do not know whether $y_\infty \in S_{point}$ to ensure the entropy convergence. To get the latter we argue as follows.

Since $y_j \in P_{r, \xi_j, R}(p_j)$ we have that 
\begin{equation}
\label{ensuring spine}
\xi_j > \mathcal W_{y_j}(\xi_j r^2) - \mathcal W_{y_j}(\xi_j^{-1}) = \int_{-\xi_j^{-1}}^{-\xi_j r^2}2|t| \int_{M_j} \bigg | Ric_{g_j} + \nabla^2_{g_j} f_j - \dfrac{g_j}{2|t|}  \bigg|^2 u_j dV_{g_j(t)}, 
\end{equation}
where we used the monotonicity formula \eqref{monotonicity} and $u_j$ is the conjugate heat kernel based at $(y_j, 0).$ Since $\xi_j \to 0$ and from the conjugate heat kernel bounds, Lemma \ref{properties}\eqref{2}, we conclude that the conjugate heat flows based at $(y_j,0)$ converge to a conjugate heat flow $ \nu_f$ and from \eqref{ensuring spine} we have that $\nu_f \in \mathcal{S}$. Thus from Theorem \ref{compactness}\eqref{III} we get that $\mathcal W_{y_j}(|t|) \to \mu(g_\infty(-1))$ as well. 

In conclusion we have that $\mathcal W_{y_j}(|t|)-\mathcal W_{z_j}(|t|) \to 0$. Also, $- \Lambda \leq \overbar W_j \leq 0$ thus $\overbar W_j \to  w \leq 0$ and from the monotonicity of the pointed entropy we have that $\overbar W_j \leq \mathcal W_{y_j}(\gamma r^2 \delta) \leq \mathcal W_{y_j}(\delta'_j r^2) \leq \mathcal W_{y_j}(\xi_j r^2) \leq \xi_j + \overbar W_j  $, provided $\delta'_j < \gamma \delta$. Thus $w = \mu(g_\infty(-1))$ and $\mathcal W_{z_j}(\gamma r^2 \delta)- \overbar W_j < \delta/4$ contradicting our initial assumption. Hence if $\delta' \leq \delta'(R, \gamma, \delta)$ we have that $\mathcal W_{z}(\gamma r^2 \delta) - \overbar W < \delta$ and since $\xi < \delta$ and $\overbar W \leq \displaystyle\inf_{\tilde B_{2R}(p,1) }\mathcal W_y (\xi^{-1})$ we also conclude that $\mathcal W_z(\gamma r^2 \delta) - \mathcal W_z(\delta^{-1})< \delta.$
\end{proof}

Let $A \subset M$ then the tubular neighborhood of $A$ at scale $r$ is defined to be the set $\tilde B_{R, r}(A)=\bigcup_{x \in A} \tilde B_R(x, r).$ If we have a variable radius function $r_x$ for $x \in A$ we define the tubular neighborhood of $A$ with variable radius to be $\tilde B_{R, r_x}(A) = \bigcup_{x \in A} \tilde B_{R}(x, r_x).$
\begin{theorem}\textup{(quantitative splitting at scale 1)}\label{q.split} Let $(M, g(t),p)_{[-2\xi^{-1},0]} \in \mathcal{F}(n,C_I, \Lambda)$ and $\epsilon, \delta >0.$ If $R \geq R(n, C_I, \Lambda)$, $\gamma \leq \gamma(n, C_I, \Lambda, R , \epsilon)$, $\xi \leq \xi(n, C_I, \Lambda, R, \delta, \gamma, \epsilon)$ and 
$$\vol_{g(-\gamma^2)}(\tilde B_{R, \gamma}(\mathcal P_{1, \xi, R}(p))\geq \epsilon \gamma ^{n-k},$$
with $\overbar W \leq \displaystyle\inf_{\tilde B_{2R}(p,1)} \mathcal{W}_{y}(\xi^{-1}),$ then there exists $q \in \tilde B_R(p,1)$ such that $(M, g(t), q)_{[- 2\delta^{-2},0]}$ is $(k , \delta^2)$--self--similar at scale 1. 
\end{theorem}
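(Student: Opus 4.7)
I would argue by contradiction, using Theorem \ref{compactness} and the content estimate encoded in the volume hypothesis. Assume the conclusion fails: for fixed $R, \epsilon, \delta$ there exist sequences $\gamma_j \to 0$, $\xi_j \to 0$ and pointed flows $(M_j, g_j(t), p_j)_{[-2\xi_j^{-1}, 0]} \in \mathcal F(n, C_I, \Lambda)$ for which the volume hypothesis holds but no $q \in \tilde B_R(p_j, 1)$ makes $(M_j, g_j(t), q)$ a $(k, \delta^2)$-self-similar flow at scale $1$. By Theorem \ref{compactness}\eqref{I}, a subsequence converges to a limit $(M_\infty, g_\infty(t), p_\infty)_{(-\infty, 0)}$. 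The volume hypothesis forces $\mathcal P_j := \mathcal P_{1, \xi_j, R}(p_j)$ to be non-empty, so choosing $y_j \in \mathcal P_j$ and applying Lemma \ref{s-s/e-d}, I obtain points $x_j \in \tilde B_R(p_j, 1)$ such that $(M_j, g_j(t), x_j)$ is $(0, \delta_j'^2)$-self-similar at scale $1$ with $\delta_j' \to 0$. Passing to the limit, $(M_\infty, g_\infty(t))$ is induced by a gradient shrinking soliton whose point spine has the form $S^{\text{point}}_\infty = K \times \mathbb R^{l_\infty}$ with $K$ compact of bounded diameter and $l_\infty \geq 0$.

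\textbf{Pinching limits lie in $S^{\text{point}}_\infty$.} The next step is to show that a Hausdorff limit $y_\infty$ of points $y_j \in \mathcal P_j$ belongs to $S^{\text{point}}_\infty$. For such $y_j$ the pinching condition and the monotonicity formula yield
$$
\xi_j > \mathcal W_{y_j}(\xi_j) - \mathcal W_{y_j}(\xi_j^{-1}) = \int_{-\xi_j^{-1}}^{-\xi_j} 2|t| \int_{M_j} \bigl| \ric_{g_j} + \nabla^2 f_j - \tfrac{g_j}{2|t|} \bigr|^2 u_j \, dV_{g_j(t)} \, dt,
$$
where $u_j = (4\pi|t|)^{-n/2} e^{-f_j}$ is the conjugate heat kernel based at $(y_j, 0)$. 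The conjugate heat kernel bounds of Lemma \ref{properties}\eqref{2} supply subsequential convergence $\nu_{(y_j, 0)} \to \nu_{f_\infty}$ as conjugate heat flows on $(M_\infty, g_\infty(t))$, and the vanishing soliton defect in the limit places $\nu_{f_\infty} \in \mathcal S$. Since $y_\infty$ is the basepoint at $t \to 0^-$ of this limiting density and $f_\infty(\cdot, t)$ attains its minimum along the soliton trajectory terminating at $y_\infty$, we conclude $y_\infty \in S^{\text{point}}_\infty$, in parallel with the conjugate heat flow convergence argument of Lemma \ref{s-s/e-d}.

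\textbf{Dimension count and contradiction.} The non-inflating estimate \eqref{non-inf} gives $\vol_{g_j(-\gamma_j^2)}(\tilde B_R(x, \gamma_j)) \leq v_2 R^n \gamma_j^n$ for each $x$, so the volume hypothesis forces any cover of $\mathcal P_j$ by $R$-scale balls of radius $\gamma_j$ to have at least $c(\epsilon, R) \gamma_j^{-k}$ members; equivalently, the Hausdorff $k$-pre-content of $\mathcal P_j$ at scale $\gamma_j$ is bounded below by a positive constant depending only on $\epsilon$ and $R$. Taking the Hausdorff limit $\mathcal P_\infty \subset \tilde B_{R/2}(p_\infty, 1)$ and invoking the previous paragraph, one finds $\mathcal P_\infty \subset S^{\text{point}}_\infty$ with $\mathcal H^k(\mathcal P_\infty) > 0$, so $\dim_H \mathcal P_\infty \geq k$. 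Since $K$ is compact this forces $l_\infty \geq k$, i.e., $(M_\infty, g_\infty(t))$ is at least $k$-self-similar. Choosing $q_\infty \in S^{\text{point}}_\infty \cap \tilde B_R(p_\infty, 1)$ and nearby $q_j \in \tilde B_R(p_j, 1)$, the smooth convergence $(M_j, g_j(t), q_j) \to (M_\infty, g_\infty(t), q_\infty)$ makes $(M_j, g_j(t), q_j)_{[-2\delta^{-2}, 0]}$ a $(k, \delta^2)$-self-similar flow at scale $1$ for $j$ large, contradicting our standing assumption.

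\textbf{Main obstacle.} The delicate step is the dimension count: one must carefully combine the non-inflating bound at the shrinking time $-\gamma_j^2 \to 0$ with a pre-content argument and Hausdorff convergence of the pinching sets in order to produce a positive $k$-Hausdorff measure on the Hausdorff limit $\mathcal P_\infty$. A companion subtlety is the identification of pinching limit points with the point spine $S^{\text{point}}_\infty$ rather than merely with the full spine $\mathcal S_\infty$, which requires tracking the soliton minimum trajectory of the limit density and is the analogue here of the conjugate heat flow identification used in the proof of Lemma \ref{s-s/e-d}.
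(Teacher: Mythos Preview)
Your approach is genuinely different from the paper's, and as written it has two real gaps that you flag but do not close.

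\textbf{The dimension count does not go through.} You send $\gamma_j\to 0$ and claim that a uniform lower bound on the $\gamma_j$--covering number of $\mathcal P_j$ yields $\mathcal H^k(\mathcal P_\infty)>0$ for the Hausdorff limit. This inference fails in general: Minkowski--type content at a scale tending to zero does not transfer to a Hausdorff measure lower bound on a Gromov--Hausdorff limit. Worse, your volume hypothesis is measured at time $-\gamma_j^2\to 0$, so you are taking a limit both in $j$ and in the time slice, and the limiting object is not the time--$(-1)$ slice of the soliton. Even granting $\mathcal H^k(\mathcal P_\infty)>0$, the conclusion $l_\infty\geq k$ does not follow from $\mathcal P_\infty\subset K\times\mathbb R^{l_\infty}$ with $K$ merely compact: for the cylinder $\mathbb R^{n-2}\times\mathbb S^2$ one has $K=\mathbb S^2$, so $K\times\mathbb R^{l_\infty}$ can carry large Hausdorff measure in dimensions exceeding $l_\infty$. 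The paper exploits precisely that $\mathrm{diam}_{g(-\gamma^2)}(K)\leq A(n)\gamma$, so at the \emph{fixed} scale $\gamma$ the set $K$ behaves like a point and $K\times\mathbb R^k$ can be covered by $C\gamma^{-k}$ balls of radius $\gamma$; this is lost when you let $\gamma_j\to 0$.

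\textbf{Pinching limits need not lie in $S_{\mathrm{point}}$.} Your argument that the limit conjugate heat flow lies in $\mathcal S$ is correct and is exactly what the proof of Lemma~\ref{s-s/e-d} establishes. But the further claim that $y_\infty$ is a minimum of the limiting potential (hence $y_\infty\in S_{\mathrm{point}}$) is not justified; indeed the paper explicitly remarks in the proof of Lemma~\ref{s-s/e-d} that ``we do not know whether $y_\infty\in S_{\mathrm{point}}$.'' The paper never needs this: it uses Lemma~\ref{s-s/e-d} to produce, near each pinching point $y$, a genuinely $(0,\delta'^2)$--self--similar point $z$ with $d_{g(-\gamma^2)}(y,z)\leq 2D\gamma$, and then compares $z$ to the approximate singular set $\mathcal L_{q,1}$ directly.

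\textbf{What the paper does instead.} The paper argues by induction on $k$ at a \emph{fixed} scale $\gamma$, with only an auxiliary parameter $\delta'\to 0$ used in a short contradiction at the end. The inductive hypothesis produces $q\in\tilde B_R(p,1)$ that is $(k,\delta'^2)$--self--similar; if $q$ is not $(k+1,\delta^2)$--self--similar then $\mathcal L_{q,1}\cap\tilde B_{2R}(q,1)$ is $\delta'$--GH close to $(K\times\mathbb R^k)\cap B(\tilde q,2R)$ in the metric $d_{g(-\gamma^2)}$, with $\mathrm{diam}(K)\leq A(n)\gamma$, hence has $\gamma$--tubular volume $\leq C(n,C_I,\Lambda)R^n\gamma^{n-k}$. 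Since $\epsilon\gamma^{n-k-1}\gg C R^n\gamma^{n-k}$ for $\gamma$ small, there is a pinching point $y$ at $d_{g(-\gamma^2)}$--distance $>R\gamma$ from $\mathcal L_{q,1}$, and Lemma~\ref{s-s/e-d} upgrades $y$ to a $(0,\delta'^2)$--self--similar point $z$ still far from $\mathcal L_{q,1}$. A compactness argument (citing \cite[Corollary~4.1]{PanGian}) then shows this is impossible for $\delta'$ small. This route uses only a finite--scale volume comparison and never needs Hausdorff measure on a limit, which is exactly what makes it work.
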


\begin{proof}We proceed by induction. For $k=0$ the result follows from Lemma \ref{s-s/e-d}. Assume it is true for $k$ and that we have 
$$\vol_{g(-\gamma^2)} \tilde B_{R, \gamma}(\mathcal P_{1, \xi, R}(p)) \geq \epsilon \gamma^{n-k-1}.$$ 
Since $\gamma < 1$ the induction assumption implies that there exists $q \in \tilde B_R(p,1)$ such that the pointed flow $(M, g(t), q)_{[-2\delta'^{-2},0]}$ is $(k, \delta'^2)$--self--similar at scale 1, where $\delta'$ an auxiliary constant to be fixed later. Assume to the contrary that $(M,g(t),q)_{[-2\delta'^{-2},0]}$ is not $(k+1, \delta^2)$--self--similar at scale 1. The latter implies that if $\delta'< \min( {\gamma}, \frac{1}R, \delta)$ then the Gromov--Hausdorff distance 
$$d_{GH}(\mathcal{L}_{q,1}\cap \tilde B_{2R}(q,1), (K \times \mathbb R ^k)\cap B(\tilde q, 2R) )< \delta',$$ where $\mathcal{L}_{q,1}\cap \tilde B_{2R}(q,1)$ is endowed with the metric $d_{g(-\gamma^2)}(\cdot,\cdot)$, $K \times \mathbb R^k$ is endowed with a product metric and  $\text{diam} (K) \leq A(n)\gamma.$ 
    
    Then a standard Vitali covering allows us to cover $(K \times \mathbb R ^k)\cap B(\tilde q, 2R)$ by $\{B(\tilde x_i, R\gamma)\}_{i=1}^N$ and $N \leq C(k) \gamma^{-k}$, provided that $R > 10 A(n).$ Thus we can cover  $\mathcal{L}_{q,1}\cap \tilde B_{2R}(q,1)$ by $\{\tilde B_{2R}(x_i, \gamma)\}_{i=1}^N$ hence we have that 
$$\vol_{g(-\gamma^2)}\tilde B_{2R, \gamma}(\mathcal{L}_{q,1}\cap \tilde B_{2R}(q,1)) \leq \sum_{i=1}^N \vol_{g(-\gamma^2)} \tilde B_{4R}(x_i , \gamma)\leq N v_2 \gamma^n R^n,$$ 
where for the latter inequality we applied the non--inflating estimate \eqref{non-inf}, provided that $\gamma < \frac{1}{4R}$. Put together we have that 
$$\vol_{g(-\gamma^2)}\tilde B_{2R, \gamma}(\mathcal{L}_{q,1}\cap \tilde B_{2R}(q,1)) \leq C(n, C_I, \Lambda ) R^n \gamma^{n-k}.$$ 
Using the latter inequality and the initial assumption on the entropy pinching set we have that 
$$\vol_{g(-\gamma^2)}\bigg(\tilde B_{R, \gamma}(\mathcal P_{1, \xi, R}(p))\setminus \tilde B_{2R, \gamma}(\mathcal{L}_{q,1}\cap \tilde B_{2R}(q,1))\bigg) $$
$$\geq R^n \gamma^{n-k} \bigg(\dfrac{\epsilon}{R^n\gamma}-C(n, C_I, \Lambda)\bigg) >0$$
    provided that $\gamma < \dfrac{\epsilon}{2 R^nC(n, C_I, \Lambda)}.$ 
Thus there exists $\tilde z \in \tilde B_{R, \gamma}(\mathcal P_{1, \xi, R}(p))$ and $d_{g(-\gamma^2)}(\tilde z, \tilde y)\geq 2R \gamma$ for any $\tilde y \in \mathcal{L}_{q,1}\cap \tilde B_{2R}(q,1)$. Hence we can find a point $y \in \mathcal P_{1, \xi, R}(p)$ with $d_{g(-\gamma^2)}(y, \tilde y) > R \gamma$ for any $\tilde y \in \mathcal{L}_{q,1}\cap \tilde B_{2R}(q,1)$. Then from Lemma \ref{s-s/e-d} there exists $z \in \tilde B_{2R}(q,1)$ such that $(M, g(t), z)$ is $(0, \delta'^2)$--self--similar at scale 1 and since $\delta'< \gamma$ we also have that $d_{g(-\gamma^2)}(z, y) \leq 2 D \gamma$. Thus using the triangle inequality and if $R > 2 D$ we get that $d_{g(-\gamma^2)}(z, \mathcal{L}_{q,1}\cap \tilde B_{2R}(q,1)) > \frac{R}{2}\gamma.$ 

The latter cannot happen provided that $\delta' \leq \delta'(n, C_I, \Lambda, R,\gamma),$ from \cite[Corollary 4.1]{PanGian}. To see this we can argue by contradiction and take $\delta'_j \to 0$ and using Theorem \ref{compactness}\eqref{II} we can derive the same shrinking soliton limit for $(M_j, g_j(t), z_j)_{[-2\delta'^{-2}_j, 0]}$ and $(M_j, g_j(t), q_j)_{[-2\delta'^{-2}_j,0]}$ which implies that $z_j$ is arbitrarily close to $\mathcal{L}_{q_j,1}\cap \tilde B_{2R}(q_j,1)$ thus leading to a contradiction.
\end{proof}

\begin{COROLLARY}\textup{(quantitative splitting at scale $r$)} Let $(M, g(t), p)_{[-2\xi^{-1}, 0]} \in \mathcal{F}(n,C_I, \Lambda)$ and $r \in (0,1)$, $\epsilon, \delta >0.$ Then if $R \geq R (n, C_I, \Lambda)$, $\gamma \leq \gamma(n, C_I, \Lambda, R, \epsilon)$, $\xi \leq \xi(n, C_I, \Lambda, R ,\delta, \gamma, \epsilon)$ and 
$$\vol_{g(-r^2 \gamma^2)}\tilde B_{R, r \gamma}(\mathcal P_{r, \xi, R}(p)) \geq \epsilon \gamma ^{n-k} r^n,$$
where $\overbar W \leq \displaystyle \inf_{\tilde B_{2R}(p,r)}\mathcal{W}(r \xi^{-1})$ there exists $q \in \tilde B_R(p,r)$ such that $(M, g(t), q)_{[-2\delta^{-2},0]}$ is $(k, \delta^2)$--self--similar at scale $r$.

\end{COROLLARY}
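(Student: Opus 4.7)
The plan is to deduce the corollary from Theorem~\ref{q.split} by a parabolic rescaling. Define the rescaled flow $g_r(t)=r^{-2}g(r^2 t)$ on the time interval $[-2\xi^{-1},0]$ (which is contained in $[-2\xi^{-1}/r^2,0]$, the full range of definition after rescaling, since $r<1$). Because the class $\mathcal F(n,C_I,\Lambda)$ is invariant under parabolic rescaling ($|\riem|_{g_r}\le C_I/|t|$ follows from the type--I bound for $g$, and $\nu$ is scale invariant so $\nu(g_r(-2\xi^{-1}),2\xi^{-1})\ge -\Lambda$), the rescaled flow $(M,g_r(t),p)_{[-2\xi^{-1},0]}$ lies in $\mathcal F(n,C_I,\Lambda)$.

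Next I would translate each geometric quantity appearing in the hypotheses of Theorem~\ref{q.split} into its counterpart for $g$. First, $R$--scale balls transform as $\tilde B^{g_r}_R(x,s)=\tilde B^{g}_R(x,rs)$, since $d_{g_r(-s^2)}(x,y)<sR$ is equivalent to $d_{g(-r^2s^2)}(x,y)<rsR$. Second, the pointed entropy satisfies $\mathcal W_y^{g_r}(\tau)=\mathcal W_y^{g}(r^2\tau)$, which together with the ball identity gives
\begin{equation*}
\mathcal P_{1,\xi,R}^{g_r}(p)=\{y\in \tilde B^{g_r}_{R/2}(p,1)\mid \mathcal W^{g_r}_y(\xi)-\overbar W<\xi\}
= \{y\in \tilde B^{g}_{R/2}(p,r)\mid \mathcal W^{g}_y(\xi r^2)-\overbar W<\xi\}
= \mathcal P_{r,\xi,R}^{g}(p).
\end{equation*}
Finally, $\vol_{g_r(-\gamma^2)}(A)=r^{-n}\vol_{g(-r^2\gamma^2)}(A)$ and $\tilde B^{g_r}_{R,\gamma}(\mathcal P^{g_r}_{1,\xi,R}(p))=\tilde B^{g}_{R,r\gamma}(\mathcal P^{g}_{r,\xi,R}(p))$, so the hypothesis
\begin{equation*}
\vol_{g(-r^2\gamma^2)}\tilde B^{g}_{R,r\gamma}(\mathcal P^{g}_{r,\xi,R}(p))\ge \epsilon \gamma^{n-k}r^n
\end{equation*}
translates to $\vol_{g_r(-\gamma^2)}\tilde B^{g_r}_{R,\gamma}(\mathcal P^{g_r}_{1,\xi,R}(p))\ge \epsilon\gamma^{n-k}$. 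The constraint on $\overbar W$ transforms analogously to $\overbar W\le \inf_{\tilde B^{g_r}_{2R}(p,1)}\mathcal W^{g_r}_y(\xi^{-1})$.

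With all hypotheses of Theorem~\ref{q.split} verified for the flow $g_r$ (using the same constants $R,\gamma,\xi,\epsilon,\delta$, whose dependence only involves $n,C_I,\Lambda$ and the other parameters, all of which are scale invariant), the theorem yields a point $q\in \tilde B^{g_r}_R(p,1)=\tilde B^{g}_R(p,r)$ around which $(M,g_r(t),q)_{[-2\delta^{-2},0]}$ is $(k,\delta^2)$--self--similar at scale $1$. By the very definition of the notion of $(k,\delta^2)$--self--similarity at scale $r$ (which is stated in terms of the rescaled flow $g_r$), this is exactly the assertion that $(M,g(t),q)_{[-2\delta^{-2}r^2,0]}$ is $(k,\delta^2)$--self--similar at scale $r$, completing the proof.

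There is no real obstacle here: the argument is a bookkeeping exercise in parabolic rescaling. The only points requiring a small amount of care are checking that the class $\mathcal F(n,C_I,\Lambda)$ and the various smallness thresholds $R(n,C_I,\Lambda)$, $\gamma(n,C_I,\Lambda,R,\epsilon)$, $\xi(n,C_I,\Lambda,R,\delta,\gamma,\epsilon)$ depend only on scale--invariant data, so the same thresholds work for $g_r$ as for $g$, and that the rescaled flow is defined on a long enough time interval for Theorem~\ref{q.split} to apply.
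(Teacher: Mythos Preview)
Your proposal is correct and follows essentially the same approach as the paper: both reduce to Theorem~\ref{q.split} via the parabolic rescaling $\tilde g(t)=r^{-2}g(r^2t)$, checking that $R$--scale balls, pointed entropies, volumes, and the constraint on $\overbar W$ transform as expected. You are simply more explicit about the bookkeeping than the paper's one--paragraph proof; the only minor point the paper adds is an appeal to the monotonicity of the pointed entropy when verifying the $\overbar W$ constraint (since the stated hypothesis involves $\mathcal W_y(r\xi^{-1})$ rather than $\mathcal W_y(r^2\xi^{-1})$), which you absorb into ``transforms analogously''.
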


\begin{proof} The proof is done by scaling the metric $\tilde g(t)= r^{-2}g(r^2t)$. Since $\tilde B_R^{g}(p,r)=\tilde B_R^{\tilde g}(p,1)$, from Lemma \ref{properties of D_R}\eqref{3}, the scale invariance and the monotonicity of the pointed entropy functional we conclude that $\overbar W \leq \inf_{\tilde B^{\tilde g}_{2R}(p,1)} \mathcal{W}^{\tilde g}_y(\xi^{-1})$ thus for $\mathcal{P}^{\tilde g}_{1,\xi, R}(p)$ with respect to the metric $\tilde g$ and fixed reference value $\overbar W$ we have
$$\vol_{\tilde g (-\gamma^2)}\tilde B^{\tilde g}_{R, \gamma}(\mathcal P^{\tilde g}_{1, \xi, R}(p))\geq \epsilon \gamma ^{n-k},$$
   and the result follows from Theorem $\ref{q.split}$.
\end{proof}

\noindent
\inlinesubsection{$\eta$--regularity} For a flow $(M, g(t))_{[-T,0]} \in \mathcal{F}(n,C_I, \Lambda)$ we define the curvature radius at a point $x \in M$ 
$$ r_{Rm}^R(x)= \sup\bigg\{r \leq \sqrt{T} \mid \:|Rm|_{g(t)} \leq r^{-2} , \text{ for any } (x,t) \in \tilde B_R(x,r)\times [-r^2,0] \bigg\}.$$ 

\noindent

Since the flow is in the class $\mathcal{F}(n,C_I, \Lambda)$ the set over which we take the supremum is non--empty and the curvature radius is well defined. We mention the following elementary property which is based on how the curvature tensor behaves under scaling. If $\tilde g(t) = \lambda^{-2}g(\lambda^2 t)$ then 

\begin{equation}
\label{scaling of curvature radius}
      r_{Rm}^{R, \tilde g}(x) = \lambda^{-1} r_{Rm}^{R,g}(x) 
\end{equation}

We prove a quantitative version of $\eta$--regularity, which roughly says that if the pointed flow $(M, g(t), x)_{[-2\eta^{-1},0]}$ is sufficiently $(n, \eta)$--self--similar at some scale then we have uniform curvature bounds in an entire parabolic ball based at $x$ and up to time $t=0.$ The main issue to solve is that $(n, \eta)$--self--similarity gives uniform curvature bounds up to time $t = -\eta$, thus we need to extend these curvature bounds up to $t=0.$ To do that we need Perelman's pseudolocality theorem, see \cite{Per_02} or \cite[Theorem 21.2]{Chow III} which we recall here (written suitably for our purposes).

\begin{theorem}\textup{(pseudolocality theorem)}\label{pseudo} Let $(M, g(t))_{[-T, 0]}$ be a closed Ricci flow such that at some time $t_0$ we have the curvature bound $|Rm|_{g(-t_0^2)}\leq \frac{1}{|t_0|^2r_0^2}$ in $\tilde B_{r_0}(p, |t_0|)$. Then there exist $\epsilon_0, \delta_0>0$ depending on $n$ such that for $\epsilon \leq \epsilon_0$, $\delta\leq \delta_0$ if the curvature is bounded in $[-t_0^2, -t_0^2 + (\epsilon r_0)^2]$ and $\vol_{g(-t_0^2)}\tilde B_{r_0}(p, |t_0|) > (1-\delta)\omega_n r_0^n |t_0|^n$ then 
$$|Rm|_{g(t)} \leq \frac{1}{\epsilon|t_0|^2r_0^2},$$
    for any $(x,t) \in \tilde B_{\epsilon r_0}(p, |t_0|)\times [-t_0^2, -t_0^2 + t_0^2 (\epsilon r_0)^2].$
\end{theorem}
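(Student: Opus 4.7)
The plan is to follow Perelman's original argument from \cite{Per_02}, since this is the classical pseudolocality statement specialized to the parabolic scale $|t_0|^2$. By parabolic rescaling $\tilde g(t) = |t_0|^{-2} g(|t_0|^2 t)$ we may reduce to the case $|t_0| = 1$, and by a further rescaling it suffices to work at unit scale $r_0 = 1$ (with the volume and curvature normalizations transforming accordingly). So the task is to show: if $|\mathrm{Rm}|_{g(-1)} \leq 1$ on $B_{g(-1)}(p,1)$ and $\vol_{g(-1)} B_{g(-1)}(p,1) \geq (1-\delta)\omega_n$, then $|\mathrm{Rm}|_{g(t)} \leq \epsilon^{-1}$ on a forward parabolic ball of size $\epsilon$.

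I would argue by contradiction. Suppose there is a sequence of flows $(M_j, g_j(t))$ and $\epsilon_j, \delta_j \to 0$ for which the conclusion fails. Applying Perelman's point-picking lemma (a dyadic iteration going backward in time and space) to the set of space-time points where $|\mathrm{Rm}| > (\epsilon_j r)^{-2}$, one extracts points $(x_j, t_j)$ realizing an "almost-first" violation, in the sense that the curvature at $(x_j, t_j)$ dominates all curvatures at nearby earlier points in a controlled parabolic neighborhood. Rescaling so that $|\mathrm{Rm}|(x_j, t_j) = Q_j \to \infty$ and shifting $(x_j, t_j)$ to the origin produces a sequence of pointed flows that, modulo a Hamilton-type compactness argument, subconverges to a smooth ancient Ricci flow $(M_\infty, g_\infty(t))$ with $|\mathrm{Rm}|(x_\infty, 0) = 1$ and uniformly bounded curvature on the backward parabolic region.

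The heart of the argument is then to rule out this limit using the monotonicity of Perelman's reduced volume $\tilde V$ based at $(x_j, t_j)$. One exploits two facts simultaneously. First, the almost-Euclidean volume hypothesis at the initial slice forces, via a Bishop--Gromov-type comparison applied to the $\mathcal{L}$-length together with an estimate for reduced distance emanating from $(x_j,t_j)$, that $\tilde V$ at the initial time is at least $1 - o(1)$ as $j \to \infty$. Second, by the monotonicity of reduced volume under Ricci flow, this near-Euclidean value propagates down to the scale where the blow-up takes place, so the reduced volume of the limit $(M_\infty, g_\infty)$ at $(x_\infty, 0)$ is $\geq 1$. The rigidity case of the reduced-volume monotonicity forces the limit to be $\mathbb{R}^n$ with the flat metric, contradicting $|\mathrm{Rm}|(x_\infty, 0) = 1$.

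The main technical obstacle is the propagation of the near-Euclidean reduced volume: since the reduced volume is based at a point in the future and integrates along $\mathcal{L}$-geodesics into the past, one must control these $\mathcal{L}$-geodesics in a region where a priori the curvature can be large. This is handled by Perelman's localized reduced-volume estimate, which uses a carefully chosen cutoff function satisfying a differential inequality along the conjugate heat flow, ensuring the contribution from far-away $\mathcal{L}$-geodesics can be ignored. The forward curvature-bounded assumption on $[-1, -1+\epsilon^2]$ in the hypothesis is precisely what guarantees that the compactness argument and $\mathcal{L}$-geodesic tracking are well-defined up to the contradiction point, after which the conclusion $|\mathrm{Rm}| \leq \epsilon^{-1}$ follows by undoing the normalization.
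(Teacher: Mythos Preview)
The paper does not give a proof of this statement; it is explicitly recalled from \cite{Per_02} (see also \cite[Theorem 21.2]{Chow III}) and merely restated in the paper's $\tilde B_R$ notation. There is therefore no in-paper proof to compare against.

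Evaluating your sketch against Perelman's actual argument: the broad architecture (contradiction, point-picking, a monotone quantity forcing rigidity) is right, but two substantive points are off. First, Perelman's proof in \cite{Per_02}, \S10, does not use the reduced volume $\tilde V$; it uses the conjugate heat kernel based at the point-picked $(x_j,t_j)$, the Harnack quantity $v = [\tau(2\Delta f - |\nabla f|^2 + R) + f - n]u$, and a localized logarithmic Sobolev inequality on the initial slice to force $\int v$ close to zero there. A reduced-volume approach faces a genuine localization problem, since $\mathcal L$-geodesics emanating backward from $(x_j,t_j)$ can exit the ball where the almost-Euclidean hypothesis gives any control; the cutoff you allude to in Perelman's argument is for the conjugate heat equation, not for $\mathcal L$-length. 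Second, Perelman does not extract a smooth Cheeger--Gromov limit --- at this stage you have no injectivity-radius lower bound to justify Hamilton compactness --- and instead derives the contradiction directly on the sequence by comparing $\int v$ at two times.
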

\noindent
We can now prove the following:

\begin{theorem}\textup{($\eta$--regularity)}\label{eta} Let $(M, g(t))_{[-2\eta^{-1}, 0]}\in \mathcal{F}(n,C_I, \Lambda)$ and $\eta'\in (0,1).$ If the pointed flow $(M, g(t), x)_{[-2\eta^{-1},0]}$ is $(n-1, \eta)$--self--similar at scale 1 then for $\eta \leq \eta(n, C_I, R, \eta')$ we have that $ r_{Rm}^R(x)> \eta'^{-1}.$ 
    
\end{theorem}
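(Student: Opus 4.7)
The starting point is that any $(n-1)$--self--similar flow must be the static flat Euclidean $\mathbb R^n$: splitting off $n-1$ Euclidean factors leaves a $1$--dimensional shrinking Ricci soliton, and the only such is the Gaussian on $\mathbb R$ (on any closed $1$--manifold the equation $\nabla^2 f = g/(2\tau)$ admits no solution, while on $\mathbb R$ it gives $f(x)=x^2/(4\tau)$). The plan is to combine this observation with Perelman's pseudolocality Theorem \ref{pseudo} in order to propagate smooth curvature bounds from the self--similarity region up to $t=0$.

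From $(n-1,\eta)$--self--similarity at scale $1$ there is a diffeomorphism $F$ defined on the Euclidean ball of radius $\eta^{-1}$ around the origin, with $F(0)=x$, satisfying $\|F^\star g(t) - g_{\mathbb R^n}\|_{C^l} < \eta$ for every $t\in [-\eta^{-1}, -\eta]$ (with $l<\eta^{-1}$). I would fix a large auxiliary constant $C = C(R,\eta',\epsilon_0)$ and apply Theorem \ref{pseudo} at $t_0 = \sqrt{\eta}$ with $r_0 := C\eta'^{-1}\eta^{-1/2}$, so that $r_0|t_0| = C\eta'^{-1}$. For $\eta$ small enough the $C^l$--closeness to the static flat model gives both
\begin{equation*}
|Rm|_{g(-\eta)} \leq \frac{1}{|t_0|^2 r_0^2} \;\text{ on } \tilde B_{r_0}(x,|t_0|), \qquad \vol_{g(-\eta)}\tilde B_{r_0}(x,|t_0|) > (1-\delta_0)\omega_n r_0^n |t_0|^n,
\end{equation*}
while smoothness of the flow supplies the auxiliary curvature--bounded hypothesis. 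Pseudolocality then yields
\begin{equation*}
|Rm|_{g(t)} \leq \frac{1}{\epsilon_0 |t_0|^2 r_0^2} = \frac{\eta'^2}{\epsilon_0 C^2} \qquad \text{on } \tilde B_{\epsilon_0 r_0}(x,|t_0|)\times\left[-\eta,\; -\eta + \epsilon_0^2 C^2 \eta'^{-2}\right],
\end{equation*}
and for $\eta$ small the right endpoint of this time interval is strictly positive, so the bound extends past $t=0$.

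To conclude I would stitch the two regimes together. For $t\in [-4\eta'^{-2}, -\eta]$ (which is contained in $[-\eta^{-1},-\eta]$ once $\eta < \tfrac{1}{4}\eta'^2$) the $C^l$--closeness to the flat static model directly yields $|Rm|_{g(t)} \leq (2\eta'^{-1})^{-2}$ on any fixed ball. For $t\in [-\eta,0]$ one uses the pseudolocality bound. Choosing $C$ large enough depending on $R$ and using the distance distortion of Lemma \ref{properties}\eqref{3} together with the $C^l$--closeness, the spatial ball $\tilde B_R(x, 2\eta'^{-1}) = B_{g(-4\eta'^{-2})}(x, 2R\eta'^{-1})$ sits inside $B_{g(-\eta)}(x,\epsilon_0 C \eta'^{-1})$ where the pseudolocality bound holds, and we obtain $|Rm|_{g(t)} \leq (2\eta'^{-1})^{-2}$ throughout $\tilde B_R(x,2\eta'^{-1})\times [-4\eta'^{-2},0]$. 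By definition this gives $r^R_{Rm}(x)\geq 2\eta'^{-1} > \eta'^{-1}$.

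The main technical point is the ball matching: pseudolocality produces a bound on a $g(-\eta)$--ball, whereas the definition of the curvature radius requires a bound on a $g(-r^2)$--ball with $r\sim\eta'^{-1}$. Reconciling the two amounts to exploiting the $C^l$--closeness (which makes the metrics $g(t)$ essentially the same Euclidean metric throughout $[-\eta^{-1},-\eta]$) together with the distance distortion estimates of Lemma \ref{properties}, and choosing $C$ large enough to absorb the resulting constants.
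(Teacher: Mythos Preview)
Your proof is correct and follows the same strategy as the paper: the $(n-1)$--self--similar model must be the Gaussian soliton on $\mathbb{R}^n$, and Perelman's pseudolocality (Theorem~\ref{pseudo}) then propagates the curvature bound from the self--similarity window $[-\eta^{-1},-\eta]$ up to $t=0$. The only difference is in implementation: the paper argues by contradiction, applying pseudolocality at a putative bad point $y$ with base time $t_0=-\eta'^{-1}$ and $\eta$--independent radius $r_0=\epsilon^{-1}$ (using the type--I distortion of Lemma~\ref{properties}\eqref{4} for the ball matching), whereas you apply it directly at $x$ from base time $-\eta$ with a large $\eta$--dependent radius and handle the ball matching via the $C^l$--closeness to Euclidean.
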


\begin{proof} Since $(M, g(t), x)_{[-2\eta^{-1},0]}$ is $(n-1, \eta)$--self--similar at scale 1 there exists a model flow of the form $(\mathbb R ^{n-1}\times M', g_s(t), \tilde x)_{(-\infty,0)}$. The only such $(n-1)$--self--similar flow is $\mathbb R^n$ thus $(M, g(t), x)_{[-2\eta^{-1},0]}$  is $(n, \eta)$--self--similar at scale 1 and the model flow is the Gaussian soliton. 

We have that $|Rm|_{g(t)}\leq \eta$ in $\tilde B_{\eta^{-1}}(x, 1)\times [-\eta^{-1},-\eta]$. If $y \in \tilde B_R(x,  \eta'^{-1})$ then the type--I bound implies that 
$$d_{g(-1)}(y,x) \leq (\eta'^{-1})^{2C_I} R  \eta'^{-1} < \eta^{-1},$$
provided that $\eta < \dfrac{\eta'^{2C_I +1}}{R}.$ 

In particular, we conclude that if $\eta \leq \eta(C_I, R, \eta')$ then $|Rm|_{g(t)} \leq \eta'$ in $\tilde B_R(x, \eta'^{-1})\times [-\eta'^{-2}, -\eta'^2].$ However, we want to have the curvature bound up to $t=0.$ 

We argue by contradiction and assume that no matter how small $\eta$ is we have that $ r_{Rm}^R(x) \leq \eta'^{-1}$ which means that there exists $y \in \tilde B_R(x, \eta'^{-1}) $ and $s_0 \in [-\eta'^{-2},0]$ such that $|Rm|_{g(s_0)} (y) > \eta'.$ We may assume that $-\eta< s_0$ since else there is nothing to prove by the above estimates. 

Consider  $z \in \tilde B_{r_0}(y, |t_0|)$ for $t_0 = - \eta'^{-1}$, $r_0 = \epsilon^{-1}$, where $\epsilon \leq \epsilon(n)$ is given by Theorem \ref{pseudo}. Then if we take $\eta< \min \{\frac{\eta'^{2C_I +1}\epsilon}{2}, \eta'^{2C_I +1}\frac{1}{2R}, \eta'^2 \epsilon \} $ Lemma \ref{properties}\eqref{4} implies that $z \in \tilde B_{\eta^{-1}} (x, 1)$ thus we have that $|Rm|_{g(-t_0^2)}< \eta < \eta'^2\epsilon^2 = |t_0|^{-2}r_0^{-2}$ in $\tilde B_{r_0}(y, |t_0|)$. The almost Euclidean volume estimate at $-t_0^2$ follows from $(n, \eta)$--self--similarity and also the curvature is bounded in $[-\eta'^{-2}, -\eta'^{-2} + \eta'^{-2}\epsilon^2r_0^2] = [-\eta'^{-2}, 0]$ since the flow is smooth up to $t=0.$ Thus from Theorem \ref{pseudo} we get that 
$$|Rm|_{g(t)} \leq \frac{1}{\epsilon^2|t_0|^2 r_0^2} = \eta'^2 < \eta',$$
in $\tilde B_{\epsilon r_0} (y, \eta'^{-1})\times [-\eta'^{-2},0]$ which contradicts our assumption at the point $(y, s_0).$ 
\end{proof}

\section{The decomposition theorem}  \label{section4}

In this section we prove the main decomposition theorem. Starting with an arbitrary ball at scale $1$ we show that it can be covered by balls that satisfy uniform curvature bounds. 
Moreover, we prove that the balls have bounded $(n-2)$--content. In particular we have the following: 

\begin{theorem}\label{b-ball dec} Let $(M, g(t), p)_{[-2,0]} \in \mathcal{F}(n,C_I, \Lambda)$, $M$ is orientable and $R \geq R(n, C_I, \Lambda).$ Then 
$$\tilde B_R(p,1) \subset \bigcup_b \tilde B_R(x_b, r_b), $$
  where $ r^{2R}_{Rm}(x_b) > 2r_b,$ with $r_b < 1$ and 
  $$\sum_b r_b^{n-2}\leq C(n, C_I, \Lambda, R).$$
\end{theorem}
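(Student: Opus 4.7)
The strategy follows the inductive covering scheme of Jiang--Naber \cite{JiangNaber} and Cheeger--Jiang--Naber \cite{CJN}, adapted here via the four lemmata outlined in the introduction (Lemma \ref{d-ball dec}, Lemma \ref{neck construction/estimates}, Lemma \ref{inductive covering} and Lemma \ref{v-ball dec}). I would begin with the single ball $\tilde B_R(p,1)$ and inductively produce refined covers at consecutively smaller scales. At each stage, every ball $\tilde B_R(x,r)$ in the current cover is placed in one of three categories according to the tubular Minkowski $(n-2)$--content of its entropy pinching set $\mathcal P_{r,\xi,R}(x)$, taken with respect to a reference value $\overbar W$ that is updated only finitely often along the iteration.

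First I would fix the small parameters in the order of smallness $\epsilon\gg\gamma\gg\delta\gg\xi$ dictated by the quantitative splitting Theorem \ref{q.split}, the $\eta$--regularity Theorem \ref{eta} and Lemma \ref{neck construction/estimates}, and initialize $\overbar W\le\inf_{\tilde B_{2R}(p,1)}\mathcal W_y(\xi^{-1})$. For a ball whose pinching set has small $(n-2)$--content, Lemma \ref{d-ball dec} produces a sub--cover of controlled cardinality at a strictly smaller scale, so iteration yields a telescoping content estimate. For a ball whose pinching set has large $(n-2)$--content, Theorem \ref{q.split} applied with $k=n-2$ yields a point $q\in\tilde B_R(x,r)$ at which the flow is $(n-2,\delta^2)$--self--similar at scale $r$. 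Since no nontrivial gradient shrinking soliton splits more than $n-2$ Euclidean factors and the only orientable $2$--dimensional shrinker is the round sphere, the model flow must be the round cylinder $\mathbb R^{n-2}\times\mathbb S^2$. Lemma \ref{neck construction/estimates} then builds a neck region of maximal symmetry in the sense of Definition \ref{def:necks_ms}, and the neck structure Theorem \ref{neck structure} supplies a covering of that neck region by balls satisfying $r_{\riem}^{2R}(x_b)>2r_b$ together with a uniform $(n-2)$--content estimate. Finally, a ball with empty pinching set corresponds to a pointed entropy drop of at least $\xi$ below $\overbar W$ throughout its half--ball; Lemma \ref{inductive covering} and Lemma \ref{v-ball dec} eliminate such balls by relaunching the scheme with an updated reference value. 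Since by Lemma \ref{properties} and \eqref{PH in} the pointed entropy is monotone and trapped in $[-\Lambda,0]$, at most $O(\Lambda/\xi)$ such relaunches can occur.

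Termination of the iteration is forced by $\eta$--regularity, Theorem \ref{eta}: as soon as a ball becomes $(n-1,\eta)$--self--similar at its scale, the curvature radius bound $r_{\riem}^{2R}(x_b)>2r_b$ holds and the ball is retired as ``good''. Along any branch that avoids retirement, the refinement scale decreases by a definite factor at each generation, so the refinement bottoms out in finitely many steps whose number is controlled by $n, C_I,\Lambda, R$. The $(n-2)$--content contributed by each layer then forms a convergent geometric series, thanks in particular to the non--collapsing \eqref{non-coll} and non--inflating \eqref{non-inf} volume estimates used in the covering arguments of Lemma \ref{d-ball dec}.

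The hard part is the content bookkeeping. One must verify simultaneously that the $(n-2)$--content produced at the neck stages telescopes across the iteration, that each of the finitely many $\overbar W$--restarts resumes from a cover whose accumulated content is still controlled, and that the trichotomy is consistent with the parameter hierarchy so that no ball is processed twice and no ball falls through the cracks. The essential nonperturbative input making this possible is the scale--invariant $(n-2)$--content estimate provided by the neck structure Theorem \ref{neck structure}, which ensures that the sum over all neck contributions remains uniformly bounded independently of how fine the iteration becomes.
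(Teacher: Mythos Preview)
Your proposal is correct and matches the paper's approach. One correction: the number of inner refinement steps (in Lemmata \ref{d-ball dec} and \ref{inductive covering}) is not controlled by $n,C_I,\Lambda,R$---those loops terminate only by smoothness of the flow at $t=0$, and the uniform content bound there genuinely comes from the geometric-series convergence you note rather than from a bounded depth; it is only the outermost $\overbar W$-restart loop (iterating Lemma \ref{v-ball dec}) that halts in at most $2\Lambda/w_0$ steps for the entropy reason you give.
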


In order to prove the latter we will need to decompose the scale distance ball into balls of several types based on the entropy pinching set and Theorem \ref{q.split}. We set up the notation 

\begin{itemize}
    \item[1)] $\tilde B_R (x_b, r_b)$ will denote a ball such that $ r _{Rm}^{2R}(x_b) > 2 r_b.$ 
    \item[2)] $\tilde B_R (x_c, r_c)$ will denote a ball such that $ r_{Rm}^{2R}(x_c)\leq 2r_c$ and $$\vol_{g(-\gamma^2r_c^2)}\tilde B_{4R, \gamma r_c}(\mathcal P_{r_c, \xi, 4R}(p)) \geq \epsilon \gamma^2 r_c^n.$$
    \item[3)]  $\tilde B_R(x_d, r_d) $ will denote a ball such that $ r_{Rm}^{2R}(x_d)\leq 2r_d$, $\mathcal{P}_{r_d,\xi, R}(x_d)\neq \emptyset$ and $$\vol_{g(-\gamma^2r_d^2)}\tilde B_{4R, \gamma r_d}(\mathcal P_{r_d, \xi, 4R}(p)) < \epsilon \gamma^2 r_d^n.$$ 
    \item[4)] $\tilde B_R(x_{\tilde d}, r_{\tilde d})$ will denote a ball such that $ r_{Rm}^{2R}(x_{\tilde d})\leq 2r_{\tilde d}$
    $$\vol_{g(-\gamma^2r_{\tilde d}^2)}\tilde B_{4R, \gamma r_{\tilde d}}(\mathcal P_{r_{\tilde d}, \xi, 4R}(p)) < \epsilon \gamma^2 r_{\tilde d}^n.$$ 
    \item[5)] $\tilde B_R (x_e, r_e)$ will denote a ball such that $ r_{Rm}^{2R}(x_e)\leq 2r_e$ and $\mathcal P_{r_e, \xi, 4R}(x_e) = \emptyset.$ 
    \item[6)] $\tilde B_R (x_f, r_f)$ will denote a ball for which we know nothing about. 
\end{itemize}

\noindent
\inlinesubsection{Decomposition of d--balls} If we start with a $d$--ball then we have insufficient information about the symmetries of the flow at that scale. However, we can start by decomposing it into balls of different types. We can then recursively decompose the remaining $d$--balls until none are left. The smoothness of the flow together with the $(n-2)$--content estimate of the initial pinching set gives that the process terminates after finitely many steps and that the $c$--balls left on the covering have small $(n-2)$--content. 

\begin{LEMMA} \label{d-ball dec} Let $(M, g(t),p)_{[-2\xi^{-1},0]}\in \mathcal{F}(n,C_I, \Lambda)$. If $R \geq R(n, C_I, \Lambda),$ $\gamma < \frac{1}{10}$, $\epsilon \leq \epsilon(n, C_I, \Lambda, R)$ and $\xi \leq \xi(n, C_I, \Lambda , R, \gamma, \epsilon)$ such that $ r_{Rm}^{2R}(p) \leq 2$ , $\mathcal{P}_{1,\xi, 4R}(p) \neq \emptyset$ and
\begin{equation}
\label{labeled}
\vol_{g(-\gamma^2)}\tilde B_{4R, \gamma}(\mathcal P_{1, \xi, 4R}(p))<  \epsilon \gamma^2, 
\end{equation}
where $\overbar W \leq \inf_{\tilde B_{2R}(p,1)} \mathcal{W}_y(\xi^{-1})$, then 
$$\tilde B_R (p,1) \subset \bigcup_b \tilde B_R (x_b, r_b)\cup \bigcup_c \tilde B_R(x_c, r_c) \cup \bigcup _e \tilde B_R (x_e, r_e),$$ 
with estimates $$\sum_b r_b^{n-2} + \sum_e r_e^{n-2} \leq C(n, C_I, \Lambda, \gamma, R),$$
$$\sum_c r_c^{n-2} \leq C(n,C_I, \Lambda, R)\epsilon.$$
    
\end{LEMMA}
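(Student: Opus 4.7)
The plan is to carry out an iterative Vitali-type decomposition, shrinking the scale by a factor of $\gamma$ at each step. Starting from $\tilde B_R(p,1)$ (itself a $d$-ball by the hypotheses), at step $k\geq 1$ we process each pending $d$-ball $\tilde B_R(x_d,\gamma^{k-1})$ from the previous step by covering it with a Vitali family $\{\tilde B_R(x_i,\gamma^k)\}$ whose $\gamma^k/5$-shrinkings are disjoint. The non-inflating estimate \eqref{non-inf} and the non-collapsing estimate \eqref{non-coll} jointly cap the number of sub-balls per parent by $C(n,C_I,\Lambda,R)\gamma^{-n}$. Each sub-ball is classified as a $b$, $c$, $e$, or $d$ ball according to the scheme set up above, and the new $d$-balls feed into step $k+1$. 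The iteration terminates in finitely many steps because the flow is smooth on $[-2\xi^{-1},0]$, so $r_{Rm}^{2R}$ has a positive infimum on $\tilde B_R(p,1)$ which eventually exceeds $2\gamma^k$ and forces every remaining sub-ball to be a $b$-ball.

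The decisive refinement, without which the total $(n-2)$-content would blow up under iteration, is that the number of $c$- and $d$-children per $d$-parent is much smaller than the crude Vitali bound $C\gamma^{-n}$. Any $c$- or $d$-child $\tilde B_R(x_{ch},\gamma^k)$ carries a non-empty pinching set at scale $\gamma^k$; since the pointed entropy $\mathcal W_y(\tau)$ is monotone non-increasing in $|t|$, a point $y$ pinched at the smaller scale $\xi\gamma^{2k}$ is still pinched at the larger scale $\xi\gamma^{2(k-1)}$. Combined with the almost-triangle inequality of Lemma \ref{properties of D_R} (with a mild enlargement of radii) this gives $\mathcal P_{\gamma^k,\xi,4R}(x_{ch})\subset \mathcal P_{\gamma^{k-1},\xi,4R}(x_d)$, with the same reference value $\overbar W$ admissible throughout (the condition $\overbar W\leq \inf_{\tilde B_{2R}(p,1)}\mathcal W_y(\xi^{-1})$ is preserved on sub-balls). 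Consequently every $c$- or $d$-child center sits inside the tubular neighborhood $\tilde B_{4R,\gamma^k}(\mathcal P_{\gamma^{k-1},\xi,4R}(x_d))$, whose $g(-\gamma^{2k})$-volume is bounded by $\epsilon\gamma^2(\gamma^{k-1})^n$ via the parent's own $d$-condition. Comparing with the non-collapsing lower bound $v_1(\gamma^k/5)^n$ on the disjoint Vitali sub-balls bounds the number of $c$- or $d$-children per parent by $C(n,C_I,\Lambda,R)\,\epsilon\,\gamma^{2-n}$.

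Propagating these bounds through the iteration gives $N_d^{(k)},\,N_c^{(k)}\leq (C\epsilon\gamma^{2-n})^k$. Multiplying by $(\gamma^k)^{n-2}$ the $\gamma$-exponents telescope, since $(\gamma^{2-n})^k\cdot\gamma^{k(n-2)}=1$, leaving the clean geometric series $\sum_k(C\epsilon)^k$. Choosing $\epsilon\leq \epsilon(n,C_I,\Lambda,R)$ so that $C\epsilon\leq 1/2$ produces
\begin{equation*}
\sum_c r_c^{n-2}\;\leq\;\sum_{k\geq 1}(C\epsilon)^k\;\leq\;C(n,C_I,\Lambda,R)\,\epsilon.
\end{equation*}
For the $b$- and $e$-balls we use instead the full bound $C\gamma^{-n}$ on total children per parent; the $\gamma$-exponent at step $k$ simplifies to $(2-n)(k-1)-n+k(n-2)=-2$, so each step contributes at most $C\gamma^{-2}(C\epsilon)^{k-1}$, and summing the geometric series yields $\sum_b r_b^{n-2}+\sum_e r_e^{n-2}\leq C(n,C_I,\Lambda,R)\,\gamma^{-2}$.

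The main technical obstacle is the careful bookkeeping of the nested pinching sets under the $R$-scale distance: verifying that pinching at the child's center and scale faithfully transfers to pinching at the parent's center and scale with the same fixed $\overbar W$ (invoking the monotonicity of $\mathcal W_y$ and the almost-triangle inequality of Lemma \ref{properties of D_R}), and that the volume estimates \eqref{non-inf} and \eqref{non-coll} apply uniformly at every scale $\gamma^k$ visited by the iteration. These requirements jointly dictate the smallness $\xi\leq \xi(n,C_I,\Lambda,R,\gamma,\epsilon)$ but pose no conceptual difficulty beyond controlling constants.
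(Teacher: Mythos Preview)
Your proposal is correct and follows essentially the same approach as the paper: an iterative Vitali refinement at scales $\gamma^k$, with the key observation that $c$-- and $d$--children of a $d$--parent are trapped in the parent's pinching tubular neighborhood (via entropy monotonicity and the almost--triangle inequality), so their number is only $C\epsilon\gamma^{2-n}$ rather than the crude $C\gamma^{-n}$, which makes the geometric series converge. The only notable difference is your termination argument: you invoke the positive infimum of $r_{Rm}^{2R}$ on the compact smooth flow to force all sub--balls to be $b$--balls once $2\gamma^k$ drops below it, whereas the paper argues more elaborately that a non--terminating iteration would force $\overbar W+\xi\geq 0$ (via convergence to the Gaussian soliton) and hence $\tilde B_{2R}(p,1)\subset\mathcal P_{1,\xi,4R}(p)$, contradicting \eqref{labeled} for $\epsilon<v_1/2$; your route is simpler and equally valid here. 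One cosmetic point: in the $R$--scale distance setting it is cleaner to take the disjoint Vitali family as $\tilde B_{R/2}(x_i,\gamma^k)$ (shrinking the $R$--parameter, as the paper does) rather than shrinking the radius by $1/5$, since non--collapsing \eqref{non-coll} then applies directly at time $-\gamma^{2k}$ without an extra volume--comparison step.
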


\begin{proof} We start by constructing a covering on $\tilde B_R(p,1)$ in the metric space $(M, d_{g(-\gamma^2)})$. Choose a maximal set of points $x_f \in \tilde B_R(p,1)$ so that $\tilde B_{R/2}(x_f, \gamma)$ are pairwise disjoint. Then $f$ must be finite since else, by compactness of $M$, $x_f \xrightarrow[d_{g(-\gamma^2)}]{} x_0$ and that would contradict the disjoint property. Furthermore, by maximality, the balls $\{\tilde B_R(x_f, \gamma)\}_f$ cover $\tilde B_R(p,1).$ By separating the balls into categories we have that
$$\tilde B_R (p,1) \subset \bigcup_b \tilde B_R (x_b, \gamma) \cup \bigcup_c \tilde B_R (x_c, \gamma) \cup \bigcup_d \tilde B_R (x_d, \gamma)\cup \bigcup_e \tilde B_R (x_e, r_e).$$ 
We first estimate the $(n-2)$--content of $b$--balls. Let $y \in \tilde B_{R/2} (x_b, \gamma)$ then triangle inequality and distance distortion \eqref{weak dist dist} imply that $d_{g(-1)}(y,p) < R(2+\gamma)$ thus 
$\bigcup_b \tilde B_{R/2}(x_b, \gamma) \subset \tilde B_{R(2+\gamma)}(p,1).$ Using the non--collapsing estimate \eqref{non-coll}, the non--inflating estimate \eqref{non-inf} and  \eqref{volume comp}, we have that 
$$N_b \gamma^n v_1 \leq\sum_{b=1}^{N_b} \vol_{g(-\gamma^2)} \tilde B_{R/2}(x_b, \gamma) \leq \vol_{g(-\gamma^2)}\tilde B_{R(2+\gamma)}(p,1) $$
$$\leq C(n) \vol_{g(-1)}\tilde B_{R(2+\gamma)}(p,1) \leq C(n) R^n (2+\gamma)^n v_2,$$
thus in total $N_b \leq C(n) R^n \frac{(2+\gamma)^n}{\gamma^n}\frac{v_2}{v_1}= C(n, C_I, \Lambda, R, \gamma)$ which implies that 
$$\sum_{b=1}^{N_b} \gamma^{n-2}\leq C(n, C_I, \Lambda, R, \gamma).$$
In exactly the same way we estimate the $(n-2)$--content of $e$--balls. To estimate the $(n-2)$--content of $c$--balls and $d$--balls we argue differently. We first estimate that of $d$--balls.  For each $x_d$ we find $y_d$ such that $\tilde B_{2R}(y_d, \gamma) \subset \tilde B_{4R, \gamma}(\mathcal P_{1, \xi, 4R}(p)).$ Indeed, let $y\in \mathcal P_{\gamma,\xi, 4R}(x_d)$ thus $y \in \tilde B_{2R}(x_d, \gamma)$ and $\mathcal W_y(\xi\gamma^2)- \overbar W < \xi$ (such a point $y$ exists since $\mathcal P_{\gamma, \xi, 4R}(x_d) \neq \emptyset).$ From the monotonicity of the pointed entropy we have that $\mathcal W_y(\xi)- \overbar W < \xi$.  Furthermore,  triangle inequality and \eqref{weak dist dist} imply that 
$d_{g(-1)}(y,p) \leq R + K + 2R \gamma$ and for $R > 2K$ we have that 
$d_{g(-1)}(y,p)< 2R$ thus in total $y \in \mathcal P_{1, \xi, 4R }(p).$ If we set $y_d=y$ we have proven the desired inclusion.

Define $I_d = \{l \in \{1, \dots N_d\} \mid \tilde B_{2R}(y_d, \gamma) \cap \tilde B_{2R}(y_l, \gamma) \neq \emptyset \}.$ Let $l \in I_d$ then $d_{g(-\gamma^2)}(y_d, y_l) < 4\gamma R$ thus $d_{g(-\gamma^2)}(x_d, x_l)\leq 8 \gamma R$ and if $x \in \tilde B_{R/2}(x_l,\gamma)$ then $d_{g(-\gamma^2)}(x, x_d)< 9 \gamma R$ hence \linebreak 
$\bigcup_{l\in I_d}\tilde B_{R/2}(x_l, \gamma) \subset \tilde B_{9R}(x_d, \gamma)$ thus from the non--collapsing estimate \eqref{non-coll} and the non--inflating estimate \eqref{non-inf} we have that 
     $$\sharp I_d \leq 9^n R^n \frac{v_2}{v_1} = C'(n, C_I, \Lambda, R).$$ 
We conclude that a point can belong to at most $\tilde C(n, C_I, \Lambda, R) $ many balls $\tilde B_{2R}(y_i, \gamma) $ thus we have that 
$$\sum_{d=1}^{N_d}\vol_{g(-\gamma^2)} \tilde B_{2R}(y_d, \gamma) \leq \tilde C(n, C_I, \Lambda, R)\vol_{g(-\gamma^2)}\bigg(\bigcup_{d} \tilde B_{2R}(y_d, \gamma)\bigg)\leq $$
$$\leq \tilde C(n, C_I, \Lambda, R)\vol_{g(-\gamma^2)}\tilde B_{4R, \gamma} (\mathcal P_{1, \xi, 4R}(p)).$$
Using \eqref{non-coll} again and \eqref{labeled} we obtain that
$N_d \gamma^n v_1 \leq \tilde C(n, C_I, \Lambda, R)\epsilon \gamma^2$ thus 
$$\sum_d \gamma^{n-2} \leq C(n, C_I, \Lambda, R) \epsilon,$$
where $C(n, C_I, \Lambda, R) = \tilde C(n, C_I, \Lambda, R) v_1^{-1}.$ 
Since we only used the fact that $\mathcal P_{\gamma, \xi, 4R}(x_d)\neq \emptyset$ for every $d$ we can use the same reasoning to estimate the $(n-2)$--content of $c$--balls. This finishes the first step of the covering. 

We are left with $d$--balls which we have to refine until there are none left by repeating the same argument. In what follows the constants $C(n, C_I, \Lambda, \gamma, R)$ and $C(n, C_I, \Lambda, R)$ will denote the constants from step 1. We proceed by induction and assume we have:

$$\tilde B_R(p,1 )\subset \bigcup_{d=1}^{N_d^{(i)}} \tilde B_R(x_d^{(i)}, \gamma^i)\cup \bigcup_{j=1}^{i}\bigg (\bigcup_{b=1}^{N_b^{(j)}}\tilde B_R(x_b^{(j)}, \gamma^j)\cup \bigcup_{c=1}^{N_c^{(j)}}\tilde B_R(x_c^{(j)}, \gamma^j)\cup \bigcup_{e=1}^{N_e^{(j)}}\tilde B_R (x_e^{(j)}, \gamma^j)\bigg)$$
with estimates 
$$\sum_{j=1}^i \bigg( \sum_{b=1}^{N_b^{(j)}}\gamma^{j(n-2)} + \sum_{e=1}^{N_e^{(j)}}\gamma^{j(n-2)} \bigg) \leq C(n, C_I, \Lambda, R, \gamma) \sum_{j=0}^{i-1}(C(n, C_I, \Lambda, R)\epsilon)^{j},$$ 
$$\sum_{j=1}^{i}\sum_{c=1}^{N_c^{(j)}}\gamma^{j(n-2)} \leq \sum_{j=1}^i (C(n, C_I, \Lambda, R)\epsilon)^j,$$ 
$$\sum_{d=1}^{N_d^{(i)}}\gamma^{i(n-2)}\leq (C(n, C_I, \Lambda, R)\epsilon)^i.$$ 
To each $d$--ball at scale $\gamma^i$ we repeat the covering of step 1 at that scale, thus we find a maximal cover by balls $\tilde B_R(x_f, \gamma^{i+1})$ such that $\tilde B_{R/2}(x_f, \gamma^{i+1})$ are pairwise disjoint and we separate these balls into categories. We first estimate $N_b^{(i+1)}$ which is the number of $b$--balls we get at the $(i+1)$--step from each $d$--ball. As in step 1, 
$\displaystyle\bigcup_{b=1}^{N_b^{(i+1)}}\tilde B_{R/2}(x_b^{(i+1)}, \gamma^{i+1}) \subset \tilde B_{R(2+\gamma)}(x_d^{(i)}, \gamma^i)$ so from \eqref{non-coll}, \eqref{volume comp} and \eqref{non-inf} we have that 
$$\gamma^{ni+n}v_1 N_b^{(i+1)} \leq C(n) R^n (2+\gamma)^n v_2 \gamma^{ni},$$ 
which implies that $N_b^{(i+1)} \leq C(n, C_I, \Lambda, R, \gamma)$. Thus $$\sum_{b=1}^{N_b^{(i+1)}}\gamma^{(i+1)(n-2)} \leq C(n, C_I, \Lambda, R,  \gamma)\gamma^{i(n-2)}$$ and since we do this to $N_d^{(i)}$--many balls summing over them and using the inductive estimate we have that the $(n-2)$--content of the new $b$--balls is bounded by $ C(n, C_I, \Lambda, R,  \gamma) (C(n, C_I, \Lambda, R)\epsilon)^{i}$. Put together after $(i+1)$--steps we have
$$\sum_{j=1}^{i+1} \sum_{b=1}^{N_b^{(j)}} \gamma^{j(n-2)} \leq C(n, C_I, \Lambda, R, \gamma) \sum_{j=0}^i (C(n, C_I, \Lambda, R)\epsilon)^j.$$
With the exact same reasoning we estimate the $(n-2)$--content of $e$--balls. 
In order to estimate the content of $c$--balls we repeat the argument of step 1. So we can find $y_c^{(i+1)}$ so that $\tilde B_{2R}(y_c^{(i+1)}, \gamma^{i+1})\subset \tilde B_{4R, \gamma^{i+1}}(\mathcal P_{\gamma^{i}, \xi, 4R}(x_d^{(i)})).$ 
The set $I_c= \{ l \in\{1, \dots, N_c^{(i+1)}\}\mid \tilde B_{2R}(y_c^{(i+1)}, \gamma^{i+1})\cap \tilde B_{2R}(y_l ^{(i+1)}, \gamma^{i+1})\neq \emptyset\}$ satisfies $v_1 \gamma^{ni+n} \sharp I_c \leq 9^n R^n \gamma^{ni+n}v_2$ hence we get that 
$$\sum_{c=1}^{N_c^{(i+1)}} \gamma^{(i+1)(n-2)}\leq C(n, C_I, \Lambda, R)\epsilon \gamma^{(n-2)i}.$$
Since we do this to $N_d^{(i)}$--many balls the content of the new $c$--balls is bounded by $(C(n, C_I, \Lambda, R)\epsilon)^{i+1}$. Put together we have the content estimate for $c$--balls 
$$\sum_{j=1}^{i+1}\sum_{c=1}^{N_c^{(j)}} \gamma^{j(n-2)} \leq \sum_{j=1}^{i+1}(C(n, C_I, \Lambda, R)\epsilon)^j.$$ 
The exact same reasoning is used for the content of $d$--balls at the $(i+1)$--step, thus
$$\sum_{d=1}^{N_d^{(i+1)}} \gamma^{(n-2)(i+1)}\leq (C(n, C_I, \Lambda, R)\epsilon)^{i+1}.$$ 
This completes the induction. 

We now claim that after finitely many steps there will be no $d$--balls left. Assume that we can take $i \to \infty$ and $x_i \in \mathcal P_{\gamma^{i}, \xi, 4R}(x_d^{(i)})$. Then for $ g_i(t) = \gamma^{-2i}g(\gamma^{2i}t)$ since the flow is smooth we have that 
$$(M,  g_i(t), x_i) \to (\mathbb R^n, g_{Eucl.}(t), x_\infty),$$
and $x_\infty \in S_{point}$ since every point of the Gaussian soliton belongs to the point spine. Then from the conjugate heat kernel bounds of Lemma \ref{properties}\eqref{2}, combined with \cite[Proposition 4.3]{PanGian}, we have that the conjugate heat flows $\nu_{(x_i,0)} \to \nu_\infty \in \mathcal S$. From Theorem \ref{compactness}\eqref{III} we get that $\mathcal W_{x_i}^{g_i}(|t|) \to \mu(g_{Eucl})=0$, where $\mathcal W_{x_i}^{g_i}$ is used to denote the pointed entropy with respect to the metric $g_i.$ Since $\mathcal W_{x_i}^{g_i}(|t|) =\mathcal  W_{x_i}(\gamma^{2i}|t|) $ we conclude that 
$$\overbar W + \xi > \mathcal W_{x_i}(\gamma^{2i}\xi) = \mathcal W_{x_i}^{g_i}(\xi) > -\epsilon',$$ 
for any $\epsilon' >0$. Thus $\overbar W + \xi \geq 0$ and since from Perelman's Harnack inequality we have that $\mathcal W_x(|t|)\leq 0$ at any scale and any point we conclude that $\mathcal W_x(\xi) < \overbar W +\xi  $ for any $x \in \tilde B_{2R}(p,1)$ so $\tilde B_{2R}(p,1) \subset \mathcal P_{1,\xi, 4R}(p)$. From the non--collapsing estimate \eqref{non-coll} and the initial assumption \eqref{labeled} we conclude that $v_1 \leq \epsilon \gamma^2 < \epsilon$ and the latter is a contradiction if $\epsilon < \frac{v_1}{2}.$ 

In conclusion, after finitely many steps there will be no $d$--balls left. If $\epsilon < \frac{1}{2C(n, C_I, \Lambda, R)}$ we have that the series on the estimates converge which concludes the proof.  
\end{proof}

\begin{REMARK} If we have instead an initial ball $\tilde B_R(x, r)$ that satisfies the volume estimate  $\vol_{g(-\gamma^2r^2)}\tilde B_{4R, r\gamma}(\mathcal P_{r,\xi, 4R}(x)) < \epsilon \gamma^2 r^n$ with $\overbar W \leq \displaystyle \inf_{\tilde B_{2R}(x,r)}\mathcal{W}_y(r^2\xi^{-1})$ then rescaling the metric to $\tilde g$ we get the set $\mathcal{P}^{\tilde g}_{1, \xi, R}(p)$ with $\overbar W \leq \inf_{\tilde B_{2R}^{\tilde g}(p,1)}\mathcal{W}^{\tilde g}_y(\xi^{-1})$ that satisfies \eqref{labeled}. Thus we may apply Lemma \ref{d-ball dec} and after scaling back get a covering for $\tilde B_R (x,r)$ together with the rescaled estimates. 
    
\end{REMARK}

\noindent
\inlinesubsection{Necks of maximal symmetry on type--I Ricci flows} We introduce the notion of an $(n-2, \delta)$--neck, similar to the one in \cite{JiangNaber}, \cite{CJN}, see also \cite{PanGian2}. Recall that an $(n-2)$--self--similar flow is of the form $\mathbb R ^{n-2}\times M$, where $M$ is a gradient shrinking surface. If we further assume that $M$ is orientable then from \cite{Ham} we have that the flow is either $\mathbb R ^{n}$ or $\mathbb R ^{n-2}\times \mathbb S^2$. Note that in any case the point spine is the entire flow. With this in mind we give the following:

\begin{DEFINITION} \textup{(necks of maximal symmetry)} \label{def:necks_ms} Let $(M, g(t), p)_{[-2\delta^{-3}s^2, 0]} \in \mathcal F(n, C_I, \Lambda)$ and fix $R>0$ and $\tau = 10^{-10C_I-10n}$. A closed set $\mathcal{C} \subset \tilde B_{2R}(p,s)$ with $p \in \mathcal{C}$ together with a radius function $r: \mathcal{C} \to \mathbb R_{>0}$ is called an $(n-2, \delta)$--neck at scale $s$ in $\tilde B_{2R}(p,s)$ if for any $x \in \mathcal{C}$ we have the following properties 

\begin{itemize}
    \item [n1)] The balls $\tilde B_{\tau^2 R}(x, r_x)$ are pairwise disjoint.
    \item[n2)]\label{n2} $\mathcal W_x(\delta r_x^2) - \mathcal W_x(\delta^{-1}s^2)< \delta.$ 
    \item[n3)]For any $r \in [r_x, \delta^{-1}s]$ the pointed flow $(M, g(t), x)_{[-2\delta^{-2}r^2,0]}$ is $(n-2, \delta^2)$--self--similar at scale $r$ with $\mathbb R ^{n-2}\times \mathbb S^2$ a model flow.  
    \item[n4)] For any $r \in [r_x, s]$ such that $\tilde B_{2R}(x,r) \subset \tilde B_{2R}(p,s)$ we have that $\tilde B_{R}(x,r)\subset \tilde B_{\tau R, r}(\mathcal{C}).$
    \item[n5)] $r_y \leq \sigma r_x + \delta D_{2R}(x,y)$ for any $x, y \in \mathcal{C}$ and some $\sigma \in (1,2).$
\end{itemize}
    
\end{DEFINITION}
\begin{REMARK}\begin{enumerate}
    \item [1)] The radius function is assumed to be strictly positive since the flow is smooth at $t=0$. In fact, if $r_{x_i}\to 0$ as $i \to \infty$ for some $x_i \in \mathcal{C}$ then rescaling the flow would contradict the smoothness up to $t=0$, due to $n3)$. This, together with the property $n1)$, implies that $\mathcal{C}$ is indeed a finite set. To see the latter, if $\mathcal{C}$ was infinite then let a sequence $x_i$. Since $M$ is compact $x_i$ has, up to a subsequence, a limit. So $x_i$ is a Cauchy sequence and $\tilde B_{\tau^2 R }(x_i, r_0)\cap \tilde B_{\tau^2R}({x_j, r_0)}\neq \emptyset$, where $r_0 = \inf_{\mathcal{C}} r_x>0$ and this contradicts property $n1).$
    
\item [2)] 
Condition $n5)$ will be called $2R$--scale $(\sigma, \delta)$--Lipschitz.
\item[3)] A closed set $\mathcal{C} \subset U$, where $U$ an open subset of $M$, will be called a weak $(n-2, \delta)$--neck in $U$ if it satisfies conditions $n1)-n3)$. 
\item[4)] An $(n-2, \delta)$--neck $(\mathcal C, r_x)$ is also an $(n-2, \delta, \eta)$--neck in the sense of \cite{PanGian2}. Clearly properties $n1), n2), n5)$ are the same. From property $n3)$ we have that for any $x \in \mathcal{C}$ if $\eta\leq \eta(n)$ and $\delta\leq \delta(\eta)$ the flow $(M, g(t), x)_{[-2\eta^{-1},0]}$ is not $(n,\eta)$--self--similar at scale $r \in[r_x,\delta^{-1}s].$  Thus property $n3)$ is the same. Furthermore, since for the model flow the point spine is the entire flow it follows that if $\delta \leq \delta(R)$ then $\mathcal L_{x,r}\cap \tilde B_{R}(x,r) = \tilde B_{R}(x,r)$ for any $x \in \mathcal C$ thus property $n4)$ is also the same as in \cite{PanGian2}.

\end{enumerate} 
    
\end{REMARK}

\noindent
Given an $(n-2, \delta)$--neck at scale 1 we associate to it a packing measure $\displaystyle\mu = R^{n-2} \sum_{x \in \mathcal{C}} r_x^{n-2}\delta_x$, where $\delta_x$ is the Dirac measure at the point $x.$ We recall the main result proven in \cite{PanGian2}.

\begin{theorem}\textup{(neck structure theorem)}\label{neck structure} Let $(M, g(t), p)_{[-2\delta^{-3},0]} \in \mathcal F (n, C_I, \Lambda)$ and $(\mathcal{C}, r_x)$ an $(n-2, \delta)$--neck at scale 1 in $\tilde B_{2R}(p,1)$. Then if $\delta \leq \delta(n, C_I, \Lambda, R)$ for any $x \in \mathcal{C}$ and $r\in [r_x, 1]$ such that $\tilde B_{2R}(x,r)\subset \tilde B_{2R}(p,1)$ we have that 
$$A^{-1} R^{n-2}r^{n-2} \leq \mu(\tilde B_R(x,r))\leq A R^{n-2}r^{n-2},$$ 
    where $A$ a constant that depends on $n$ and $C_I.$ 
\end{theorem}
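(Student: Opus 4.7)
The plan is to establish the two-sided Ahlfors-type bound by proving the upper and lower inequalities separately, using distinct combinations of the neck axioms $n1)$--$n5)$.

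For the upper bound, I would fix $x\in\mathcal{C}$ and $r\in[r_x,1]$ with $\tilde B_{2R}(x,r)\subset\tilde B_{2R}(p,1)$. Property $n3)$ applied to $x$ at scale $r$ yields a diffeomorphism $F$ from a large ball in the cylindrical model flow $\mathbb R^{n-2}\times\mathbb S^2$ onto a neighborhood of $x$, with $F^\star(r^{-2}g(-r^2))$ being $\delta^2$-close to the cylinder metric. Under this identification, any $y\in\mathcal{C}\cap\tilde B_R(x,r)$ corresponds to a model point whose Euclidean projection lies in a ball of radius $\sim Rr$ in $\mathbb R^{n-2}$. Property $n3)$ applied at $y$ at its own scale $r_y$ shows that, modulo the small $\delta$-error, the ball $\tilde B_{\tau^2 R}(y,r_y)$ has a Euclidean projection of $(n-2)$-content at least $c(n,C_I)(\tau^2 R\, r_y)^{n-2}$, and these projections are essentially disjoint by property $n1)$. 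Summing over $y\in\mathcal{C}\cap\tilde B_R(x,r)$ and comparing with the total Euclidean $(n-2)$-content $\sim (Rr)^{n-2}$ of the target gives the upper estimate with $A$ depending on $n$ and $C_I$ (through $\tau$).

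For the lower bound I would use property $n4)$ together with the Lipschitz property $n5)$. Property $n4)$ forces $\mathcal{C}$ to be $\tau R r$-dense in $\tilde B_R(x,r)$ at scale $r$, so a maximal Vitali extraction produces a subcollection $\{z_i\}\subset\mathcal{C}\cap\tilde B_R(x,r)$ whose $\tilde B_{\tau^2 R}(z_i,r)$-balls are pairwise disjoint and whose $\tilde B_{\tau R}(z_i,r)$-enlargements cover $\tilde B_R(x,r)$. In the cylindrical model centered at each $z_i$ every such ball has volume of order $(\tau^2 R)^{n-2}r^n$, and the non-collapsing and non-inflating estimates transfer this bound to $(M,g(-r^2))$; comparing with the volume of $\tilde B_R(x,r)$ forces the count of the $z_i$ to be at least $c(n,C_I)R^{n-2}$, whence $\mu(\tilde B_R(x,r))\geq cR^{n-2}r^{n-2}$ once the radii $r_{z_i}$ are shown to be comparable to $r$.

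The main obstacle is the last step above: ruling out a ``radius collapse'', i.e.\ ensuring that the extracted neck points $z_i$ really do carry radii $r_{z_i}$ comparable to $r$. Here I would invoke property $n2)$ to prevent the pointed entropy $\mathcal{W}_{z_i}$ from dropping significantly between scales $r_{z_i}$ and $r$, combined with the almost-self-similarity at every intermediate scale guaranteed by $n3)$ and the mild Lipschitz constant $\sigma<2$ from $n5)$. Any putative collapse $r_{z_i}\ll r$ would, via iterated application of $n3)$ through the dyadic scales between $r_{z_i}$ and $r$, force an entropy drop exceeding $\delta$, contradicting $n2)$. Once the radii of the selected points are quantitatively comparable to $r$, the packing count $\sum_i r_{z_i}^{n-2}\geq c(n,C_I) R^{n-2}r^{n-2}$ closes the argument.
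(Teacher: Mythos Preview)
The paper does \emph{not} prove this theorem; it is stated with the preamble ``We recall the main result proven in \cite{PanGian2}'' and used as a black box. There is therefore no proof in the present paper to compare against. What can be said is that \cite{PanGian2} builds on the splitting--map theory developed in the companion paper \cite{PanGian}, so the actual argument almost certainly proceeds via sharp estimates for almost--splitting maps and an inductive scheme over scales, in the spirit of \cite{JiangNaber,CJN}, rather than via a direct packing/covering argument.

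Your lower--bound outline has a genuine gap. You attempt to rule out ``radius collapse'' (i.e.\ $r_{z_i}\ll r$) by arguing that self--similarity at every intermediate scale from $n3)$ would force an entropy drop violating $n2)$. But $n2)$ and $n3)$ are perfectly compatible with arbitrarily small $r_y$: a neck point $y$ can legitimately have $r_y$ many orders of magnitude below $r$, with the flow $(n-2,\delta^2)$--self--similar at every scale in $[r_y,\delta^{-1}]$ and with entropy pinched to within $\delta$ across that whole range. Indeed this is exactly what happens along the spine of the shrinking cylinder, and it is the generic situation in the construction of Lemma~\ref{maximal neck}. The Lipschitz bound $n5)$ only gives $r_y\leq \sigma r_x+\delta D_{2R}(x,y)$, which is an \emph{upper} bound, and the symmetric inequality $r_x\leq \sigma r_y+\delta D_{2R}(x,y)$ gives no useful lower bound on $r_y$ once $D_{2R}(x,y)\sim r$ and $r_x$ itself may be small. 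Consequently the extracted Vitali points $z_i$ can all have $r_{z_i}\ll r$, and your count of $\sim R^{n-2}$ such points does not by itself yield $\sum_i r_{z_i}^{n-2}\gtrsim r^{n-2}$. The correct lower bound is obtained not by preventing collapse but by \emph{summing contributions across all scales}: one constructs (via the splitting maps of \cite{PanGian}) a map to $\mathbb R^{n-2}$ which is almost bi--Lipschitz on $\mathcal C$, so that the image of $\mathcal C\cap\tilde B_R(x,r)$ has $(n-2)$--content comparable to $(Rr)^{n-2}$, and the packing measure is comparable to this image content.

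Your upper--bound sketch also needs more care. The diffeomorphism $F$ from $n3)$ at scale $r$ gives a projection to $\mathbb R^{n-2}$, but disjointness of the balls $\tilde B_{\tau^2 R}(y,r_y)$ in $M$ (from $n1)$) does not imply disjointness of their Euclidean projections: distinct points on the same $\mathbb S^2$ fibre project to the same point. One needs additional information---again typically furnished by a splitting map with controlled degeneration across scales---to conclude that the projections are essentially disjoint. In \cite{JiangNaber,CJN} and presumably in \cite{PanGian2} this is handled by the same bi--Lipschitz control that gives the lower bound.
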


\noindent
We now prove the following useful lemmas.     

\begin{LEMMA}\textup{(higher order symmetries)}\label{higher order sym} Let $\gamma, \delta' >0$ and $(M, g(t), x)_{[-2\xi''^{-1}r^2,0]} \in \mathcal F(n, C_I , \Lambda)$ be a $(0,\xi'')$--self--similar flow at scale $r \in [\gamma, \delta'^{-1}]$ and $(M, g(t),p)_{[-2\xi''^{-1},0]}$ be an $(n-2, \xi''^2)$--self--similar flow at scale 1 with model flow $\mathbb R ^{n-2}\times \mathbb S^2$ and $x \in \tilde B_{R_0}(p,\max \{1,r\}).$ Then if $\xi'' \leq \xi''(n, C_I, \Lambda, R_0,\gamma, \delta')$ we have that $(M, g(t), x)_{[-2\delta'^{-1}r^2,0]}$ is $(n-2,\delta'^2)$--self similar at scale $r$ with model flow $\mathbb R ^{n-2}\times \mathbb S^2.$ 
    
\end{LEMMA}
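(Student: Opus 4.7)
The plan is to argue by contradiction and compactness, in the spirit of Lemma \ref{s-s/e-d} and the concluding step of Theorem \ref{q.split}. Suppose the conclusion fails: then along a sequence $\xi''_j\to 0$ one can find pointed flows $(M_j,g_j(t),x_j,p_j)$ satisfying the three hypotheses of the lemma at scales $r_j\in[\gamma,\delta'^{-1}]$, yet such that $(M_j,g_j(t),x_j)_{[-2\delta'^{-1}r_j^2,0]}$ fails to be $(n-2,\delta'^2)$-self-similar at scale $r_j$ with cylindrical model. After passing to a subsequence I may assume $r_j\to r_\infty\in[\gamma,\delta'^{-1}]$.

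Then I would apply Theorem \ref{compactness}\eqref{II} to the $(n-2,\xi''^{2}_j)$-self-similar flows $(M_j,g_j(t),p_j)$ to extract a pointed Cheeger--Gromov--Hamilton limit $(\mathbb R^{n-2}\times\mathbb S^2,g_{cyl}(t),p_\infty)$, the shrinking cylinder flow at scale $1$. Since $x_j\in\tilde B_{R_0}(p_j,\max\{1,r_j\})$, the distance $d_{g_j(-\max\{1,r_j\}^2)}(x_j,p_j)$ is bounded by $R_0\max\{1,\delta'^{-1}\}$, and the distortion bounds in Lemma \ref{properties} yield a uniform bound on $d_{g_j(-1)}(x_j,p_j)$. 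After passing to a further subsequence I obtain $x_j\to x_\infty\in\mathbb R^{n-2}\times\mathbb S^2$. Next, I would consider the rescaled flows $\tilde g_j(t)=r_j^{-2}g_j(r_j^2 t)\in\mathcal F(n,C_I,\Lambda)$ pointed at $x_j$; by continuity of rescaling under pointed Cheeger--Gromov--Hamilton convergence, they converge to $(\mathbb R^{n-2}\times\mathbb S^2,r_\infty^{-2}g_{cyl}(r_\infty^2 t),x_\infty)$, which is isometric to the shrinking cylinder itself by the self-similarity of the cylinder soliton.

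Since every point of $\mathbb R^{n-2}\times\mathbb S^2$ lies in $S_{point}$, it should then follow that for $j$ large enough the rescaled flow $(M_j,\tilde g_j(t),x_j)$ meets the definition of $(n-2,\delta'^2)$-self-similarity at scale $1$ with cylindrical model, equivalently $(M_j,g_j(t),x_j)$ is $(n-2,\delta'^2)$-self-similar at scale $r_j$, contradicting the choice of the sequence. The main point to be careful about is the role of the $(0,\xi''_j)$-self-similarity hypothesis at $x_j$: together with the conjugate heat kernel bounds of Lemma \ref{properties}\eqref{2} and Theorem \ref{compactness}\eqref{II}, this is what guarantees that the rescaled limit at $x_\infty$ is induced by a shrinking soliton and that the rescaled pointed entropies converge, so that compatibility with the ambient cylindrical limit at $p_\infty$ can identify this soliton with the cylinder itself. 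Because $r_j$ is confined to the compact interval $[\gamma,\delta'^{-1}]$, rescaling at $x_j$ is a benign operation that commutes with taking the Cheeger--Gromov--Hamilton limit, so no blow-up issues arise.
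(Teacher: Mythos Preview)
Your argument is correct and follows essentially the same contradiction--compactness route as the paper's proof: negate the conclusion along $\xi''_j\to 0$, use Theorem \ref{compactness}\eqref{II} on both pointed flows, observe that since $x_j$ stays at bounded $R$--scale distance from $p_j$ the two limits coincide as the shrinking cylinder $\mathbb R^{n-2}\times\mathbb S^2$, and then derive the contradiction from the fact that every point of the cylinder lies in $S_{point}$. Your treatment is in fact more detailed than the paper's (you pass to a subsequence in $r_j$ and spell out the distance bound from Lemma \ref{properties} and the rescaling step), whereas the paper compresses all of this into a single sentence; the only minor remark is that your final paragraph slightly overstates the role of the $(0,\xi''_j)$--self--similarity at $x_j$---once the ambient limit at $p_\infty$ is already identified as the cylinder and $x_j\to x_\infty$ therein, the soliton structure at $x_\infty$ is inherited automatically and the entropy convergence is not strictly needed for the contradiction.
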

\begin{proof} Assume that we can find $\xi''_j \to 0$ such that $(M_j, g_j(t), x_j)_{[-2\xi''^{-1}_j r^2,0]}$ is $(0,\xi''_j)$--self--similar at scale $r$, $(M_j, g_j(t), p_j)_{[-2\xi''^{-2}_j,0]}$ is $(n-2, \xi''_j{^2})$--self--similar at scale 1 with model flow $\mathbb R ^{n-2}\times \mathbb S^2$, $x_j \in \tilde B_{R_0}(p_j, \max\{1,r\})$  and $(M_j, g_j(t), x_j)_{[-2\xi''^{-1}_j,0]}$ is not $(n-2, \delta'^2)$--self--similar at scale $r.$ 

Then if $\xi''_j \leq \xi''_j(n, C_I, \Lambda, R_0, \gamma, \delta')$ we have as $j \to \infty$, from Theorem \ref{compactness}\eqref{II}, that the pointed flows converge to the same soliton which must be $\mathbb R ^{n-2}\times \mathbb S^2$ thus arriving at a contradiction. 
\end{proof}

\begin{LEMMA}\textup{(curvature radius on necks)}\label{curv radius} If $(\mathcal{C},r_x)$ is a weak $(n-2, \delta)$--neck at scale 1 in an open set $U$, where $\tilde B_{2R}(p,1) \subset U,$ then for any $x \in \mathcal{C}$ and $\delta \leq \delta(n)$ we have that $ r_{Rm}^{2R}(x) \leq r_x.$
    
\end{LEMMA}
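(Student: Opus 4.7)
The plan is to argue by contradiction. Suppose $r_{Rm}^{2R}(x) > r_x$, so by the definition of the curvature radius as a supremum we can pick $r > r_x$ satisfying
\[
|Rm|_{g(t)}(y) \leq r^{-2} \leq r_x^{-2}
\]
throughout $\tilde B_{2R}(x,r) \times [-r^2,0]$. In particular, taking $y = x$ and $t_0 = -\delta^2 r_x^2 \in [-r^2,0]$ (the latter holds since $\delta < 1 < r/r_x$) yields the upper bound $|Rm|_{g(t_0)}(x) \leq r_x^{-2}$.

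The reverse inequality will come from property n3): with the choice $r = r_x$, the pointed flow $(M, g(t), x)_{[-2\delta^{-2}r_x^2, 0]}$ is $(n-2, \delta^2)$--self--similar at scale $r_x$, with model flow the shrinking cylinder $(\mathbb R^{n-2} \times \mathbb S^2, \tilde g(t))_{(-\infty,0)}$. Rescaling to $g_{r_x}(t) = r_x^{-2} g(r_x^2 t)$, the definition of almost self--similarity provides a diffeomorphism $F$ onto a neighborhood of $x$ with $F(\tilde x) = x$ and
\[
\|F^\star g_{r_x} - \tilde g\|_{C^l} < \delta^2
\]
on $B_{\tilde g(-1)}(\tilde x, \delta^{-2}) \times [-\delta^{-2}, -\delta^2]$, for any $l < \delta^{-2}$. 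At scaled time $t = -\delta^2$ the $\mathbb S^2$--factor of the cylinder has sectional curvature $\frac{1}{2\delta^2}$, and since the $\mathbb R^{n-2}$--factor is flat one has $|Rm|_{\tilde g(-\delta^2)}(\tilde x) = \frac{1}{2\delta^2}$. The $C^l$--closeness (say with $l\geq 2$, which is ensured by $\delta$ small) then forces
\[
|Rm|_{g_{r_x}(-\delta^2)}(x) \geq \frac{1}{2\delta^2} - C(n)\delta^2 \geq \frac{1}{4\delta^2},
\]
provided $\delta \leq \delta(n)$. Rescaling back, $|Rm|_{g(t_0)}(x) \geq \frac{1}{4\delta^2 r_x^2} > r_x^{-2}$, contradicting the upper bound.

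The argument is essentially forced once the curvature of the shrinking cylinder is correctly identified at the scaled time $-\delta^2$; the only technical check is that the $C^l$--closeness of $F^\star g_{r_x}$ to $\tilde g$ gives pointwise control of the curvature tensor at $x$ at the boundary time $-\delta^2$ of the closeness interval, and this is immediate for $l\geq 2$. Observe that only property n3) is invoked, in keeping with the hypothesis that $(\mathcal C, r_x)$ is only a \emph{weak} neck; properties n1), n4), n5) play no role.
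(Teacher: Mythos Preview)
Your proof is correct and follows essentially the same approach as the paper: both use property n3) at scale $r_x$ to compare with the shrinking cylinder at the time $t_0=-\delta^2 r_x^2$, where the curvature of the model is of order $(\delta r_x)^{-2}\gg r_x^{-2}$, contradicting the curvature bound implied by $r_{Rm}^{2R}(x)>r_x$. The only cosmetic difference is that the paper estimates the scalar curvature (and hence $|Rm|$) while you estimate $|Rm|$ directly, and the paper phrases the argument as directly exhibiting a large-curvature point rather than as a contradiction.
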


\begin{proof} Let $x \in \mathcal{C}$, $r_x>0$. We want to find $t_0 \in [-r^2_x, 0]$ and $x_0 \in \tilde B_{2R} (x, r_x)$ such that $|Rm|_{g(t_0)}(x_0) > \frac{1}{r^2_x.}$ From property $n3)$ there exists a map $F : B_{g_s(-r^2_x)}(\tilde x, \delta^{-2}r_x) \to M$ diffeomorphism to its image,  $B_{g_s(-r^2_x)}(\tilde x, \delta^{-2}r_x) \subset \mathbb R^{n-2}\times \mathbb S^2$ the shrinking cylinder and $||F^* g - g_s|| < \delta^2 r^2_x$  in  $B_{g_s(-r^2_x)}(\tilde x, \delta^{-2}r_x)\times [-\delta^{-2}r^2_x, -\delta^2r_x^2].$ 

Thus for the scalar curvature we have that $|\text{Scal}_{g(-\delta^2 r_x^2)}(x)| \geq \text{Scal}_{g_s(-\delta^{2}r_x^2)}(\tilde x) - \delta^2r^2_x$. Since $\tilde x \in S_{\text{point}}$ we have that $\text{Scal}_{g_s(-\delta^{2}r_x^2)}(\tilde x) = \text{Scal}_{g_s(-1)}(\tilde x)\delta^{-2}r_x^{-2}$. So we get that 
$$|\text{Scal}_{g(-\delta^2r_x^2)}(x)| > \dfrac{C(n) - \delta^4r_x^2}{\delta^2r_x^2} > \frac{C(n)}{2\delta^2 r_x^2},$$
provided that $\delta \leq \delta(n)$ and this suffices to prove the result.
\end{proof}

\begin{LEMMA}\textup{(dichotomy lemma)} \label{dicho} Let $(M, g(t), p)_{[-2\xi^{-1},0]} \in \mathcal{F}(n, C_I, \Lambda)$ and $M$ orientable. Then for any $\epsilon>0, \eta>0$, $R \geq R(n, C_I, \Lambda)$, $\gamma\leq \gamma(n, C_I, \Lambda, R, \epsilon)$, $\delta\leq \delta(n, C_I, R, \eta)$, $\xi \leq \xi(n, C_I, \Lambda, R, \gamma, \delta, \eta, \epsilon)$ such that  
$$\vol_{g(-\gamma^2)} \tilde B_{4R, \gamma}(\mathcal P_{1, \xi, 4R}(p)) \geq \epsilon \gamma^2,$$ 
where $\overbar W \leq\inf_{\tilde B_{8R}(p,1)}\mathcal W_y(\xi^{-1}),$ either $(M, g(t), p)_{[-2\delta^{-2},0]}$ is $(n-2, \delta^2)$--self--similar at scale 1 with model flow $\mathbb R ^{n-2}\times \mathbb S^2$ or the curvature radius satisfies $ r_{Rm}^{2R}(p) > \eta^{-1}.$  
    
\end{LEMMA}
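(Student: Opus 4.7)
The plan is to reduce to the quantitative splitting Theorem \ref{q.split} and then perform a case analysis on the model flow it produces. Applying Theorem \ref{q.split} with $R$ replaced by $4R$ and $k = n-2$ (the hypotheses translate directly: $\tilde B_{2 \cdot 4R}(p,1) = \tilde B_{8R}(p,1)$ matches the $\overbar W$ assumption, and the volume assumption is precisely $\vol_{g(-\gamma^2)} \tilde B_{4R, \gamma}(\mathcal P_{1, \xi, 4R}(p)) \geq \epsilon \gamma^{n-(n-2)}$) produces, for any auxiliary $\delta' > 0$ we wish to fix, a point $q \in \tilde B_{4R}(p, 1)$ such that $(M, g(t), q)_{[-2\delta'^{-2}, 0]}$ is $(n-2, \delta'^2)$-self-similar at scale $1$, with model flow $M' \times \mathbb R^{n-2}$ for some complete $2$-dimensional gradient shrinking soliton $M'$. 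Orientability of $M$ is transferred to $M'$ via the approximating diffeomorphism, so by Hamilton's classification either $M' = \mathbb S^2$ or $M' = \mathbb R^2$.

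In the first case, $M' = \mathbb S^2$, I would upgrade the self-similarity from $q$ to $p$ with the same model. Since the point spine of $\mathbb R^{n-2} \times \mathbb S^2$ is the entire manifold, the preimage of $p$ under the approximating diffeomorphism automatically lies in $S_{point}$, so a recentering at $p$ and a standard contradiction/compactness argument (in the spirit of Lemma \ref{higher order sym}) show that for $\delta'$ sufficiently small in terms of $\delta, R, n, C_I, \Lambda$, the pointed flow $(M, g(t), p)_{[-2\delta^{-2}, 0]}$ is itself $(n-2, \delta^2)$-self-similar at scale $1$ with model $\mathbb R^{n-2} \times \mathbb S^2$. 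This yields the first alternative of the dichotomy. The required entropy convergence comes from Theorem \ref{compactness}\eqref{II}--\eqref{III} together with the monotonicity of pointed entropy.

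In the second case, $M' = \mathbb R^2$, the model is the Gaussian soliton on $\mathbb R^n$, so $q$ is in fact $(n, \delta'^2)$-self-similar, and hence $(n-1, \delta'^2)$-self-similar. Applying the $\eta$-regularity Theorem \ref{eta} with external radius $2R$ and threshold $\eta' \ll \eta$ produces $r_{Rm}^{2R}(q) > (\eta')^{-1}$, provided $\delta'$ is small in terms of $R$ and $\eta'$. It remains to transfer this bound to $p$: since $q \in \tilde B_{4R}(p, 1)$ gives $d_{g(-1)}(p, q) < 4R$, the almost triangle inequality of Lemma \ref{properties of D_R}\eqref{almost triangle} combined with the distance distortion estimates of Lemma \ref{properties}\eqref{4} shows that $\tilde B_{2R}(p, \eta^{-1}) \times [-\eta^{-2}, 0] \subset \tilde B_{2R}(q, (\eta')^{-1}) \times [-(\eta')^{-2}, 0]$ once $\eta'$ is chosen small enough relative to $\eta$ and $R$, so the curvature bound at $q$ passes to $p$ and gives $r_{Rm}^{2R}(p) > \eta^{-1}$. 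The main obstacle is the parameter bookkeeping in the first case: despite $q$ and $p$ being distinct base points and $q$ being only approximately self-similar, one must guarantee that the limiting compactness argument produces a model at $p$ that is exactly $\mathbb R^{n-2} \times \mathbb S^2$, rather than degenerating to the Gaussian $\mathbb R^n$ or the shrinking sphere. The fact that every point of $\mathbb R^{n-2} \times \mathbb S^2$ lies in the spine, together with the fact that the shifted conjugate heat flows based at $p_j$ have an entropy gap controlled by Theorem \ref{compactness}\eqref{III}, is exactly what rules this out.
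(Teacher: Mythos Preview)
Your proposal is correct and follows essentially the same approach as the paper: apply the quantitative splitting Theorem \ref{q.split} (with parameter $4R$) to obtain a point $q$ that is $(n-2,\delta'^2)$--self--similar, then use the fact that the point spine of either possible model ($\mathbb R^{n-2}\times\mathbb S^2$ or $\mathbb R^n$) is the entire manifold to transfer self--similarity to $p$, and finally invoke $\eta$--regularity in the Gaussian case.

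The only organizational difference is that the paper transfers self--similarity from $q$ to $p$ \emph{before} the case analysis: since in both cases the point spine is the whole model, one has $p\in\mathcal L_{q,1}$ automatically, so $(M,g(t),p)$ is $(n-2,\delta^2)$--self--similar and $\eta$--regularity can be applied directly at $p$ in the Gaussian case. This avoids your separate step of transferring the curvature radius bound from $q$ to $p$ via the inclusion $\tilde B_{2R}(p,\eta^{-1})\subset\tilde B_{2R}(q,(\eta')^{-1})$. Your final paragraph also overstates the difficulty: once one observes that the spine is the entire model in both cases, the transfer to $p$ is immediate and there is no risk of the model at $p$ degenerating, so no delicate compactness or entropy--gap argument is needed beyond the definition of $(k,\delta)$--self--similarity.
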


\begin{proof} We may fix the constants as in Theorem \ref{q.split} to get that there exists $q \in \tilde B_{2R}(p,1)$ such that $(M, g(t), q)_{[-2\delta^{-4},0]}$ is $(n-2, \delta^4)$--self--similar at scale 1. Since $\tilde B_{4R}(q,1) \subset \mathcal{L}_{q,1}$ we conclude that $p \in \mathcal{L}_{q,1}$ thus $(M,g(t),p)_{[-2\delta^{-2},0]}$ is $(n-2,\delta^2)$--self--similar at scale 1. If the model flow is $\mathbb R ^{n-2}\times \mathbb S^2$ then we have nothing to prove. Else the flow is $(n, \delta^2)$--self--similar at scale 1 so if $\delta \leq \delta(n, C_I, R| \eta)$ from $\eta$--regularity we get that $ r_{Rm}^{2R}(p) > \eta^{-1}$ which proves the dichotomy. 
\end{proof}

We will decompose a $c$--ball $\tilde B_{2R}(p,1)$ in two steps. First in Lemma \ref{maximal neck}, using Lemma \ref{dicho} and Lemma \ref{curv radius}, we initially cover the ball by balls of different types, except $b$--balls, so that the collection of the centers form a weak neck. We then recursively apply the same covering to the remaining $c$--balls until none are left (due to the smoothness of the flow this will happen after finitely many steps). In that way we get a maximal weak neck in $\tilde B_{4R}(p,1)$ that contains only $\tilde d$--points. However, property $n5)$ and hence property $n4)$ fail to hold. 

To fix this we will redefine the radius function and the weak neck so that in Lemma \ref{neck construction/estimates} we get an $(n-2,\delta)$--neck. The latter may contain $c$--points but since the weak neck is maximal an application of the neck structure theorem, Theorem \ref{neck structure}, shows that the content of the $c$--balls is small. 

\begin{LEMMA}\textup{(weak neck construction)}\label{maximal neck} Let $(M, g(t), p)_{[-2\xi^{-1},0]} \in \mathcal{F}(n,C_I, \Lambda),$ $M$ orientable, such that $ r_{Rm}^{2R}(p) \leq 2.$ For any $\epsilon>0,$ $R \geq R(n, C_I, \Lambda)$, $\delta'\leq \delta'(n, C_I, R)$, $\gamma \leq \gamma(n, C_I, \Lambda, R, \epsilon)$, $\xi\leq \xi(n, C_I, \Lambda, R,\delta', \gamma, \epsilon)$ if 
$$\vol_{g(-\gamma^2)}\tilde B_{4R,\gamma}(\mathcal P_{1, \xi, 4R}(p))\geq \epsilon \gamma^2,$$
   where $\overbar W \leq \inf_{\tilde B_{8R}(p,1)} \mathcal{W}_y(\xi^{-1})$, there exists $\mathcal{C}\subset \tilde B_{4R}(p,1)$ such that $(\mathcal{C}, r_x)$ is a weak $(n-2, \delta')$--neck in $\tilde B_{4R}(p,1)$ and 
    \begin{itemize}
        \item [i)] $\tilde B_{2R}(p,1)\subset \tilde B_{\tau R, r_x}(\mathcal{C}).$
        \item[ii)] For any $x \in \mathcal{C}$ we have that 
        $$\vol_{g(-\gamma^2 r_x^2)}\tilde B_{4R, \gamma r_x}(\mathcal P_{r_x, \xi, 4R}(x))< \epsilon \gamma^2 r_x^n.$$
    \end{itemize}
\end{LEMMA}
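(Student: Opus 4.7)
The plan is to construct $\mathcal C$ by an iterated refinement of a tree of covers at consecutive scales $\gamma^0, \gamma^1, \gamma^2, \ldots$ rooted at $p$, in the same spirit as the decomposition of Lemma \ref{d-ball dec}. At each step, any ball whose pinching tube volume is still large (i.e.\ of type $c$) is subdivided at the next scale, while every ball with small pinching tube volume (of type $d$, $\tilde d$, or $e$) is terminated and its center added to $\mathcal C$ with radius equal to the current scale. The essential addition compared with Lemma \ref{d-ball dec} is that along every branch of the tree one simultaneously propagates cylindrical $(n-2,\delta'^2)$-self-similarity at every intermediate scale, so that the final leaves automatically satisfy the weak neck conditions $n1)$--$n3)$.

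For the initialization, since $r^{2R}_{\mathrm{Rm}}(p) \leq 2$ and the pinching tube at $p$ has volume $\geq \epsilon\gamma^2$, the dichotomy lemma (Lemma \ref{dicho}) with $\eta$ chosen so that $\eta^{-1}>2$ forces $(M,g(t),p)_{[-2\delta'^{-2},0]}$ to be $(n-2,\delta'^2)$-self-similar at scale $1$ with model $\mathbb R^{n-2}\times\mathbb S^2$. Inductively, assume at step $i$ one has a family $\mathcal A_i$ of $c$-type centers at scale $\gamma^i$, each enjoying cylindrical self-similarity at all scales $\gamma^0,\ldots,\gamma^i$. For each $y\in\mathcal A_i$ pick a maximal $\tau^2 R$-separated cover of $\tilde B_R(y,\gamma^i)$ by balls $\tilde B_R(x_f,\gamma^{i+1})$ and run the classification of Lemma \ref{d-ball dec}. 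No $x_f$ can be of type $b$: a direct computation on $\mathbb R^{n-2}\times\mathbb S^2$ gives $|\mathrm{Rm}|(-\gamma^{2(i+1)})\sim\gamma^{-2(i+1)}/2$ near $x_f$, precluding $r^{2R}_{\mathrm{Rm}}(x_f)>2\gamma^{i+1}$. Centers of type $d$, $\tilde d$ or $e$ go into $\mathcal C_{i+1}$ with $r_{x_f}=\gamma^{i+1}$; centers of type $c$ enter $\mathcal A_{i+1}$. Lemma \ref{dicho} applied at $x_f$, combined with Lemma \ref{higher order sym} to interpolate across scales, propagates cylinder $(n-2,\delta'^2)$-self-similarity for every scale $r\in[\gamma^{i+1},\delta'^{-1}]$, yielding $n3)$ continuously along the branch.

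Termination of the recursion follows exactly as in the last paragraph of Lemma \ref{d-ball dec}: an infinite chain of $c$-balls would lead, after rescaling $g_i=\gamma^{-2i}g(\gamma^{2i}\cdot)$, to a Gaussian soliton limit with $\mu=0$, contradicting the pinching condition once $\xi$ is small. Setting $\mathcal C=\bigcup_i\mathcal C_i$, property $n1)$ follows from the $\tau^2R$-separation chosen at each step together with the fact that centers at different scales live in disjoint parent balls; property $n2)$ telescopes across the finite chain $\gamma^0,\ldots,r_x$, since on each scale the pointed entropy $\mathcal W_x$ is $O(\delta'^2)$-close to $\mu(\mathbb R^{n-2}\times\mathbb S^2)$ by the continuity part of Theorem \ref{compactness}, making the total entropy drop controllably small; $n3)$ was already arranged inductively. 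Property (i) is inherited from the covering property at every step, and property (ii) is the defining condition of a leaf.

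The main obstacle is the continuous version of $n3)$: upgrading the discretized cylindrical self-similarity at scales $\gamma^j$ to self-similarity at every scale $r\in[r_x,\delta'^{-1}]$ with the \emph{correct} model $\mathbb R^{n-2}\times\mathbb S^2$. For this one combines Lemma \ref{higher order sym}, which turns an intermediate $(0,\xi'')$-self-similarity into full cylinder $(n-2,\delta'^2)$-self-similarity whenever ambient cylinder symmetry is present at a neighbouring scale, with a compactness-continuity argument driven by the monotonicity of $\mathcal W_x$. This is what dictates the precise hierarchy $R$ first, then $\delta'$, then $\gamma$, then $\xi$ stated in the lemma, and must be checked so that the cumulative error does not grow across the $O(\log_\gamma r_x)$ scales traversed along each branch.
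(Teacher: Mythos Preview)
Your overall iterative scheme matches the paper's, but there are two genuine gaps.

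First, the termination argument you invoke is the wrong one. The last paragraph of Lemma~\ref{d-ball dec} works only because there the pinching tube has \emph{small} volume: the Gaussian limit forces $\overbar W+\xi\geq 0$, hence $\tilde B_{2R}(p,1)\subset\mathcal P_{1,\xi,4R}(p)$, contradicting the small volume hypothesis. Here the pinching tube volume is \emph{large}, so that argument yields no contradiction. The paper's termination is different and simpler: by Lemma~\ref{curv radius} every $c$-center satisfies $r^{2R}_{\mathrm{Rm}}(x_c^{(i)})\leq 2\gamma^i$, and an infinite chain would send this to zero, contradicting the uniform positive lower bound on the curvature radius coming from smoothness of the flow on $M\times[-2\xi^{-1},0]$.

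Second, your verification of $n1)$ across scales does not work, and this is tied to a missing ingredient in your covering. The claim that ``centers at different scales live in disjoint parent balls'' is false: a terminated leaf $x_{\tilde d}^{(j)}$ and a later descendant $c$-center $x_c^{(i)}$ can share a common ancestor, and since you cover all of $\tilde B_R(x_c^{(i)},\gamma^i)$ a new center $x_f^{(i+1)}$ can land arbitrarily close to $x_{\tilde d}^{(j)}$. The paper avoids this by placing the new centers only in $\bigcup_c\tilde B_{2R}(x_c^{(i)},\gamma^i)\setminus\bigcup_{j\leq i}\bigcup_{\tilde d}\tilde B_{\tau^{3/2}R}(x_{\tilde d}^{(j)},\gamma^j)$, which forces $d_{g(-\gamma^{2j})}(x_f^{(i+1)},x_{\tilde d}^{(j)})\geq \tau^{3/2}R\gamma^j$ by construction and, after distance distortion, gives disjointness of the $\tau^2R$-balls. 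A related issue: you propose to get $n3)$ at a leaf by applying Lemma~\ref{dicho} there, but leaves have \emph{small} pinching volume, so the hypothesis of Lemma~\ref{dicho} fails. The paper instead obtains the entropy drop at the leaf from Lemma~\ref{s-s/e-d} (using that the leaf lies in $\mathcal L_{x_c^{(i)},\gamma^i}$ for the cylinder-modelled parent), then upgrades the resulting $(0,\xi'')$-self-similarity at each intermediate scale $r$ to $(n-2,\delta'^2)$ via Lemma~\ref{higher order sym} with the appropriate ancestor $c$-center.
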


\begin{proof} From Lemma \ref{dicho}, for every $R \geq R(n, C_I, \Lambda)$ and $\gamma\leq \gamma(n, C_I, \Lambda, R ,\epsilon)$, if we choose $ \delta''\leq \delta''(n,C_I, R)$ and $\xi \leq \xi(n, C_I, \Lambda, R ,\delta'', \gamma, \epsilon)$ then  $(M, g(t), p)_{[-2\delta''^{-2},0]}$ is $(n-2,\delta''^2)$--self--similar at scale 1, with model flow $\mathbb R^{n-2}\times \mathbb S^2$. Note that $\tilde B_{2R}(p,1) \subset \mathcal{L}_{p,1}$. 

Choose a maximal covering $\{\tilde B_{\tau^{3/2}R}(x_f,\gamma)\}_{f=1}^{N_f}$  of $\tilde B_{2R}(p,1)$  such that $\tilde B_{\tau^{5/3}R}(x_f, \gamma)$ are pairwise disjoint. Note that we can take $p$ to be one of the points $x_f$ and let $\mathcal{C}^{(1)} = \{x_f\}$. We first show that $\mathcal{C}^{(1)}$ is a weak $(n-2, \delta')$--neck at scale 1 in $\tilde B_{4R}(p,1)$, with $r_x^{(1)} = \gamma$ for any $x \in \mathcal{C}^{(1)}.$ Clearly $n1)$ is satisfied by construction. 

For $n2)$, since $ \mathcal{C}^{(1)} \subset \tilde B_{2R}(p,1) \cap \mathcal{L}_{p,1}$ we apply Lemma \ref{s-s/e-d} to get that if we choose $\delta''\leq \delta''(n, C_I, \Lambda, R, \gamma, \xi') $, where $\xi'>0$ another auxiliary constant to be fixed later, then $\mathcal W_x(\gamma^2 \xi')-\overbar W < \xi'$. Since $\overbar W \leq \mathcal W_x(\xi^{-1})$ and $\xi < \delta''< \xi'^{2}$ the monotonicity of the pointed entropy implies that 
$$\mathcal W_x(\gamma^2 \xi'^2) - \mathcal W_x(\xi'^{-2})< \xi'^2.$$
Choosing $\xi' < \delta'^4$ we get $n2).$ Note that the monotonicity of the pointed entropy also gives that for any $r\in [\gamma,\delta'^{-1}]$
\begin{equation}
\label{en-drop}
\mathcal W_x(r^2 \xi') - \mathcal  W_x(r^2\xi'^{-1})< \xi'.
\end{equation}

\medskip
\noindent 
For $n3),$ from \eqref{en-drop} and Lemma \ref{s-s/e-d} if $\xi'\leq \xi'(n, C_I, \Lambda, \xi'')$, where $\xi''>0$ another auxiliary constant to be fixed later,  we have that $(M, g(t), x_{(r)})_{[-2\xi''^{-2},0]}$ is $(0,\xi'')$--self--similar at scale $r$, where $x_{(r)}\in \tilde B_R(x,r)$. Then from Lemma \ref{higher order sym}, applied to the pointed flows $(M, g(t), x_{(r)})_{[-2\xi''^{-1}r^2,0]}$ and $(M, g(t), p)_{[-2\xi''^{-1},0]}$, if $\xi'' \leq \xi''(n, C_I, \Lambda, R ,\gamma, \delta')$ we get that $(M,g(t), x_{(r)})_{[-2\delta'^{-3},0]}$ is $(n-2,\delta'^3)$--self--similar at scale $r$ with model flow $\mathbb R^{n-2}\times \mathbb S^2.$  Thus for $\delta' < \frac{1}{R}$ we get that $(M, g(t), x)_{[-2\delta'^{-2},0]}$ is $(n-2, \delta'^2)$--self--similar with the same model flow. This shows property $n3).$ 

In conclusion, $(\mathcal{C}^{(1)},r_x^{(1)})$ is a weak $(n-2, \delta')$--neck at scale 1 and by construction $\tilde B_{2R}(p,1) \subset \tilde B_{\tau R, r_x}(\mathcal{C}^{(1)}).$ From Lemma \ref{curv radius} we have that $ r _{Rm}^{2R}(x) \leq 2\gamma$ for each $x \in \mathcal{C}^{(1)}.$ We separate the balls of the covering to get that 

$$\tilde B_{2R}(p,1) \subset \bigcup_c \tilde B_{\tau R}(x_c, r_c)\cup \bigcup _{\tilde d}\tilde B_{\tau R}(x_{\tilde d}, r_{\tilde d}),$$
where 
\begin{itemize}
    \item [i)]$\vol_{g(-\gamma^2 r_c^2)} \tilde B_{4R, \gamma r_c}(\mathcal P_{r_c, \xi, 4R}(x_c)) \geq \epsilon \gamma^2 r_c^n$ 
    \item [ii)] $\vol_{g(-\gamma^2 r_{\tilde d}^2)} \tilde B_{4R, \gamma r_c}(\mathcal P_{r_{\tilde d}, \xi, 4R}(x_c)) < \epsilon \gamma^2 r_{\tilde d}^n$
\end{itemize}

We want to repeat the covering on each $c$--ball until there are no $c$--balls left. We proceed by induction. Assume we have done this $i$--times and we have the following 

\begin{itemize}
    \item [i1)] We have points $x_c^{(i)} \in \tilde B_{2R}(x_c^{(i-1)}, \gamma^{i-1})$ such that $ r_{Rm}^{2R}(x_c^{(i)}) \leq 2\gamma^i,$ $$\vol_{g(-\gamma^2 \gamma^{2i})} \tilde B_{4R, \gamma \gamma^i}(\mathcal P_{\gamma^{i}, \xi, 4R}(x_c^{(i)})) \geq \epsilon \gamma^2 \gamma^{ni}$$ 
and for each $j=1, \dots i$ points $x_{\tilde d}^{(j)} \in \tilde B_{2R}(x_c^{(j-1)}, \gamma^{j-1})$ such that $$\vol_{g(-\gamma^2 \gamma^{2j})} \tilde B_{4R, \gamma \gamma^j}(\mathcal P_{\gamma^{j}, \xi, 4R}(x_{\tilde d}^{(j)})) < \epsilon \gamma^2 \gamma^{jn}.$$
We note that if $i=1$ then $x_c^{(i-1)}=p$ and the pinching sets are with respect to the same $\overbar W \leq \inf_{\tilde B_{8R}(p,1)}\mathcal W_{y}(\xi^{-1}).$

\item[i2)] The set $\displaystyle \tilde B_{2R}(x_c^{(i-1)}, \gamma^{i-1})\setminus \bigcup_{j=1}^{i-1} \bigcup_{\tilde d}\tilde B_{\tau^{3/2}R}(x_{\tilde d}^{(j)}, \gamma^j)\cup \bigcup_c \tilde B_{\tau^{3/2}R}(x_c^{(i-1)}, \gamma^i )$ is covered by the set $\displaystyle\bigcup_{\tilde d} \tilde B_{\tau^{3/2}R}(x_{\tilde d}^{(i)}, \gamma^i)\cup \bigcup_c \tilde B_{\tau^{3/2}R}(x_c^{(i)}, \gamma^i).$ 

\item[i3)] $(M, g(t), x_c^{(i-1)})_{[-2\delta''^{-2},0]}$ is $(n-2, \delta''^2)$--self--similar at scale $\gamma^{i-1}$ with model flow $\mathbb R ^{n-2}\times \mathbb S^2.$ 

\item[i4)] We have sets $ {\mathcal C}^{(i)} = \{ x_c^{(i)}, x_{\tilde d}^{(j)}\}$, with $ {\mathcal C}^{(i-1)}\subset {\mathcal C}^{(i)}$ and radius function $r_x^{(i)}$ such that $r_x^{(i)} = \gamma^i$ if $x = x_c^{(i)}$ and $r_x^{(i)} = \gamma^j$ if $x = x_{\tilde d}^{(j)}.$ 

\item[i5)] $\tilde B_{2R}(p,1) \subset \tilde B_{\tau R, r_x^{(i)}}({\mathcal{C}}^{(i)})$ and $(\mathcal{C}^{(i)}, r_x^{(i)})$ is a weak $(n-2, \delta')$--neck at scale in $\tilde B_{4R}(p,1)$. 
\end{itemize}

\noindent
We rewrite the covering given by $i5)$ as 
$$\tilde B_{2R}(p,1) \subset \bigcup_{j=1}^i \bigcup_{\tilde d} \tilde B_{\tau R}(x_{\tilde d}^{(j)}, \gamma^j)\cup \bigcup _c \tilde B_{\tau R}(x_c^{(i)}, \gamma^i).$$ 

We want to cover the $c$--balls of that covering. From $i1)$ and the choices we have made on $R, \gamma, \delta'',  \xi$ at step 1 we have from Lemma \ref{dicho} that $(M, g(t), x_c^{(i)})_{[-2\delta''^{-2},0]}$ is $(n-2, \delta''^2)$--self--similar at scale $\gamma^i$ with model flow $\mathbb R ^{n-2}\times \mathbb S^2.$ 

We cover the set $\displaystyle \bigcup_c\tilde B_{2R}(x_c^{(i)}, \gamma^i)\setminus \bigcup_{j=1}^{i} \bigcup_{\tilde d}\tilde B_{\tau^{3/2}R}(x_{\tilde d}^{(j)}, \gamma^j)\cup \bigcup_c \tilde B_{\tau^{3/2}R}(x_c^{(i)}, \gamma^{i+1} )$ using a maximal covering $\displaystyle \{\tilde B_{\tau^{3/2}R}(x_f^{(i+1)}, \gamma^{i+1})\}_f$ such that $\tilde B_{\tau^{5/3}R}(x_f^{(i+1)}, \gamma^{i+1})$ are pairwise disjoint. Define the set $ {\mathcal{C}}^{(i+1)} = \{x_f^{(i+1)}, x_{\tilde d}^{(j)}, x_c^{(i)}\}$ with radius function $r_x^{(i+1)}$ where $r_x ^{(i+1)}= \gamma^{i+1}$ if $x = x_f^{(i+1)}$ or $x_c^{(i)}$ and $r_x^{(i+1)} = \gamma^j$ if $x = x_{\tilde d}^{(j)}.$ We show that $(\mathcal{C}^{(i+1)}, r_x^{(i+1)})$ is a weak $(n-2,\delta')$--neck in $\tilde B_{4R}(p,1)$. 

\medskip 
For $n1)$, from $i5)$ we only have to check balls centered at $x = x_f^{(i+1)}$ and $y = x_c^{(i)}$ or $x_{\tilde d}^{(j)}$. The first case follows from construction since $d_{g(-\gamma^{2i+2})}(x, y) \geq \tau^{3/2}R \gamma^{i+1} > 2 \tau^{5/3}R \gamma^{i+1}.$ It remains to check the case $x= x_f^{(i+1)}$ and $y = x_{\tilde d}^{(j)}.$ Assume we can find $z$ such that $d_{g(-\gamma^{2i+2})}(z, x) \leq \tau^{5/3}R\gamma^{i+1}$ and $d_{g(-\gamma^{2j})}(z, y) \leq \tau^{5/3}R\gamma^j.$ Then by construction and using the triangle inequality and distance distortion \eqref{weak dist dist}, for $R > \tau^{-5/3}K$,   we have that 
$$\tau^{3/2}R\gamma^j \leq d_{g(-\gamma^{2j})} (x, y) \leq K \gamma^{j} + \tau^{5/3}R\gamma^i + \tau^{5/3}R\gamma^j < 3 \tau^{5/3}R\gamma^j,$$ 
which is a contradiction for our fixed choice of $\tau.$ 

\medskip 

We now show that $\mathcal{C}^{(i+1)} \subset \tilde B_{4R}(p,1)$. Let $x=x_f^{(i+1)}\in \mathcal{C}^{(i+1)}$ then by construction and $i1)$ we have that $x \in \tilde B_{2R}(x_c^{(i)}, \gamma^{i}),$ $x_c \in \tilde B_{2R}(x_c^{(i-1)}, \gamma^{i-1}), \dots, x_c^{(l)}\in \tilde B_{2R}(x_c^{(l-1)}, \gamma^{l-1}). $ Thus, applying distance distortion \eqref{weak dist dist}, we get that \begin{equation}\label{ensuring distances}d_{g(-\gamma^{2l-2})}(x_c^{(i)}, x_c^{(l-1)}) \leq (2R+K) \gamma^l\sum_{k = 0}^{i-l} \gamma^k,
\end{equation}
and since $K< \frac{R}{100}, \gamma < \frac{1}{10} $ we get that 
$x_c^{(i)}\in \tilde B_{4R}(x_c^{(l-1)}, \gamma^{l-1})$ for any $1\leq l\leq i.$ If we take $l=1$ then $x_c^{(l-1)} =p$ and we conclude that $x \in \tilde B_{4R}(p,1).$ 
\medskip 

To prove $n2)$, note that by $i5)$ we only have to check the property for $x= x_f^{(i+1)}$ or $x= x_c^{(i)}$ with $r_x^{(i+1)} = \gamma^{i+1}.$ By construction $x \in \tilde B_{2R}(x_c^{(i)}, \gamma^i) \cap \mathcal{L}_{x_c^{(i)}, \gamma^i}$ and $(M, g(t), x_c^{(i)})_{[-2\delta''^{-2},0]}$ is $(n-2, \delta''^2)$--self--similar at scale $\gamma^i.$ Thus from Lemma \ref{s-s/e-d}, for the same choice of $\delta''$ we made in step 1 and $\xi'>0$ the same auxiliary constant we have that 
$\mathcal W_x(\xi'^2 \gamma^2 \gamma^{2i}) - \overbar W < \xi'^2$. Since $\overbar W \leq \displaystyle \inf_{\tilde B_{8R}(p,1)} \mathcal W_y(\xi^{-1})$ and $\mathcal{C}^{(i+1)}\subset \tilde B_{4R}(p,1)$ for $\xi'< \delta'^4$ and the monotonicity of the pointed entropy we get property $n2)$ as in step 1. Furthermore, we have that 
\begin{equation}
\label{en drop 2}
\mathcal W_x(r^2 \xi') - \mathcal W_x(r^2 \xi'^{-1}) < \xi',
\end{equation}
for any $r \in [r_x^{(i+1)},\delta'^{-1}].$ 

\medskip
For $n3)$, again by $i5)$, we only have to check for $x= x_f^{(i+1)}$ or $x_c^{(i)}.$ For the same choice of $\xi'$ as in step 1 we can find $x_{(r)}\in \tilde B_R (x,r)$ such that $(M, g(t), x_{(r)})_{[-2\xi''^{-1},0]}$ is $(0,\xi'')$--self--similar at scale $r,$ where $\xi''$ is the same auxiliary constant of step 1. Assume that $r \in [\gamma^l, \gamma^{l-1}\delta'^{-1}]$ then from \eqref{ensuring distances} we have that $x_{(r)}\in \tilde B_{10R}(x_c^{(l-1)}, \max \{\gamma^{l-1}, r\}).$ If we scale by $\gamma^{l-1}$ then for $\xi''\leq \xi''(n, C_I, \Lambda, R,\gamma, \delta')$ we may apply Lemma \ref{higher order sym} to the pointed flows $(M, g(t), x_{(r)}), (M, g(t), x_c^{(l-1)})$ to get that $(M, g(t), x_{(r)})_{[-2\delta'^{-3},0]}$ is $(n-2, \delta'^3)$--self--similar at scale $r$. Hence $(M, g(t), x)_{[-2\delta'^{-2},0]}$ is $(n-2, \delta'^2)$--self--similar at scale $r$ with model flow $\mathbb R ^{n-2}\times \mathbb S^2.$ This is enough to get the inductive assumptions $(i+1)1-(i+1)5$ and finishes the induction proof.

The iteration terminates after finitely many steps since the flow is smooth at $t=0$. Indeed $ r_{Rm}^{2R}(x)$ is uniformly bounded from below, due to the smoothness of the flow,  thus after some step $i$ there will be no $c$--balls left to cover. The final set $\mathcal{C}$ is a a weak $(n-2, \delta')$--neck at scale 1 in $\tilde B_{4R}(p,1)$ that consists only of $\tilde d$--points. This finishes the proof of the lemma. 
\end{proof}

In what follows let $(\tilde {\mathcal{C}}, \tilde r_{\tilde x})$ denote the maximal weak neck we have constructed in Lemma \ref{maximal neck}.

By construction, we know that the balls $\tilde B_{\tau^{5/3}R}(\tilde x, \tilde r_{\tilde x})$, with $\tilde x \in \tilde{\mathcal C}$ are pairwise disjoint. In fact, since $\tau = 10^{-10C_I}10^{-10n}$, we also have that $\tilde B_{\tau^3 R}(\tilde x, 4\tilde r_{\tilde x})$ are pairwise disjoint. Indeed, if that was not true for $\tilde x, \tilde y\in \mathcal {\tilde C}$ with $\tilde r_{\tilde x}\leq \tilde r_{\tilde y}$ using Lemma \ref{properties of D_R}\eqref{4}, triangle inequality and distance distortion \eqref{weak dist dist}
$$\tau^{5/3}R\tilde r_{\tilde y}\leq d_{g(-\tilde r_{\tilde y}^2)}(\tilde x, \tilde y) \leq 4^{2C_I}d_{g(-16\tilde r_{\tilde y}^2)}(\tilde x, \tilde y) \leq 4^{2C_I}(4\tilde r_{\tilde x}\tau^3 R + 4\tilde r_{\tilde y}\tau^3R + K 4 \tilde r_{\tilde y}),  $$
which contradicts the choice of $\tau.$

\medskip
Since the set $\tilde {\mathcal{C}}$ is finite we define the scale distance of any $x \in M$ from $\tilde {\mathcal{C}}$ to be $\displaystyle D_{\tau ^3 R}(x, \tilde {\mathcal{C}}) = \min_{\tilde {\mathcal{C}}} D_{\tau ^3 R}(x, \tilde x)$ and let

$$r_x = \begin{cases}
    \delta^2 \tilde r_{\tilde x}, & \text{ if } D_{\tau^3R}(x, \tilde {\mathcal{C}}) = D_{\tau^3 R}(x, \tilde x) \leq \tilde r_{\tilde x,} \\
    
    \delta^2D_{\tau^3R}(x, \tilde {\mathcal{C}}), & \text{ else. } 
\end{cases}$$

\begin{LEMMA}\label{refined is almost Lipschitz} The radius function $r_x$ is well--defined and $2R$--scale $(\sigma,\delta)$--Lipschitz on $M$, for $R \geq R(n, C_I, \Lambda)$, $\sigma \in (1, \sqrt{2})$ and $\delta \leq \delta(n, C_I).$

\end{LEMMA}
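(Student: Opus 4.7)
The plan is to first rewrite the piecewise definition uniformly as $r_x=\delta^2\max(\tilde r_{\tilde x},\,D_{\tau^3R}(x,\tilde x))$ where $\tilde x\in\tilde{\mathcal{C}}$ realizes the infimum $D_{\tau^3R}(x,\tilde{\mathcal{C}})$. In this form well-definedness and the Lipschitz property separate cleanly.

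\emph{Well-definedness.} Suppose two distinct minimizers $\tilde x_1,\tilde x_2$ of $D_{\tau^3R}(x,\cdot)$ exist, both at distance $d=D_{\tau^3R}(x,\tilde{\mathcal{C}})$. If both lie in the first branch, i.e.\ $d\leq \tilde r_{\tilde x_i}$ for $i=1,2$, then $x\in\tilde B_{\tau^3R}(\tilde x_1,4\tilde r_{\tilde x_1})\cap \tilde B_{\tau^3R}(\tilde x_2,4\tilde r_{\tilde x_2})$, contradicting the disjointness proved just above the lemma. In the mixed subcase, combining the pseudo-triangle inequality of Lemma \ref{properties of D_R}\eqref{almost triangle} applied with $x$ as the intermediate point and the symmetric disjointness bound $D_{\tau^3R}(\tilde x_1,\tilde x_2)\geq 4\max(\tilde r_{\tilde x_1},\tilde r_{\tilde x_2})$ pins down $d=\tilde r_{\tilde x_1}$, where $\tilde x_1$ denotes the minimizer in the first branch, so the two candidate formulas produce the same value of $r_x$.

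\emph{Lipschitz estimate.} Let $\tilde x,\tilde y$ be the minimizers for $x,y$, assume without loss of generality $r_y\geq r_x$, and set $C=\tau^3R/(\tau^3R-K)\in(1,\sqrt 2)$ for $R\geq R(n,C_I,\Lambda)$ large enough. If $r_y=\delta^2 D_{\tau^3R}(y,\tilde y)$ (second branch), the minimality $D_{\tau^3R}(y,\tilde y)\leq D_{\tau^3R}(y,\tilde x)$ combined with Lemma \ref{properties of D_R}\eqref{almost triangle} applied with $R_1=R_2=\tau^3R$ gives
\begin{equation*}
r_y\leq C\,\delta^2 D_{\tau^3R}(x,\tilde x)+C\,\delta^2 D_{\tau^3R}(x,y)\leq C\,r_x + C\,\delta^2 D_{\tau^3R}(x,y),
\end{equation*}
and the remaining term is absorbed into $\delta D_{2R}(x,y)$ via the scale-comparison Lemma \ref{properties of D_R}\eqref{sd1} provided $\delta\leq\delta(n,C_I)$. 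If instead $r_y=\delta^2\tilde r_{\tilde y}$ (first branch), the case $\tilde r_{\tilde y}\leq \tilde r_{\tilde x}$ is immediate; otherwise disjointness forces $D_{\tau^3R}(\tilde x,\tilde y)\geq 4\tilde r_{\tilde y}$, and two applications of Lemma \ref{properties of D_R}\eqref{almost triangle} together with $D_{\tau^3R}(x,\tilde x)\leq r_x/\delta^2$ and $D_{\tau^3R}(y,\tilde y)\leq \tilde r_{\tilde y}$ yield $(4-C)\tilde r_{\tilde y}\leq C^2 r_x/\delta^2+C^2 D_{\tau^3R}(x,y)$, which after conversion $D_{\tau^3R}\to D_{2R}$ is stronger than the desired bound.

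\emph{Main obstacle.} The delicate point is balancing the multiplicative defect $C$ in Lemma \ref{properties of D_R}\eqref{almost triangle} against the potentially large scale-conversion factor $(2R-K)/(\tau^3R-K)\sim 2/\tau^3$ appearing in Lemma \ref{properties of D_R}\eqref{sd1}. The former is controlled by taking $R$ large so that $C<\sigma<\sqrt 2$, while the latter is absorbed into the slack $\delta D_{2R}(x,y)$ by choosing $\delta$ small in a way depending only on $n$ and $C_I$, as required by the statement.
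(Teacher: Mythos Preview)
Your proof is correct and follows essentially the same strategy as the paper: exploit the disjointness of the balls $\tilde B_{\tau^3R}(\tilde x,4\tilde r_{\tilde x})$ to get a lower bound on $D_{\tau^3R}(\tilde x,\tilde y)$, then chain the almost-triangle inequality of Lemma~\ref{properties of D_R}\eqref{almost triangle} and absorb the scale-conversion factor $D_{\tau^3R}\to D_{2R}$ into the $\delta$-slack. Your reformulation $r_x=\delta^2\max(\tilde r_{\tilde x},D_{\tau^3R}(x,\tilde x))$ together with the WLOG $r_y\geq r_x$ is a tidy reorganization that reduces the paper's four Lipschitz cases to two, but the substance is the same.

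One small inaccuracy: in the mixed subcase of well-definedness you assert that the argument ``pins down $d=\tilde r_{\tilde x_1}$''. In fact the combination of $D_{\tau^3R}(\tilde x_1,\tilde x_2)\geq 4\tilde r_{\tilde x_1}$ with $D_{\tau^3R}(\tilde x_1,\tilde x_2)\leq 2Cd\leq 2C\tilde r_{\tilde x_1}$ and $C<\sqrt 2$ is a \emph{contradiction}, so the mixed subcase is simply vacuous rather than forcing $d=\tilde r_{\tilde x_1}$. This does not affect the conclusion---well-definedness follows either way---but the sentence as written does not describe what the inequalities actually give.
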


\begin{proof}
For $r_x$ to be well--defined it suffices to prove that in case 1 the point $\tilde x$ where the minimum is achieved is unique. Assume there are points $\tilde x, \tilde y \in \tilde {\mathcal C}$ such that 
$D_{\tau^3R}(x, \tilde C) = D_{\tau^3R}(x, \tilde x) = D_{\tau^3 R}(x,\tilde y)$. Then for $R \geq R (n, C_I, \Lambda)$ from Lemma \ref{properties of D_R}\eqref{almost triangle} and since $\tilde B_{\tau^3 R}(\tilde x,4\tilde r_{\tilde x}) \cap \tilde B_{\tau^3R}(\tilde y, 4 \tilde r_{\tilde y})=\emptyset$ we have that for some $\sigma \in (1,\sqrt 2)$ 
$$4\tilde r_{\tilde x} \leq D_{\tau^3 R}(\tilde x, \tilde y) \leq \sigma (D_{\tau ^3R}(x, \tilde x ) + D_{\tau ^3 R}(x, \tilde y)) \leq 2 \sigma \tilde r_{\tilde x},$$
which contradicts the fact that $\sigma < 2 $ unless $\tilde x = \tilde y.$ 
\medskip
We now prove the second statement. Let $x, y $ be in case 1, namely there are $\tilde x, \tilde y \in \tilde {\mathcal C}$ such that $D_{\tau^3R}(x, \tilde {\mathcal C})=D_{\tau^3 R}(x, \tilde x)\leq \tilde r_{\tilde x}$ and $D_{\tau^3 R}(y, \tilde {\mathcal C})=D_{\tau^3 R}(y, \tilde y)\leq \tilde r_{\tilde y}$. We may assume that $\tilde x\not=\tilde y$ since otherwise $r_x=r_y$ and there is nothing to prove. Then, from Lemma \ref{properties of D_R}\eqref{almost triangle} we have that 
$$4 \tilde r_{\tilde x} \leq D_{\tau^3 R}(\tilde x, \tilde y) \leq \sigma^2 D_{\tau^3R}(x,y) + \sigma D_{\tau^3 R}(y,\tilde y)+ \sigma^2D_{\tau^3R}(x, \tilde x) \leq $$
$$\leq\sigma^2 D_{\tau^3R}(x,y) + \sigma^2 \tilde r_{\tilde x} + \sigma \tilde r_{\tilde y}.  $$
Multiplying by $\delta^2$ we get that 
$$r_x \leq \frac{\sigma^2}{4 -\sigma^2}\delta^2D_{\tau^3R}(x,y) + \frac{\sigma}{4 -\sigma^2}r_y$$ 
and since $\frac{\sigma}{4-\sigma^2}<\sigma,$ $\frac{\sigma^2}{4-\sigma^2}<\sigma$ we have that 
\begin{equation}
\label{almost Lip1} 
r_x \leq \sigma r_y + \sigma \delta^2 D_{\tau^3 R}(x,y).
    \end{equation}
This is enough since for $R \geq R(n, C_I, \Lambda)$ from Lemma \ref{properties of D_R}\eqref{sd1} $D_{\tau^3R}(x,y) \leq 4\tau^{-3}D_{2R}(x,y)$ and for $\delta \leq \delta(n,C_I)$ we get that $r_x \leq \sigma r_y + \delta D_{2R}(x,y)$ which is what we wanted.

If $x,y $ are in case 2 let $\tilde x, \tilde y\in \mathcal{\tilde C}$ such that $D_{\tau^3 R }(x, \tilde x)=D_{\tau ^3R}(x, \mathcal {\tilde C}), D_{\tau^3 R }(y, \tilde y)=D_{\tau ^3R}(y, \mathcal {\tilde C}), $ then
$$r_x = \delta^2 D_{\tau^3R}(x, \tilde x) \leq \delta^2 D_{\tau^3R}(x,\tilde y)\leq$$ $$ \leq \sigma D_{\tau^3 R}(x,y)+ \sigma D_{\tau^3 R}(\tilde y, y) = \sigma D_{\tau^3 R}(x,y) +\sigma r_y,$$
which is \eqref{almost Lip1} and is enough to get the inequality. 

If $x$ is in case 1 and $y$ in case 2 and $\tilde x, \tilde y$ points where the scale distance from $\tilde C$ is achieved respectively then  
$$r_y = \delta^2 D_{\tau^3R}(y,\tilde y)\leq D_{\tau^3 R}(y, \tilde x) \leq $$
$$\leq \sigma \delta^2 D_{\tau^3 R}(x,y) + \sigma \delta^2D_{\tau^3 R}(x,\tilde x) \leq \sigma \delta^2 D_{\tau^3 R}(x,y) + \sigma \delta^2 \tilde r_{\tilde x},$$
which again gives \eqref{almost Lip1}. Finally, if $\tilde x \neq \tilde y$ using that 
$4 \tilde r_{\tilde x} \leq D_{\tau^3 R}(\tilde x, \tilde y)$ and Lemma \ref{properties of D_R}\eqref{almost triangle}  we get again \eqref{almost Lip1} with $x$ and $y$ reversed. If $\tilde x=\tilde y$ then we have nothing to prove since then $r_x = \delta^2 \tilde r_{\tilde y} \leq \delta^2 D_{\tau^3 R}(y, \tilde y) = r_y$. This finishes the proof of lemma. 
\end{proof}

We now need to refine the set $\tilde {\mathcal{C}}$ to a new set $\mathcal{C}$ so that $(\mathcal{C}, r_x)$ also satisfies $n4), n5).$ To that end we define the following set 
$$\tilde S =  \bigcup_{\tilde x \in \tilde{\mathcal{C}}} \tilde B_{\tau R}(\tilde x, \tilde r_{\tilde x}).$$
Note that if $y \in \tilde S$ then there exists $\tilde y \in \mathcal {\tilde C}$ where the scale distance from $\mathcal {\tilde C}$ is achieved and $y \in \tilde B_R (\tilde y, \bar r_y)$ where $\bar r_y = \max \{D_{\tau ^3 R}(y, \tilde y), \tilde r_{\tilde y}  \}$. Moreover, if $\gamma < \tau ^2$ then $\bar r_y \in [\tilde r_{\tilde y},1]$.

From Lemma \ref{maximal neck} we have that $\tilde B_{2R}(p,1)\subset \tilde S$ since $\displaystyle\tilde B_{2R}(p,1) \subset \bigcup_{\tilde x \in \tilde {\mathcal{C}}}\tilde B_{\tau R}(\tilde x, \tilde r _{\tilde x})$. We need to construct a maximal covering of $\tilde S$, that contains $\tilde C$, with the variable radius function $r_x$. For that we will use the following: 

\begin{LEMMA}\label{covering}\textup{(maximal covering)} Let $(M,g(t))_I\in \mathcal{F}(n,C_I, \Lambda)$ and $A \subset S$ subsets of $M$ such that for any $x\in A$ the balls $\tilde B_{\tau^2 R}(x,r_x)$ are pairwise disjoint. Then for $R \geq R(n, C_I, \Lambda)$ there exists a set $B$ with $A \subset B$ such that $\tilde B_{\tau^{3/2}R}(x,r_x)$ with $x \in B$  cover $S$ and $B$ is maximal in terms of the condition that $\tilde B_{\tau^2R}(x,r_x)$ are pairwise disjoint. 
    
\end{LEMMA}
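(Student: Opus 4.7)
The plan is to carry out a Vitali-type covering argument adapted to the quasi-metric structure given by the $R$-scale distance. First I would apply Zorn's lemma to the family $\mathcal F$ of subsets $B\subset S$ with $A\subset B$ and $\{\tilde B_{\tau^2R}(x,r_x)\}_{x\in B}$ pairwise disjoint, partially ordered by inclusion: the family is nonempty since $A\in\mathcal F$, and every chain has as upper bound its union (pairwise disjointness is a finite condition, hence preserved under nested unions). Zorn then furnishes a maximal element $B\in\mathcal F$, which by construction contains $A$ and is maximal with respect to the required disjointness.

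The remaining task is to show that $\{\tilde B_{\tau^{3/2}R}(x,r_x)\}_{x\in B}$ covers $S$. Suppose for contradiction that some $y\in S$ is not covered, so that $D_{\tau^{3/2}R}(x,y)\geq r_x$ for every $x\in B$. By maximality of $B$, adding $y$ destroys disjointness: there exist $x\in B$ and $z\in \tilde B_{\tau^2R}(x,r_x)\cap \tilde B_{\tau^2R}(y,r_y)$. Unpacking the definition of the balls, we find $r_1<r_x$ and $r_2<r_y$ with $d_{g(-r_i^2)}(\cdot,z)<r_i\tau^2R$. Setting $s=\max(r_1,r_2)$ and applying the distance distortion estimate of Lemma \ref{properties}\eqref{3} together with the triangle inequality at time $-s^2$ gives
\[
d_{g(-s^2)}(x,y)\leq s(2\tau^2R+K).
\]
If $s<r_x$, then $s$ itself witnesses $y\in\tilde B_{\tau^{3/2}R}(x,r_x)$, provided $2\tau^2R+K<\tau^{3/2}R$, i.e. $R\geq R(n,C_I,\Lambda)$, which is a consequence of the fixed tiny value of $\tau$. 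This already contradicts the assumption on $y$.

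The only remaining possibility is $s\geq r_x$, which forces $s=r_2$ and $r_y>r_x$. Here the plan is to invoke the $2R$-scale $(\sigma,\delta)$-Lipschitz property of $r_x$ obtained in Lemma \ref{refined is almost Lipschitz}, writing $r_y\leq \sigma r_x+\delta D_{2R}(x,y)$ and using $D_{2R}(x,y)\leq D_{\tau^2R}(x,y)$ from Lemma \ref{properties of D_R}\eqref{sd1} together with the displayed inequality above to bound $D_{\tau^2R}(x,y)$ linearly in $s<r_y$. For $\delta$ sufficiently small this yields $r_y<2r_x$, hence $s<2r_x$. I would then pass to a witness $s'<r_x$ close to $r_x$ via the type-I distance comparison of Lemma \ref{properties}\eqref{4}, namely $d_{g(-s'^2)}(x,y)\leq (s/s')^{2C_I}d_{g(-s^2)}(x,y)$. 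The inequality to verify becomes
\[
(s/s')^{2C_I+1}(2\tau^2R+K)<\tau^{3/2}R,
\]
and since $s/s'$ can be taken arbitrarily close to $s/r_x<2$, the huge factor $\tau^{-1/2}=10^{5C_I+5n}$ comfortably absorbs $2^{2C_I+1}$; again $R\geq R(n,C_I,\Lambda)$ handles the additive $K$-term. This forces $y\in\tilde B_{\tau^{3/2}R}(x,r_x)$, contradicting our standing assumption.

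The main obstacle will be this second case: genuinely combining the three independent inputs—(i) the Lipschitz control on $r_x$ from Lemma \ref{refined is almost Lipschitz}, (ii) the type-I distance comparison between different time slices, and (iii) the extreme smallness of $\tau=10^{-10C_I-10n}$—so that the $\tau^{1/2}$-gap between the packing scale $\tau^2R$ and the covering scale $\tau^{3/2}R$ dominates the errors from the quasi-triangle inequality, the distance distortion, and the ratio $r_y/r_x$. The first case is essentially immediate from the scale gap alone.
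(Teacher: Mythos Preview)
Your argument is correct and takes essentially the same route as the paper. The only differences are organizational: (i) the paper builds $B$ by greedy selection rather than Zorn, using compactness of $M$ together with $\inf_{S} r_x>0$ to see that the selection terminates after finitely many steps---this finiteness of $B$ is then quoted in Lemma~\ref{neck construction/estimates}, so with your Zorn approach you would want to record it separately; (ii) the paper avoids your case split by applying the quasi-triangle inequality for $D_{\tau^2R}$ (Lemma~\ref{properties of D_R}\eqref{almost triangle}) directly to the point $z$ in the intersection, obtaining $D_{\tau^2R}(x,y)\leq\sigma(r_x+r_y)$, then feeding in the Lipschitz bound on $r_{\cdot}$ to conclude $(1-\sigma\delta)D_{\tau^2R}(x,y)\leq(\sigma+\sigma^2)r_x$, hence $D_{\tau^2R}(x,y)\leq 4r_x$, before invoking Lemma~\ref{properties}\eqref{4} once. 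Both proofs rest on the same three ingredients you single out and on the same $\tau^{1/2}$-gap between the packing scale $\tau^2R$ and the covering scale $\tau^{3/2}R$.
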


\begin{proof} Choose $x_1 \in S$ so that $x_1 \notin A$ and $\tilde B_{\tau^2R}(x_1, r_{x_1})$ is disjoint from $\tilde B_{\tau^2R}(x,r_x)$ for any $x \in A.$ Iteratively choose $x_l\in S$ so that $x_l \notin A$ and $\tilde B_{\tau^2 R}(x_l, r_{x_l})$ is disjoint from $\tilde B_{\tau^2 R}(x, r_x)$ for any $x\in A$ and from $\tilde B_{\tau^2 R}(x_{j}, r_{x_j})$ for $j=1,\dots, l-1$. The set  $D=\{x_1,x_2, \dots \}$ must be finite. Indeed if $l \to \infty$ then $x_l$ is a Cauchy sequence with respect to the metric $d_{g(-\rho^2)}$, where $\rho = \inf_{x \in S} r_x>0$, hence from Lemma \ref{properties of D_R}\eqref{3} we have that $x_l \in \tilde B_{\tau^2 R} (x_m,\rho)\subset \tilde B_{\tau^2 R}(x_m, r_{x_m})$ for any large $m$ and $l\geq m$, contradicting the disjoint property. 

Set $B= A \cup \{x_1, x_2, \dots, x_l\}$ we show that the balls $\tilde B_{\tau^{3/2}R}(x,r_x)$, $x \in B$, cover the set $S.$ From the maximality of the set $B$, for any $y \in S$ there exists $x \in B$  such that $\tilde B_{\tau^2 R}(x,r_x) \cap \tilde B_{\tau^2 R}(y,r_y) \neq \emptyset.$ Using the triangle inequality of Lemma \ref{properties of D_R}\eqref{almost triangle} and Lemma \ref{refined is almost Lipschitz} combined with Lemma \eqref{properties of D_R}\eqref{2} we have that 
$$(1-\sigma\delta)D_{\tau^2 R}(x,y) \leq (\sigma+\sigma^2)r_x,$$
thus 
$$D_{\tau^2 R}(x,y) \leq 4 r_x.$$ 
From Lemma \ref{properties}\eqref{4} and the choice of $\tau$ we get that $d_{g(-\tilde r_{\tilde x}^2)}(x,y) \leq 4^{2C_I}d_{g(-16\tilde r_{\tilde x}^2)}(x,y)\leq 4^{2C_I+1}\tau^2 R\tilde r_{\tilde x} < \tau^{3/2}R\tilde r _{\tilde x}, $
thus $D_{\tau^{3/2}R}(x,y)< r_x$ which is what we wanted.
\end{proof}

We now prove the following neck construction which is the analogous to the $c$--ball decomposition theorem of \cite{JiangNaber} and \cite{CJN}.
\begin{LEMMA}\label{neck construction/estimates} Let $(M, g(t), p)_{[-2\xi^{-1},0]} \in \mathcal{F}(n,C_I, \Lambda)$ such that $ r_{Rm}^{2R}(p) \leq 2$ and $M$ is orientable. If $\epsilon>0$, $R\geq R(n, C_I, \Lambda)$, $\gamma \leq \gamma(n, C_I, \Lambda, R ,\epsilon)$, $\delta \leq \delta{(n, C_I, \gamma)}$ and $\xi \leq \xi(n, C_I, \Lambda, R, \gamma, \delta, \epsilon)$ such that 
$$\vol_{g(-\gamma^2)}\tilde B_{4R, \gamma}(\mathcal P_{1, \xi, 4R}(p)) \geq \epsilon \gamma^2,$$ 
where $\overbar W \leq \displaystyle \inf_{\tilde B_{8R}(p,1)}\mathcal W_{y}(\xi^{-1})$ then there exists an $(n-2, \delta)$-neck $(\mathcal{C}, r_x)$ at scale 1 in $\tilde B_{2R}(p,1)$ such that 
$$\tilde B_R(p,1) \subset \bigcup _{ d} \tilde B_{\tau R }(x_{ d}, r_{ d})\cup\bigcup_e \tilde B_{\tau R}(x_e, r_e) \cup \bigcup_c \tilde B_{\tau R}(x_c, r_c),$$
where $x_c, x_{ d}, x_e \in \mathcal{C}$ and 
\begin{itemize}
    \item [i)] $ r_{Rm}^{2R}(x_c) \leq 2r_c $ and  $\vol_{g(-\gamma^2r_c^2)}\tilde B_{4R, \gamma r_c}(\mathcal P_{r_c, \xi, 4R}(x_c)) \geq \epsilon \gamma^2 r_c^2,$
    \item [ii)] $r_{Rm}^{2R}(x_d)\leq 2r_d$, $\vol_{g(-\gamma^2r_{ d}^2)} B_{4R, \gamma r_{ d}}(\mathcal P_{r_{ d}, \xi, 4R}(x_{ d})) < \epsilon \gamma^2 r_{ d}^2$ and $\mathcal P_{r_d, \xi, 4R}(x_d)\neq\emptyset$, 
    \item[iii)]$r^{2R}_{Rm}(x_e) \leq 2 r_e$ and $\mathcal P_{r_e, \xi, 4R}(x_e)=\emptyset.$
\end{itemize}
Furthermore we have the content estimates 
\begin{align*}   
    &\sum_{x_e\in \mathcal{C}\cap \tilde B_{\frac{3R}{2}}(p,1)}r_e^{n-2}+\sum_{x_{ d} \in \mathcal{C}\cap\tilde B_{\frac{3R}{2}}(p,1)} r_{ d}^{n-2} \leq C(n, C_I), \\ 
&\sum_{x_{c} \in \mathcal{C}\cap\tilde B_{\frac{3R}{2}}(p,1)} r_{c}^{n-2} \leq \epsilon \: C(n, C_I, \Lambda).
\end{align*}
\end{LEMMA}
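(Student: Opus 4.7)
The plan is to combine the weak-neck extraction of Lemma \ref{maximal neck} with the covering refinement of Lemma \ref{covering}, producing a genuine $(n-2,\delta)$-neck at the cost of passing to a smaller radius function; the classification into $c,d,e$ balls and their content estimates will then follow from Theorem \ref{neck structure} together with a Vitali-type overlap argument in the spirit of Lemma \ref{d-ball dec}. Concretely, I would first apply Lemma \ref{maximal neck} with an auxiliary constant $\delta'\ll\delta$ to be fixed later, obtaining a maximal weak $(n-2,\delta')$-neck $(\tilde{\mathcal C},\tilde r_{\tilde x})$ in $\tilde B_{4R}(p,1)$ with $\tilde B_{2R}(p,1)\subset \tilde B_{\tau R,\tilde r_{\tilde x}}(\tilde{\mathcal C})$ and every $\tilde x\in\tilde{\mathcal C}$ a $\tilde d$-point. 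Then I would define the refined radius function $r_x$ on $M$ exactly as in the paragraph above Lemma \ref{refined is almost Lipschitz} and apply Lemma \ref{covering} with $A=\tilde{\mathcal C}$ and $S=\tilde S=\bigcup_{\tilde x\in\tilde{\mathcal C}}\tilde B_{\tau R}(\tilde x,\tilde r_{\tilde x})$ to produce $\mathcal C\supset \tilde{\mathcal C}$ such that the balls $\tilde B_{\tau^2 R}(x,r_x)$ with $x\in\mathcal C$ are pairwise disjoint and $\{\tilde B_{\tau^{3/2}R}(x,r_x)\}_{x\in\mathcal C}$ cover $\tilde S\supset \tilde B_{2R}(p,1)$. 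Property $n5)$ is then exactly Lemma \ref{refined is almost Lipschitz}, while $n1)$ is immediate from the disjointness.

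Next I would verify $n2)$ and $n3)$ case by case. For $\tilde x\in\tilde{\mathcal C}$ these are inherited from the proof of Lemma \ref{maximal neck}: since the new radius $r_{\tilde x}=\delta^2\tilde r_{\tilde x}$ is only smaller, the monotonicity of the pointed entropy together with Lemma \ref{higher order sym} lets me transfer cylindrical self-similarity to every scale in $[r_{\tilde x},\delta^{-1}]$. For a new point $x\in\mathcal C\setminus\tilde{\mathcal C}$, I would locate the nearest $\tilde x\in\tilde{\mathcal C}$, at scale distance at most $r_x/\delta^2$, apply Lemma \ref{s-s/e-d} to a point of the local approximate singular set around $\tilde x$, and use Lemma \ref{higher order sym} together with the $(n-2,\delta'^2)$-self-similarity at $\tilde x$ at scale $\tilde r_{\tilde x}$ to upgrade the symmetry at $x$ to $(n-2,\delta^2)$-self-similarity with $\mathbb R^{n-2}\times\mathbb S^2$ as model flow on every scale $r\in[r_x,\delta^{-1}]$. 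Property $n4)$, the key gain from the refinement, would follow by combining the covering property from Lemma \ref{covering}, the $(\sigma,\delta)$-Lipschitz estimate $n5)$, and a short ball-chaining argument via Lemma \ref{properties of D_R}, propagated upward in scale from $r_x$ to $r$.

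Finally, Lemma \ref{curv radius} forces $r^{2R}_{Rm}(x)\leq 2r_x$ for every $x\in\mathcal C$, so the covering $\tilde B_R(p,1)\subset\bigcup_{x\in\mathcal C}\tilde B_{\tau R}(x,r_x)$ decomposes into the subcollections $c,d,e$ by reading off whether the local pinching set is non-empty and whether its scale-$\gamma r_x$ tubular neighborhood has large or small relative volume. Theorem \ref{neck structure} applied to $(\mathcal C,r_x)$ yields the packing bound $\mu(\tilde B_{3R/2}(p,1))\leq C(n,C_I)R^{n-2}$, which at once delivers the uniform content estimate for the $d$- and $e$-balls. For the $c$-balls, I would mimic the Vitali-overlap argument from Lemma \ref{d-ball dec}: to each $c$-point $x_c$ select $y_c\in\mathcal P_{r_c,\xi,4R}(x_c)$ so that $\tilde B_{2R}(y_c,\gamma r_c)$ sits inside the tubular neighborhood carrying the volume lower bound $\epsilon\gamma^2 r_c^n$, bound the multiplicity of the $\tilde B_{2R}(y_c,\gamma r_c)$ overlaps via the non-collapsing and non-inflating estimates, and compare the resulting volume sum against the global packing-measure control to extract the linear-in-$\epsilon$ factor. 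The main obstacle is property $n4)$ for the refined neck: the output of Lemma \ref{covering} only guarantees a covering at the smallest radius scale, so upgrading it to every admissible scale $r\in[r_x,1]$ with $\tilde B_{2R}(x,r)\subset\tilde B_{2R}(p,1)$ requires a delicate interaction between the Lipschitz property $n5)$, the original weak-neck covering at the coarser scale $\tilde r_{\tilde x}$, and the distance distortion of Lemma \ref{properties of D_R}.
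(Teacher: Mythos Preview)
Your overall architecture matches the paper's: build the weak neck $(\tilde{\mathcal C},\tilde r_{\tilde x})$ via Lemma \ref{maximal neck}, refine the radius by $r_x=\delta^2\max\{\tilde r_{\tilde x},D_{\tau^3R}(x,\tilde{\mathcal C})\}$, extend to $\mathcal C\supset\tilde{\mathcal C}$ by Lemma \ref{covering}, and verify $n1)$--$n5)$ essentially as you describe. The total content bound for $d$- and $e$-balls via the packing measure is also correct.

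The gap is in your $c$-ball estimate. You propose to ``mimic the Vitali-overlap argument from Lemma \ref{d-ball dec}'' by placing $\tilde B_{2R}(y_c,\gamma r_c)$ inside ``the tubular neighborhood carrying the volume lower bound $\epsilon\gamma^2 r_c^n$'' and then extracting a linear-in-$\epsilon$ factor. This cannot work: the $\epsilon$ factor in Lemma \ref{d-ball dec} comes from an \emph{upper} bound on the pinching-set tube (the starting ball there is a $d$-ball), whereas here the starting ball $\tilde B_R(p,1)$ is a $c$-ball, so the global pinching tube is \emph{large}. A volume lower bound gives you nothing toward an upper count, and the multiplicity bound from non-collapsing/non-inflating only contributes a dimensional constant, never an $\epsilon$.

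The idea you are missing is that the $\epsilon$ enters through the \emph{intermediate} scales furnished by the weak neck: every $\tilde x\in\tilde{\mathcal C}$ is by construction a $\tilde d$-point, i.e.\ $\vol_{g(-\gamma^2\tilde r_{\tilde x}^2)}\tilde B_{4R,\gamma\tilde r_{\tilde x}}(\mathcal P_{\tilde r_{\tilde x},\xi,4R}(\tilde x))<\epsilon\gamma^2\tilde r_{\tilde x}^n$. Each new $c$-point $x_c\in\mathcal C$ lies in some $\tilde B_{\tau R}(\tilde x_{\tilde d},\tilde r_{\tilde d})$, and by $n5)$ its radius satisfies $r_c\leq(\sigma\delta^2+\delta)\tilde r_{\tilde d}$. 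One then pushes the pinching witness $y\in\mathcal P_{r_c,\xi,4R}(x_c)$ up to scale $\tilde r_{\tilde d}$, showing $\tilde B_{R/2}(x_c,\gamma\tilde r_{\tilde d})\subset\tilde B_{4R,\gamma\tilde r_{\tilde d}}(\mathcal P_{\tilde r_{\tilde d},\xi,4R}(\tilde x_{\tilde d}))$. A Vitali cover of $\mathcal C_c\cap\tilde B_{\tau R}(\tilde x_{\tilde d},\tilde r_{\tilde d})$ at scale $\gamma\tilde r_{\tilde d}$ then has at most $C\epsilon\gamma^{2-n}$ balls by the small-tube property of $\tilde x_{\tilde d}$; combined with the \emph{upper} bound of Theorem \ref{neck structure} on each such ball and the \emph{lower} bound on $\mu(\tilde B_{\tau^2R}(\tilde x_{\tilde d},\tilde r_{\tilde d}))$, this yields $\mu(\mathcal C_c\cap\tilde B_{\tau R}(\tilde x_{\tilde d},\tilde r_{\tilde d}))\leq C\epsilon\,\mu(\tilde B_{\tau^2R}(\tilde x_{\tilde d},\tilde r_{\tilde d}))$, and summing over the disjoint family $\{\tilde B_{\tau^2R}(\tilde x_{\tilde d},\tilde r_{\tilde d})\}$ gives the claim. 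Note that both directions of the neck structure theorem are needed here, not just the upper bound.
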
  

\begin{proof} From Lemma \ref{covering} there exists a finite set $\mathcal{C}$, maximal, such that $\tilde S \subset \bigcup_{x \in \mathcal C} \tilde B_{\tau^{3/2}}(x,r_x)$ and $\tilde {\mathcal{C}}  \subset \mathcal C$. We prove that the refined set $(\mathcal{C}, r_x)$ is an $(n-2, \delta)$--neck at scale 1. Clearly properties $n1), n5)$ are satisfied by the construction. 

\medskip
\noindent
For $n2)$, let $x \in \mathcal{C}$ then $x \in \tilde B_R(\tilde x, \overbar r_x)$ thus $x \in \mathcal{L}_{\tilde x, \overbar r_x} \cap \tilde B_R(\tilde x, \overbar r_x)$ and $(M, g(t), x)_{[-2\delta'^{-2},0]}$ is $(n-2, \delta'^2)$--self--similar at scale $\overbar r_x$ by construction of $\tilde {\mathcal{C}}.$ Thus from Lemma \ref{s-s/e-d} if $\delta' \leq \delta'(n, C_I, \Lambda, R, \delta, \delta_0)$, where $\delta_0$ an auxiliary constant to be fixed later, we have that $$\mathcal W_x(\delta_0^2 \delta^2 \overbar r_x^2) - \overbar W \leq \delta_0^2.$$
Since $\delta^2 \overbar r_x = r_x$ and $\xi< \delta_0 < \delta^2$ the monotonicity of the pointed entropy functional gives property $n2)$ and furthermore we have that 
\begin{equation}
\label{endrop}
\mathcal W_x(\delta_0 r^2)-\mathcal W_x(\delta_0^{-1} r^2) < \delta_0 
\end{equation}
for any $r \in [r_x, \delta^{-1}]$.    

For $n3)$ from \eqref{endrop} and Lemma \ref{s-s/e-d} there exists $x_{(r)}\in \tilde B_R(x,r)$ such that $(M, g(t), x_{(r)})_{[-2\xi_0^{-1},0]}$ is $(0, \xi_0)$--self--similar at scale $r$, provided $\delta_0 \leq \delta_0(n, C_I, \Lambda, R, \xi_0),$ where $\xi_0$ an auxiliary constant to be fixed. We now consider two cases for the scale $r.$ 

If $r \leq \overbar r_x$ then $\delta^2 \leq \frac{r}{\overbar r_x}$ and since  $(M, g(t), x_{(r)})_{[-2\xi_0^{-1},0]}$ is $(0, \xi_0)$--self--similar at scale $r$, $(M, g(t), \tilde x)_{[-2\delta'^{-2},0]}$ is $(n-2, \delta'^2)$--self--similar at scale $\overbar r_x$ and $x_{(r) }\in \tilde B_R(\tilde x, \overbar r_x)$ scaling by $\overbar r_x$ we apply Lemma \ref{higher order sym} . So we get that $(M ,g(t), x_{(r)})_{[-2\delta^{-3},0]}$ is $(n-2,\delta^3)$--self--similar at scale $r$ provided that $\xi_0 \leq \xi_0(n, C _I, \Lambda, R, \delta)$ with model flow $\mathbb R ^{n-2}\times \mathbb S^2$ which shows that  $(M ,g(t) , x)_{[-2\delta^{-2},0]}$ is $(n-2,\delta^2)$--self--similar at scale $r$.

If $r > \overbar r_x$ then $(M, g(t), \tilde x)_{[-2\delta'^{-2},0]}$ is $(n-2, \delta'^2)$--self--similar at scale $r$ and $x_{(r)} \in \tilde B_{2R}(\tilde x, r)$ thus for $\xi_0 \leq \xi_0(n, C_I, \Lambda, R, \delta)$ from Lemma \ref{higher order sym} we get that $(M, g, x_{(r)} )$ is $(n-2, \delta^2)$--self--similar at scale $r$ with model flow $\mathbb R ^{n-2}\times \mathbb S^2.$

For $n4)$ let $z_1 \in \tilde S \cap \tilde B_{2R}(x,r)$,  then there exists $z_2 \in \mathcal{C}$ such that $D_{\tau ^{3/2}R}(z_1, z_2) < r_{z_2}$. From $n5)$ and Lemma \ref{properties of D_R}\eqref{sd1} we have that 
$$r_{z_2}\leq \sigma r_{z_1} + \delta D_{2R}(z_1, z_2) \leq \sigma r_{z_1}+ \delta D_{\tau^{3/2}R}(z_1, z_2)\leq
\sigma r_{z_1}+\delta r_{z_2},$$
thus put together we have that $r_{z_2}\leq \frac{\sigma}{1-\delta}r_{z_1}.$ Again from $n5)$ we conclude that 
$$r_{z_1}\leq \sigma r_x+ \delta D_{2R}(z_1, x) \leq r(\sigma +\delta).$$ 
Putting together the above inequalities we get that 
$r_{z_2}\leq \frac{\sigma}{1-\delta}(\sigma +\delta)r < 5r$ which implies that $D_{\tau^{3/2}R}(z_1, z_2) < 5r.$ 

From Lemma \ref{properties}\eqref{4} we get that $d_{g(-r^2)}(z_1, z_2)\leq 5^{2C_I +1} \tau ^{3/2}R r < \tau^{5/4}Rr$ and so we conclude that $\tilde S \cap \tilde B_{2R}(x,r) \subset \tilde B_{\tau^{5/4}R,r}(\mathcal{C})$. Since  $\tilde B_{2R}(x,r) \subset \tilde B_{2R}(p,1)$ and $\tilde B_{2R}(p,1) \subset \tilde S$ we have that $\tilde B_{R}(x,r) \subset \tilde B_{\tau^{5/4}R,r}(\mathcal{C})$. 

Note that if we want the neck to be in $\tilde B_{2R}(p,1)$ then we can simply take $\mathcal{C}\cap \tilde B_{2R}(p,1)$ and this will now cover the ball $\tilde B_R(p,1)$ with center points in $\mathcal{C}\cap \tilde B_{2R}(p,1).$

\medskip
We now show the content estimates. Since $\displaystyle\bigcup_{x\in \mathcal{C}\cap \tilde B_{\frac{3R}{2}}(p,1)} \tilde B_{\tau ^2 R }(x,r_x) \subset \tilde B_{\frac{5R}{3}}(p,1) $, and the union is disjoint \cite[Lemma 4.1]{PanGian2} gives that
$$\sum_{x \in \mathcal{C}\cap \tilde B_{\frac{3R}{2}}(p,1)} \mu(\tilde B_{\tau^2R}(x,r_x)) \leq C(n, C_I)R^{n-2}, $$
thus
$$\sum_{x\in \mathcal{C}\cap \tilde B_{\frac{3R}{2}}(p,1)}r_x^{n-2} \leq C(n, C_I).$$  
It remains to estimate the $(n-2)$--content of $c$--balls. Define $$\mathcal{C}_c = \{x \in \mathcal{C}\cap \tilde B_{\frac{3R}{2}}(p,1) \mid x \text{ is a center of a c--ball } \}.$$  
Then from the covering of Lemma \ref{maximal neck} it follows that $\displaystyle \mu(\mathcal{C}_c) \leq \sum_{\tilde x_{\tilde d} \in \tilde{\mathcal{C}}\cap \tilde B_{\frac{28}{15}R}(p,1)}\mu(\mathcal{C}_c\cap \tilde B_{\tau R}(\tilde x_{\tilde d}, \tilde r_{\tilde d})),$ where from the maximality of $\tilde {\mathcal{C}}$ we know that each point is a center of a $\tilde d$--ball. 

Consider $x \in \mathcal{C}_c \cap \tilde B_{\tau R}(\tilde x_{\tilde d} , \tilde r_{\tilde d})$. Then from property $n5)$ and Lemma \ref{properties of D_R}\eqref{sd1} since $\tilde {\mathcal C}\subset \mathcal C$ we get that 

\begin{equation}
\label{smallness}
r_x \leq \sigma r_{\tilde x_{\tilde d}} + \delta D_{\tau R}(x, \tilde x_{\tilde d})\leq (\sigma \delta^2 + \delta)\tilde r_{\tilde d}.
\end {equation}
Since $x$ is a center of a $c$--ball we know that $\mathcal P_{r_x,\xi,4R}(x)\neq \emptyset$ so there exists $y \in \tilde B_{2R}(x, r_x)$ such that $\mathcal W_y(\xi r^2_x)-\overbar W < \xi.$ From \eqref{smallness} we have that $r_x < \tilde r_{\tilde d}$ thus monotonicity of the pointed entropy implies that the latter inequality holds at scale $\tilde r_{\tilde d}.$ Furthermore, from distance distortion \eqref{weak dist dist}, triangle inequality and \eqref{smallness} we get that 
$$d_{g(-\tilde r_{\tilde d}^2)}(y, \tilde x_{\tilde d}) \leq \tau R \tilde r_{\tilde d}+ K \tilde r_{\tilde d}+ 2R r_x < (2\tau R+ 2R(\sigma \delta^2 + \delta)) \tilde r_{\tilde d} < 2R \tilde r_{\tilde d},$$
thus in total $y \in P_{\tilde r_{\tilde d}, \xi, 4R}(\tilde x_{\tilde d})$ hence $\tilde B_{R}(y, \gamma \tilde r_{\tilde d}) \subset \tilde B_{4R, \gamma \tilde r_{\tilde d}}(P_{\tilde r_{\tilde d}, \xi, 4R}(\tilde x_{\tilde d})).$ 

Let now $z \in \tilde B_{R/2}(x, \gamma \tilde r_{\tilde d}).$ The distance distortion \eqref{weak dist dist} and triangle inequality give that 
$$d_{g(-\gamma^2\tilde r_{\tilde d}^2)}(z,y) \leq \frac{R}{2}\gamma \tilde r_{\tilde d} + K \gamma \tilde r_{\tilde d}+ 2R r_x.$$
Using \eqref{smallness} we get that 
$$d_{g(-\gamma^2\tilde r_{\tilde d}^2)}(z,y) \leq \bigg(\frac{R}2{}+ \frac{R}{100} + 2R(\sigma \delta^2 + \delta )\gamma^{-1}\bigg)\tilde r_{\tilde d}\gamma < R \gamma \tilde r_{\tilde d}$$
provided that $\delta < \delta(\gamma).$ This shows that $\tilde B_{R/2}(x, \gamma \tilde r_{\tilde d})\subset \tilde B_{4R, \gamma \tilde r_{\tilde d}}(P_{\tilde r_{\tilde d}, \xi, 4R}(\tilde x_{\tilde d})). $ 

Let a maximal cover of $\mathcal{C}_c \cap \tilde B_{\tau R}(\tilde x_{\tilde d}, \tilde r_{\tilde d})$ by balls $\tilde B_{R/2}(x, \gamma \tilde r_{\tilde d})$ with $x \in \mathcal C_c \cap \tilde B_{\tau R}(\tilde x_{\tilde d}, \tilde r_{\tilde d})$ so that $\tilde B_{R/10}(x, \gamma \tilde r_{\tilde d})$ are pairwise disjoint. Then using the non--collapsing estimate \eqref{non-coll}, the above inclusion and the fact that $\mathcal P _{\tilde r_{\tilde d}, \xi, 4R}(\tilde x_{\tilde d})$ has small $(n-2)$--content we get that the number $N$ of balls needed in this covering satisfies $N \leq C(n, C_I, \Lambda) \gamma^{2-n} \epsilon.$

From distance distortion \eqref{weak dist dist} and triangle inequality we have that $\tilde B_{2R}(x, \gamma \tilde r_{\tilde d}) \subset \tilde B_{2R}(p,1)$ thus using the covering and the upper bound of the neck structure theorem we get that
$$\mu (\mathcal{C}_c \cap \tilde B_{\tau R}(\tilde x_{\tilde d}, \tilde r_{\tilde d})) \leq \mu\bigg(\bigcup_x \tilde B_{R/2}(x, \gamma \tilde r_{\tilde d})\bigg) \leq C(n, C_I)N R^{n-2} \gamma^{n-2} \tilde r_{\tilde d}^{n-2}$$
$$\leq C(n, C_I, \Lambda)R^{n-2}\tilde r_{\tilde d}^{n-2}\epsilon.$$
From distance distortion \eqref{weak dist dist} and $K < \tau^3 R$ we get that $\tilde B_{2R}(\tilde x_{\tilde d}, \tau^3 \tilde r_{\tilde d}) \subset \tilde B_{\tau^2R}(\tilde x_{\tilde d}, \tilde r_{\tilde d})\subset \tilde B_{2R}(p,1).$
Thus since $\tilde x_{\tilde d}\in \mathcal C $ and $\tau^3 \tilde r_{\tilde d}> r_{\tilde x_{\tilde d}}$ the lower bound of the neck structure theorem applied to the ball $\tilde B_{2R}(\tilde x_{\tilde d}, \tau^3 \tilde r_{\tilde d})$ together with the above inclusion implies that
$$\mu (\mathcal{C}_c \cap \tilde B_{\tau R}(\tilde x_{\tilde d}, \tilde r_{\tilde d})) \leq  C(n, C_I, \Lambda)\epsilon \mu(\tilde B_{2R}(\tilde x_{\tilde d}, \tau^3 \tilde r_{\tilde d})) \leq C(n, C_I, \Lambda)\epsilon\mu(\tilde B_{\tau^2 R}(\tilde x_{\tilde d},\tilde r_{\tilde d})), $$
where $\tilde x_{\tilde d}\in \tilde B_{\frac{28}{15}R}(p,1).$

Putting together, we get that 
$$\mu(\mathcal{C}_c)\leq C(n, C_I, \Lambda) \epsilon \sum_{\tilde x_{\tilde d} \in \tilde{\mathcal{C}}\cap \tilde B_{\frac{28}{15}R}(p,1)} \mu(\tilde B_{\tau ^2 R}(\tilde x_{\tilde d}, \tilde r_{\tilde d})). $$
Using the fact that these balls are disjoint and that their union lies in $\tilde B_{\frac{5R}{3}}(p,1)$ we get from \cite[Lemma 4.1]{PanGian2} that 
$$\mu(\mathcal{C}_c)\leq C(n, C_I, \Lambda )\epsilon R^{n-2},$$  
which implies the desired estimates.
 \end{proof}

 \begin{REMARK} If we have a $c$--ball $\tilde B_R(x,r)$ such that $\mathcal P_{r, \xi, 4R}(x)$ is chosen with respect to $\overbar W \leq \inf_{\tilde B_{8R}(x,r)} \mathcal {W}_y(r^2 \xi^{-1})$ then a rescaling argument gives us the same covering conclusion with estimates rescaled by $r$. 
     
 \end{REMARK}

\begin{LEMMA} \label{inductive covering} Let $(M, g(t), p)_{[-2\xi^{-1},0]} \in \mathcal{F}(n,C_I, \Lambda) $, $M$ is orientable  and $R \geq R(n, C_I, \Lambda)$, $\xi \leq \xi(n, C_I, \Lambda, R)$ then 
$$\tilde B_{R}(p,1) \subset \bigcup_b \tilde B_R(x_b, r_b)\cup \bigcup_e \tilde B_R(x_e, r_e),$$
where $ r_{Rm}^{2R}(x_b) >  2r_b$ and $\mathcal P_{r_e, \xi, 4R}(x_e) = \emptyset$ with $\overbar W \leq \displaystyle \inf_{\tilde B_{8R}(p,1)}\mathcal W_{y}(\xi^{-1})$. Furthermore we have the content estimates 
$$\sum_b r_b^{n-2}+ \sum_e r_e ^{n-2} \leq C(n, C_I, \Lambda, R).$$
    
\end{LEMMA}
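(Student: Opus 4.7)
The plan is to iterate Lemma~\ref{d-ball dec} and Lemma~\ref{neck construction/estimates} in order to progressively eliminate $c$- and $d$-balls from the cover, leaving only $b$- and $e$-balls. The crucial quantitative feature driving this argument is that each application of either lemma produces $c$-balls whose total $(n-2)$-content is a factor $\epsilon$ smaller than that of the input ball, so fixing $\epsilon$ sufficiently small will give geometric convergence.

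First I would dispose of the trivial cases at $p$: if $r^{2R}_{Rm}(p)>2$ then $\tilde B_R(p,1)$ is itself a $b$-ball with $r_b=1$, and if $\mathcal P_{1,\xi,4R}(p)=\emptyset$ it is an $e$-ball; in either case the conclusion holds trivially. Otherwise I compare $\vol_{g(-\gamma^2)}\tilde B_{4R,\gamma}(\mathcal P_{1,\xi,4R}(p))$ with $\epsilon\gamma^2$. In the small-volume regime Lemma~\ref{d-ball dec} immediately produces a cover by $b$-, $c$-, and $e$-balls with total $c$-content at most $C\epsilon$; in the large-volume regime Lemma~\ref{neck construction/estimates} produces a cover by $d$-, $c$-, and $e$-balls with $d$- and $e$-content bounded by a constant and $c$-content at most $C\epsilon$. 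Since the $\tilde B_{\tau R}$ covering produced by Lemma~\ref{neck construction/estimates} is contained in the corresponding $\tilde B_R$ cover, the output is admissible for our statement after enlarging each constituent ball.

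In the subsequent iteration the only active pieces are the remaining $c$- and $d$-balls. Applying Lemma~\ref{neck construction/estimates} to each $c$-ball and then Lemma~\ref{d-ball dec} to each $d$-ball produced thereby (at the appropriate scale, via rescaling) constitutes one round. A direct bookkeeping shows that in such a round the newly produced $b$- and $e$-content is at most a constant $C_1=C_1(n,C_I,\Lambda,R)$ times the input $c$-ball content, while the newly produced $c$-content is at most $\epsilon K(n,C_I,\Lambda,R)$ times the input. Choosing $\epsilon\leq 1/(2K)$ makes this a contraction of ratio $\tfrac12$, and summing over rounds the accumulated $b$- and $e$-content is dominated by a convergent geometric series bounded by some $C(n,C_I,\Lambda,R)$.

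For termination I would exploit the smoothness of the flow up to $t=0$: the curvature radius $r^{2R}_{Rm}$ is uniformly bounded below by some $r_0>0$ on $\tilde B_R(p,1)$. Since each round only introduces balls at scales a factor $\leq\gamma$ smaller than the input, after finitely many rounds every remaining $c$- or $d$-ball has radius below $r_0/2$, and so automatically satisfies $r^{2R}_{Rm}(\cdot)>2r$; such balls are relabelled as $b$-balls. The hard part will be the bookkeeping at each scale, in particular verifying that the reference value $\overbar W\leq\inf_{\tilde B_{8R}(x,r)}\mathcal W_y(r^2\xi^{-1})$ required for every sub-ball is consistent with the initial choice $\overbar W\leq\inf_{\tilde B_{8R}(p,1)}\mathcal W_y(\xi^{-1})$, and that a single choice of $\xi,\gamma,\epsilon$ simultaneously meets the smallness demands of both lemmas at every scale. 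The first point follows from monotonicity of the pointed entropy in the scale parameter together with the $R$-scale distance inclusion results, and the second from fixing the constants in the order $R\to\epsilon\to\gamma\to\xi$ as dictated by Lemmas~\ref{d-ball dec} and~\ref{neck construction/estimates}.
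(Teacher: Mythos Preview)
Your proposal is correct and follows essentially the same route as the paper: alternate Lemma~\ref{neck construction/estimates} on $c$-balls with Lemma~\ref{d-ball dec} on the resulting $d$-balls, exploit the geometric decay of $c$-content by a factor comparable to $\epsilon$ per round, and terminate by smoothness. Two small remarks: the shrinking factor per round is not literally $\gamma$ (the neck construction outputs balls at scales governed by $\delta^2$ rather than $\gamma$), though any fixed factor $<1$ suffices; and your ordering of constants should include the neck parameter $\delta$, namely $R\to\epsilon\to\gamma\to\delta\to\xi$, since Lemma~\ref{neck construction/estimates} requires $\delta\leq\delta(n,C_I,\gamma)$ before $\xi$ is chosen.
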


\begin{proof} If $\tilde B_{R}(p,1)$ is a $b$--ball or an $e$--ball then we have nothing to prove. It suffices to consider the case that $\tilde B_R(p,1)$ is a $c$--ball or a $d$--ball. Assume $\tilde B_R(p,1)$ is a $c$--ball, namely $ r^{2R}_{Rm}(x) <2 $ and $\vol_{g(-\gamma^2)}(\tilde B_{4R, \gamma}(\mathcal P_{1, \xi, 4R}(p)) \geq \epsilon \gamma^2$. The argument is analogous if $\tilde B_R(p,1)$ is a $d$--ball. Let $R \geq R(n, C_I, \Lambda)$ , $\epsilon\leq \epsilon (n, C_I, \Lambda, R)$, $\gamma \leq \gamma(n, C_I, \Lambda, R, \epsilon)$, $\delta \leq \delta(n, C_I,\gamma)$, $\xi \leq \xi(n, C_I, \Lambda, R, \epsilon, \gamma, \delta)$ as in Lemma \ref{neck construction/estimates} and Lemma \ref{d-ball dec}. 

Applying Lemma \ref{neck construction/estimates} we have that 
\begin{equation}
\label{covering1}
\tilde B_R(p,1)\subset \bigcup_c \tilde B_R(x_c, r_c)\cup \bigcup _d \tilde B_R(x_d, r_d)\cup \bigcup_e \tilde B_R(x_e,r_e),
\end{equation}
with 
\begin{equation}\label{est1}
 \begin{aligned}   
\sum_d r_d^{n-2}+\sum_e r_e^{n-2} &\leq C(n, C_I) \\
\sum _c r_c^{n-2}&\leq C(n, C_I, \Lambda)\epsilon.
\end{aligned}
\end{equation}

To each $d$--ball we apply Lemma \ref{d-ball dec} thus
\begin{equation}\label{cov2}
\tilde B_{R}(x_d,r_d) \subset \bigcup_b \tilde B_R(x_b,r_b)\cup \bigcup _c \tilde B_R(x_c,r_c) \cup \bigcup_e \tilde B_R(x_e,r_e) 
\end{equation}
with estimates 
\begin{equation}\label{est2}
\begin{aligned}
\sum_b r_b^{n-2} +\sum_e r_e^{n-2} &\leq C(n, C_I, \Lambda, R, \gamma)r_d^{n-2},   \\
\sum_cr_c^{n-2}&\leq C(n, C_I, \Lambda, R)r_d^{n-2}\epsilon.
\end{aligned}
\end{equation}
Putting together \eqref{covering1}, \eqref{cov2}, \eqref{est1} and \eqref{est2} we get that 
$$\tilde B_R (p,1) \subset \bigcup_b \tilde B_R(x_b^{(1)}, r_b^{(1)})\cup \bigcup_c\tilde B_R(x_c^{(1)}, r_c^{(1)})\cup \bigcup_e\tilde B_{R}(x_e^{(1)},r_e^{(1)}),$$
with 
$$\sum_b (r_b^{(1)}) ^{n-2} \leq C(n, C_I, \Lambda, R, \gamma)C(n, C_I, \Lambda, R),$$
$$\sum_e(r_e^{(1)})^{n-2} \leq \overbar C(n, C_I, \Lambda, R, \gamma),$$
$$\sum_c (r_c^{(1)})^{n-2} \leq \overbar C(n, C_I, \Lambda, R)\epsilon,$$ 
where the constants are chosen with respect to the combined constants from estimates of Lemma \ref{neck construction/estimates} and Lemma \ref{d-ball dec}. 

Assume that we have applied the combination of Lemma \ref{neck construction/estimates} and Lemma \ref{d-ball dec} $i$--times and we have 
$$\tilde B_R(p,1) \subset \bigcup_{j=1}^i \bigcup_b \tilde B_R(x_b^{(j)}, r_b^{(j)}) \cup \bigcup _e\tilde B_R (x_e^{(j)}, r_e^{(j)})\cup \bigcup_c \tilde B_R(x_c^{(i)}, r_c^{(i)}),$$
with estimates $$\sum_{j=1}^i \sum_b (r_b^{(j)})^{n-2} \leq C(n, C_I, \Lambda, R, \gamma)C(n, C_I, \Lambda, R) \sum_{j=0}^{i-1}(\overbar C(n, C_I, \Lambda, R) \epsilon)^{j}, $$
$$\sum_{j=1}^i\sum_e (r_e^{(j)})^{n-2} \leq \overbar C(n, C_I, \Lambda, R, \gamma )\sum_{j=0}^{i-1}(\overbar C(n, C_I, \Lambda, R) \epsilon)^{j},$$
$$\sum_c (r_c^{(i)})^{n-2} \leq (\overbar C(n, C_I, \Lambda, R)\epsilon)^{i},$$
where the constants are as in step 1. Then we apply Lemma \ref{neck construction/estimates} to each $c$--ball which gives balls with content estimates 
$$\sum_e (r_e^{(i+1)})^{n-2} + \sum_d (r_d^{(i+1)})^{n-2} \leq C(n, C_I) (\overbar C(n, C_I, \Lambda, R)\epsilon)^i,$$
$$\sum_c (r_c^{(i+1)})^{n-2} \leq C(n, C_I, \Lambda)\epsilon (\overbar C(n, C_I, \Lambda, R)\epsilon )^i.$$
To each $d$--ball that we get we apply Lemma \ref{d-ball dec} to get extra $e$--balls, $b$--balls, $c$--balls that satisfy the estimates 
$$\sum_e (r_e^{(i+1)})^{n-2} + \sum_b (r_b^{(i+1)})^{n-2} \leq C(n, C_I, \Lambda, R, \gamma)C(n, C_I) (\overbar C(n, C_I, \Lambda, R)\epsilon)^i,$$
$$\sum_c (r_c^{(i+1)})^{n-2}\leq C(n, C_I, \Lambda, R) \epsilon C(n, C_I) (\overbar C(n, C_I, \Lambda, R)\epsilon)^i.$$ 
If we put together the $(n-2)$--content of the new $c$--balls is 
$$\sum_c (r_c^{(i+1)})^{n-2}\leq (\overbar C(n, C_I, \Lambda, R )\epsilon)^i \epsilon \bigg( C(n, C_I, \Lambda, R)C(n, C_I)+$$ $$+C(n, C_I, \Lambda))\bigg)  
=(\overbar C(n, C_I, \Lambda, R)\epsilon)^{i+1}.$$ 
Similarly for the $b$--balls and $e$--balls we have that 
$$\sum_{j=1}^{i+1} \sum_b  (r_b^{(j)})^{n-2} \leq C(n, C_I, \Lambda, R, \gamma)C(n, C_I, \Lambda, R)\sum_{j=0}^{i}(\overbar C(n, C_I, \Lambda, R)\epsilon)^j.$$

$$\sum_{j=1}^{i+1}(r_e^{(j)}) ^{n-2} \leq \overbar C(n, C_I, \Lambda, R, \gamma)\sum_{j=0}^{i-1}(\overbar C(n, C_I, \Lambda, R)\epsilon )^{j}+$$
$$+ (\overbar C(n, C_I, \Lambda, R)\epsilon)^i\bigg(C(n, C_I, \Lambda, R, \gamma)C(n, C_I, \Lambda, R) + C(n, C_I)\bigg)=$$
$$\overbar C(n, C_I, \Lambda, R, \gamma)\sum_{j=0}^i (\overbar C(n, C_I, \Lambda, R)\epsilon)^j,$$ 
which finishes the induction step. 

The process must terminate after finitely many steps since the flow is smooth. Fixing $\epsilon = \epsilon (n, C_I, \Lambda, R)$ so that the series in the estimates converge and $\gamma= \gamma(n, C_I, \Lambda, R, \epsilon)$, $\delta = \delta(n, C_I, \gamma)$ gives the proof of the lemma. 
\end{proof}

\begin{LEMMA}\label{v-ball dec} Let $(M, g(t), p)_{[-2,0]}\in \mathcal{F}{(n,C_I, \Lambda)}$, $M$ orientable and $R \geq R(n,C_I, \Lambda)$. Then there exists $w_0(n, C_I, \Lambda, R)>0$ such that 
$$\tilde B_R(p,1) \subset \bigcup_b \tilde B_R (x_b, r_b)\cup \bigcup_w \tilde B_R (x_w, r_w),$$
where $ r_{Rm}^{2R}(x_b)>  2r_b$, $\displaystyle\inf_{\tilde B_{2R}(x_w, r_w)}\mathcal W_x(r_w^2) \geq \inf_{\tilde B_{2R}(p,1)}\mathcal W_y(1) + w_0$ and content estimates 
$$\sum_b r_b^{n-2} + \sum_w r_w^{n-2}\leq C(n, C_I, \Lambda, R).$$
    
\end{LEMMA}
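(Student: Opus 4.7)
The plan is to apply Lemma \ref{inductive covering} at the scale $\sqrt{\xi}$, after first covering $\tilde B_R(p,1)$ by subballs of that size, and then to convert each resulting $e$-ball into a bounded number of $w$-balls of radius $\sqrt{\xi}\,r_e$. The main obstruction is that Lemma \ref{inductive covering} requires a flow on $[-2\xi^{-1},0]$ with $\xi=\xi(n,C_I,\Lambda,R)$ small, while only the interval $[-2,0]$ is available; hence $\sqrt{\xi}$ is the largest scale at which one can directly invoke it.

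First, I would select a maximal collection $\{\tilde B_R(x_f,\sqrt{\xi})\}_f$ covering $\tilde B_R(p,1)$ with $\tilde B_{R/2}(x_f,\sqrt{\xi})$ pairwise disjoint (as in the opening step of Lemma \ref{d-ball dec}); the non-collapsing and non-inflating estimates bound the number of such $x_f$ by $C(n,C_I,\Lambda,R)\xi^{-n/2}$. After the parabolic rescaling $\tilde g(t)=\xi^{-1}g(\xi t)$ each subball becomes the scale-$1$ ball of a flow in $\mathcal F(n,C_I,\Lambda)$ defined on $[-2\xi^{-1},0]$, so Lemma \ref{inductive covering} applies. For each $x_f$ I would take $\overbar W^{(f)}:=\inf_{\tilde B_{8R}(x_f,\sqrt{\xi})}\mathcal W_y(1)$, which in the rescaled picture equals $\inf_{\tilde B_{8R}^{\tilde g}(x_f,1)}\mathcal W^{\tilde g}_y(\xi^{-1})$ and so meets the hypothesis of Lemma \ref{inductive covering}. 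Distance distortion \eqref{weak dist dist} together with $R\geq R(n,C_I,\Lambda)$ large guarantees $\tilde B_{8R}(x_f,\sqrt{\xi})\subset \tilde B_{2R}(p,1)$, hence $\overbar W^{(f)}\geq E_0$ with $E_0:=\inf_{\tilde B_{2R}(p,1)}\mathcal W_y(1)$. The resulting $e$-balls then satisfy $\mathcal W_y(\xi r_e^2)\geq \overbar W^{(f)}+\xi\geq E_0+\xi$ for every $y\in \tilde B_{2R}(x_e,r_e)$.

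Next, I would convert each $e$-ball into $w$-balls by setting $r_w:=\sqrt{\xi}\,r_e$ (so that $r_w^2=\xi r_e^2$) and covering $\tilde B_R(x_e,r_e)$ with at most $C(n,C_I,\Lambda,R)\xi^{-n/2}$ balls $\tilde B_R(x_{w,j},r_w)$, whose centers are chosen by a Vitali argument inside $\tilde B_R(x_e,r_e)$ with the additional property $\tilde B_{2R}(x_{w,j},r_w)\subset \tilde B_{2R}(x_e,r_e)$; this containment is forced by Lemma \ref{properties}\eqref{3} provided $R$ is taken large enough relative to $K$. The $e$-ball entropy bound then restricts to each $w$-ball, giving
$$\inf_{\tilde B_{2R}(x_{w,j},r_w)}\mathcal W_y(r_w^2)\geq E_0+\xi,$$
so the lemma is obtained with $w_0=\xi$.

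The content estimate is then assembled by multiplying: Lemma \ref{inductive covering} gives a rescaled content per subball bounded by $C(n,C_I,\Lambda,R)\xi^{(n-2)/2}$; there are at most $\xi^{-n/2}$ subballs; and each $e$-ball spawns at most $\xi^{-n/2}$ many $w$-balls of radius $\sqrt{\xi}\,r_e$, which contributes an additional factor $\xi^{-1}$ to the $(n-2)$-content. Once $\xi=\xi(n,C_I,\Lambda,R)$ is fixed, all these factors are absorbed into the final constant $C(n,C_I,\Lambda,R)$. I expect the main technical difficulty to be the precise bookkeeping of the containment $\tilde B_{2R}(x_{w,j},r_w)\subset \tilde B_{2R}(x_e,r_e)$ at the $2R$-scale distance, which should follow from the properties in Lemma \ref{properties of D_R} and the distance distortion estimates, exactly as in the analogous arguments used in Lemmas \ref{neck construction/estimates} and \ref{inductive covering}.
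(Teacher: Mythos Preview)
Your proposal is correct and follows essentially the same route as the paper: cover $\tilde B_R(p,1)$ by subballs of radius $\sqrt{\xi}$, apply Lemma \ref{inductive covering} at that scale with $\overbar W$ tied to $\inf_{\tilde B_{2R}(p,1)}\mathcal W_y(1)$, and then subdivide each $e$-ball into a controlled number of $w$-balls of radius $\sqrt{\xi}\,r_e$ so that the $e$-ball entropy gap $\mathcal W_y(\xi r_e^2)\geq \overbar W+\xi$ becomes the required $w$-ball condition with $w_0=\xi$. The only cosmetic difference is that the paper uses a single reference value $\overbar W=\inf_{\tilde B_{2R}(p,1)}\mathcal W_y(1)$ for all subballs rather than your per-ball $\overbar W^{(f)}$, and absorbs the $\xi$-powers directly into the final constant rather than tracking them explicitly.
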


\begin{proof} Choose a maximal covering of $\tilde B_{R}(p,1)$ by balls $\{\tilde B_R (x_f, \sqrt \xi)\}_{f=1}^N$ where $x_f \in \tilde B_R(p,1)$ and the number of balls $N \leq C(n, C_I, \Lambda, R, \xi)$ (we have seen how to construct such coverings and the bound on $N$ follows from the non--collapsing and the non--inflating estimates \eqref{non-coll}, \eqref{non-inf} respectively). We will  apply Lemma \ref{inductive covering} to each $\tilde B_R(x_f, \xi)$ fixing $\overbar W = \inf_{\tilde B_{2R}(p,1)}\mathcal W_y(1).$ 

Since $\tilde B_{8R}(x_f, \sqrt \xi) \subset \tilde B_{2R}(p,1)$ (for $\xi$ small enough) it follows that $\inf_{\tilde B_{8R}(x_f, \sqrt \xi)}\mathcal W_y(1) \geq \inf_{\tilde B_{2R}(p,1)}\mathcal W_y(1) $. Therefore, if $R \geq R (n, C_I,\Lambda)$ and $\xi \leq \xi(n, C_I, \Lambda, R)$ we can apply Lemma \ref{inductive covering} at scale $\sqrt \xi$ to obtain a covering

$$\tilde B_R(x_f, \sqrt{\xi}) \subset \bigcup_b \tilde B_R (x_b, r_b) \cup\bigcup _e \tilde B_R (x_e, r_e),$$
where $ r_{Rm}^{2R}(x_b)> 2r_b$ and $\mathcal P_{r_e, \xi, 4R}(x_e) = \emptyset$, that satisfies the content estimates

$$\sum_b r_b^{n-2}+ \sum_e r_e^{n-2}\leq C(n, C_I, \Lambda, R).$$
Since $\mathcal P_{r_e, \xi, 4R}(x_e) = \emptyset$, for any $x \in \tilde B_{2R}(x_e, r_e) $ we know that
$$\mathcal W_x(r_e^2\xi)\geq  \overbar{W} + \xi = \inf_{\tilde B_{2R}(p,1)}\mathcal W_y(1) + \xi. $$

Set $\xi = \xi(n, C_I, \Lambda, R)$, $w_0 = \xi$ and choose a maximal covering for each $\tilde B_R (x_e, r_e)$ by $\tilde B_R(x_e^{(j)}, r_e \sqrt\xi) \subset \tilde B_{2R}(x_e, r_e)$ for each $j=1, \dots, L$. Then from the non--collapsing \eqref{non-coll} and the non--inflating \eqref{non-inf} we get that $L \leq C(n, C_I, \Lambda, R)$ and by construction we also have
$$\inf_{\tilde B_{2R}(x_e^{(j)}, r_e\sqrt \xi)}\mathcal  W_x(r_e^2\xi) \geq \inf_{\tilde B_{2R}(x_e, r_e)} \mathcal W_x(r_e^2 \xi)\geq \:  \overbar W + \xi.$$
Thus setting $r_w = r_e\sqrt \xi$ we get the desired decomposition. 
\end{proof}

 Applying recursively Lemma \ref{v-ball dec}, we show that after finitely many steps there will be no more $w$-balls left, due to the lower entropy bound and Perelman's Harnack inequality \eqref{PH in}. This suffices to complete the proof of Theorem \ref{b-ball dec} below. 

\begin{proof}[Proof of Theorem \ref{b-ball dec}] We apply Lemma \ref{v-ball dec} to $\tilde B_R(p,1)$ to obtain a covering
$$\tilde B_R(p,1)\subset \bigcup_b\tilde B_R(x_b^{(1)}, r_b^{(1)})\cup \bigcup_w \tilde B_R(x_w^{(1)}, r_w^{(1)}),$$
    with $ r_{Rm}^{2R}(x_b^{(1)}) > 2r_b^{(1)}$ and 
\begin{equation*}
\begin{aligned}
\inf_{\tilde B_{2R}(x_w^{(1)}, r_w^{(1)})}\mathcal  W_x((r_v^{(1)})^2) &\geq \inf_{\tilde B_{2R}(p,1)} \mathcal W_y (1)+ w_0,\\
\sum_b (r_b^{(1)})^{n-2}+ \sum_w (r_w^{(1)})^{n-2} &\leq C(n, C_I, \Lambda, R).
\end{aligned}
\end{equation*}
Now, iteratively appling Lemma \ref{v-ball dec} to each $w$-ball,  we obtain after $i$--steps a covering
$$\tilde B_R (p,1) \subset \bigcup_{j=1}^i \bigcup _b \tilde B_R (x_b^{(j)}, r_b^{(j)}) \cup \bigcup_w \tilde B_R (x_w^{(i)}, r_w^{(i)}),$$ 
with 
\begin{equation*}
\begin{aligned}
\sum_{j=1}^i \sum_b (r_b^{(j)})^{n-2} \leq i (C(n, C_I, \Lambda, R))^i,\\
\sum_w (r_w^{(i)})^{n-2}\leq (C(n, C_I, \Lambda, R))^i,\\
\inf_{\tilde B_{2R}(x_w^{(i)}, r_w^{(i)})} \mathcal W_x((r_w^{(i)})^2) \geq \inf_{\tilde B_{2R}(p,1)} \mathcal W_y(1) + i w_0.
\end{aligned}
\end{equation*}

In particular, if $i \geq \frac{2\Lambda}{w_0}$ then 
$$\inf_{\tilde B_{2R}(x_w^{(i)}, r_w^{(i)})} \mathcal W_x((r_w^{(i)})^2) \geq \Lambda > 0,$$
which contradicts Perelman's Harnack inequality. Thus the process terminates after at most $i=i(n,C_I,\Lambda, R)$ steps, which suffices to prove the result.
\end{proof}

\section{Applications of the decomposition theorem}

\inlinesubsection{Curvature bounds} We use the decomposition theorem to prove $L^1$--bounds on the curvature tensor. We prove in fact the following stronger bound on the curvature radius from which Theorem \ref{intro:thm} immediately follows.   

\begin{theorem}\label{curv radius int} Let $(M, g(t), p)_{[-2,0]}\in \mathcal{F}(n,C_I, \Lambda)$ then for $R \geq R(n, C_I, \Lambda)$
$$\int_M ( r_{Rm}^{2R}(x))^{-2} dV_{g(0)} \leq C(n, C_I, \Lambda, R, \vol_{g(-1)}(M)).$$
\end{theorem}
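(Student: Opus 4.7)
The plan is to deduce Theorem \ref{curv radius int} from the $b$--ball decomposition Theorem \ref{b-ball dec}, together with (i) a pointwise lower bound on $r_{Rm}^{2R}$ inside each $b$--ball and (ii) a volume estimate on the same ball. Once Theorem \ref{curv radius int} is established, Theorem \ref{intro:thm} follows immediately from the trivial pointwise inequality $|Rm|(y,0)\leq r_{Rm}^{2R}(y)^{-2}$. (If $M$ is non--orientable, we may apply the theorem on the orientable double cover, which is still in $\mathcal{F}(n,C_I,\Lambda)$ and whose volume at time $-1$ is twice that of $M$.)

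First, I cover $M$ at time $-1$ by finitely many scale--distance balls. Choose a maximal collection $\{p_i\}_{i=1}^N\subset M$ such that the $g(-1)$--balls $B_{g(-1)}(p_i,R/2)$ are pairwise disjoint; by maximality $M\subset\bigcup_i B_{g(-1)}(p_i,R)\subset\bigcup_i \tilde B_R(p_i,1)$. The non--collapsing estimate \eqref{non-coll} (applied after a mild rescaling if needed) bounds $N\leq N(n,C_I,\Lambda,R,\vol_{g(-1)}(M))$. Applying Theorem \ref{b-ball dec} to each $\tilde B_R(p_i,1)$ yields a covering by balls $\tilde B_R(x_b^{(i)},r_b^{(i)})$ with $r_{Rm}^{2R}(x_b^{(i)})>2r_b^{(i)}$, $r_b^{(i)}<1$, and $\sum_b (r_b^{(i)})^{n-2}\leq C(n,C_I,\Lambda,R)$.

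Next, the key pointwise estimate: for each $b$ and every $y\in \tilde B_R(x_b,r_b)$, I claim $r_{Rm}^{2R}(y)\geq r_b/2$. Indeed, for any $z\in \tilde B_{2R}(y,r_b/2)$, the almost--triangle inequality of Lemma \ref{properties of D_R}\eqref{almost triangle} (with $R_1=R$, $R_2=2R$) gives
\begin{equation*}
D_{2R}(x_b,z)\leq \frac{2R}{2R-K}\bigl(D_R(x_b,y)+D_{2R}(y,z)\bigr)\leq \frac{2R}{2R-K}\bigl(r_b+r_b/2\bigr)<2r_b,
\end{equation*}
for $R\geq R(n,C_I)$ large. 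Therefore $z\in \tilde B_{2R}(x_b,2r_b)$ and, since $[-(r_b/2)^2,0]\subset[-(2r_b)^2,0]$, the hypothesis $r_{Rm}^{2R}(x_b)>2r_b$ yields $|Rm|_{g(t)}(z)\leq (2r_b)^{-2}\leq (r_b/2)^{-2}$, which shows $r_{Rm}^{2R}(y)\geq r_b/2$. Hence $(r_{Rm}^{2R}(y))^{-2}\leq 4 r_b^{-2}$ throughout $\tilde B_R(x_b,r_b)$.

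Finally, the volume bound. On $\tilde B_{2R}(x_b,2r_b)\times[-(2r_b)^2,0]$ we have the uniform curvature bound $|Rm|\leq (2r_b)^{-2}$. Hence, after rescaling by $r_b$, the ball $\tilde B_R(x_b,r_b)$ sits at time $-1$ in a region of uniformly bounded geometry, and a standard Bishop--Gromov--type volume comparison gives $\vol_{g(-r_b^2)}(\tilde B_R(x_b,r_b))\leq C(n,R)r_b^n$; combining with the bounded evolution of the volume form under $|Scal|\leq C r_b^{-2}$ on the time interval $[-r_b^2,0]$ (cf.\ \eqref{volume comp}) yields $\vol_{g(0)}(\tilde B_R(x_b,r_b))\leq C(n,C_I,R)r_b^n$. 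Combining the pointwise and volume bounds,
\begin{equation*}
\int_{\tilde B_R(x_b,r_b)}(r_{Rm}^{2R})^{-2}\,dV_{g(0)}\leq 4r_b^{-2}\cdot C(n,C_I,R)r_b^n=C(n,C_I,R)r_b^{n-2}.
\end{equation*}
Summing over $b$ in each initial covering ball (where $\sum_b r_b^{n-2}\leq C(n,C_I,\Lambda,R)$) and then over the $N$ initial balls gives the theorem. The main technical step is the lower bound on $r_{Rm}^{2R}$ in a $b$--ball; this relies crucially on comparing scale--distances at $R$ and $2R$ and is the reason Theorem \ref{b-ball dec} is formulated with both scales in play.
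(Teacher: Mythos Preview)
Your proof is correct and follows essentially the same strategy as the paper: cover $M$ by unit--scale balls, apply Theorem \ref{b-ball dec} to each, obtain a pointwise lower bound on $r_{Rm}^{2R}$ inside every $b$--ball, bound the $g(0)$--volume of each $b$--ball by $Cr_b^n$, and sum using the content estimate. One small technical slip: your invocation of Lemma \ref{properties of D_R}\eqref{almost triangle} with $R_1=R$, $R_2=2R$ to bound $D_{2R}(x_b,z)$ violates the hypothesis $R_1,R_2\geq 2R-K$ (since $R<2R-K$ for $R$ large); the fix is immediate---first use Lemma \ref{properties of D_R}\eqref{sd1} to get $D_{2R}(x_b,y)\leq D_R(x_b,y)<r_b$, then apply the almost--triangle inequality at the $2R$ level with $R_1=R_2=2R$.

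There are two mild differences in execution worth noting. For the pointwise lower bound $r_{Rm}^{2R}(y)\gtrsim r_b$, the paper instead cites that $r_{Rm}^{2R}$ is $2R$--scale $(\sigma,\sigma)$--Lipschitz (from \cite{PanGian2}), which gives $r_b<2r_{Rm}^{2R}(y)$ directly; your containment argument $\tilde B_{2R}(y,r_b/2)\subset\tilde B_{2R}(x_b,2r_b)$ is more self--contained and avoids that external reference. For the volume bound, the paper appeals to the non--inflating estimate \eqref{non-inf} together with \eqref{volume comp}, whereas you use the curvature control on $b$--balls to run Bishop--Gromov; both are valid, and your route has the advantage of not needing the side condition in \eqref{non-inf}.
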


\begin{proof}  We may assume that $M$ is orientable since else we can pass to the orientable double cover. Consider a maximal covering of $M$ by $\tilde B_R (x_f,1)$ where $R \geq R(n, C_I, \Lambda)$ so that Theorem \ref{b-ball dec} holds. From the non--collapsing \eqref{non-coll} the number of balls $N$ in this covering satisfies $N \leq v_1^{-1}\vol_{g(-1)}(M)$. To each ball $\tilde B_R(x_f,1) $ we may apply Theorem \ref{b-ball dec} to obtain a covering 
$$\tilde B_{R}(x_f, 1) \subset \bigcup_b \tilde B_{R}(x_b, r_b),$$
with $r_{Rm}^{2R}(x_b) > 2r_b$ and 
$$\sum_b r_b^{n-2}\leq C(n, C_I, \Lambda, R).$$
Then we have that
$$\int_M ( r_{Rm}^{2R}(x))^{-2}dV_{g(0)} \leq \sum_f \int_{\tilde B_{R}(x_f,1)} ( r_{Rm}^{2R}(x))^{-2}dV_{g(0)} \leq N\sum_b \int_{\tilde B_{R}(x_b,r_b)}(r_{Rm}^{2R}(x))^{-2}dV_{g(0)}.$$
Since the curvature radius function is $2R$--scale $(\sigma, \sigma)$--Lipschitz for $\sigma < \frac{4}{3}$, see \cite[Lemma 5.1]{PanGian2}, we have that for any $x\in \tilde B_R (x_b, r_b)$
$$ r_{Rm}^{2R}(x_b) \leq \sigma  r_{Rm}^{2R}(x) + \sigma D_{2R}(x,x_b)<\sigma  r_{Rm}^{2R}(x) +\sigma r_b. $$
Since $ r_{Rm}^{2R}(x_b) > 2r_b$ we get that 
    $$ r_b <  2r_{Rm}^{2R}(x),$$ 
for any $x \in \tilde B_R(x_b,r_b).$
Putting together, we have that 
$$\int_M ( r_{Rm}^{2R}(x))^{-2}dV_{g(0)} \leq C(n, C_I, \Lambda, \vol_{g(-1)}(M))  \sum_br_b^{-2}\vol_{g(0)}\tilde B_{R}(x_b,r_b).$$
Using \eqref{volume comp} and the non--inflating estimate \eqref{non-inf} (since $Rr_b <1$ by construction) together with the content estimates of Theorem \ref{b-ball dec} we get that   $$\int_M ( r_{Rm}^{2R}(x))^{-2}dV_{g(0)} \leq C(n, C_I, \Lambda, R, \vol_{g(-1)}(M))\sum_b r_b^{n-2} \leq C(n, C_I, \Lambda, R, \vol_{g(-1)}(M)),$$
    which shows the desired integral estimate. 
\end{proof}

\noindent Now we can apply Theorem \ref{curv radius int} to finish the proof of Theorem \ref{intro:thm}.

\begin{proof}[Proof of Theorem \ref{intro:thm}]
The proof of Theorem \ref{intro:thm} follows as in the analogous case in \cite{PanGian2}, by appropriately rescaling and translating in time the estimate of Theorem \ref{curv radius int}.
\end{proof}

\noindent
\inlinesubsection{Content estimates of the top quantitative stratum} Define

$$\mathcal{S}^k_{\eta,r^2} = \bigg\{x \in M \mid (M,g(t),x) \text{ is not } (k+1, \eta)\text{--self}\text{--similar at scale } s \text{ for any } s\in[r,1]  \bigg\}.$$ 
If $k = n-2$ then we have the top quantitative stratum 
$$\mathcal{S}^{n-2}_{\eta,r^2}=\bigg\{x \in M \mid (M,g(t),x) \text{ is not } (n, \eta) \text{--self}\text{--similar } \text{ at scale } s \text{ for any } s\in[r,1]  \bigg\}.$$ 
We prove the following content estimate.

\begin{theorem}\label{content est} Let $(M,g(t),p)_{[-2,0]}\in \mathcal{F}(n,C_I, \Lambda)$, $R \geq R(n, C_I, \Lambda)$, $\eta, r \in(0,1)$ then 
$$\vol_{g(0)} ( \mathcal{S}^{n-2}_{\eta,r^2}\cap \tilde B_R(p,1)) \leq C(n, C_I, \Lambda, R, \eta) r^2. $$
\end{theorem}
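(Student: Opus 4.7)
The strategy is to combine the $b$--ball decomposition of Theorem \ref{b-ball dec} with a quantitative converse to the $\eta$--regularity statement of Theorem \ref{eta}, which links the curvature radius to the quantitative singular stratum. As a preliminary step, just as in the proof of Theorem \ref{curv radius int}, I would reduce to the orientable case by passing to the orientable double cover so that Theorem \ref{b-ball dec} is applicable.

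The key auxiliary lemma to establish is the following: for every $\eta > 0$ there exists $K = K(n, C_I, \Lambda, R, \eta)$ such that whenever $r^{2R}_{Rm}(x) \geq K s$ for some $s \in (0,1]$, the pointed flow $(M, g(t), x)$ is $(n, \eta)$--self--similar at scale $s$, with the Gaussian soliton as model. The proof is by a standard contradiction/compactness argument: given a sequence of counterexamples $(M_j, g_j(t), x_j, s_j)$, the parabolically rescaled flows $s_j^{-2}g_j(s_j^2\,\cdot)$ are again in $\mathcal{F}(n, C_I, \Lambda)$ by scale invariance of \eqref{eqn:intro_type_i} and \eqref{eqn:intro_entropy}, and satisfy $|Rm|\to 0$ on parabolic regions of exploding radius about $x_j$. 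Hamilton compactness (Theorem \ref{compactness}\eqref{I}) combined with \eqref{non-inf} and \eqref{non-coll} forces a subsequential limit that is a complete, non--collapsed, non--compact flat Ricci flow, hence isometric to the Gaussian soliton $\mathbb{R}^n$. Since $(n,\eta)$--self--similarity is an open condition in the Cheeger--Gromov--Hamilton topology, this contradicts the assumption for large $j$. Taking the contrapositive at $s = r$, every $x \in \mathcal{S}^{n-2}_{\eta, r^2}$ satisfies $r^{2R}_{Rm}(x) \leq K(\eta)\, r$.

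With this in hand, I would apply Theorem \ref{b-ball dec} to obtain a covering $\tilde B_R(p,1) \subset \bigcup_b \tilde B_R(x_b, r_b)$ with $r^{2R}_{Rm}(x_b) > 2r_b$ and $\sum_b r_b^{n-2} \leq C(n, C_I, \Lambda, R)$. For any $b$--ball meeting $\mathcal{S}^{n-2}_{\eta, r^2}$, pick $x$ in the intersection; the $(2R)$--scale $(\sigma, \sigma)$--Lipschitz property of $r^{2R}_{Rm}$ from \cite[Lemma 5.1]{PanGian2} (with $\sigma < 4/3$) together with the bound above gives
\begin{equation*}
2r_b < r^{2R}_{Rm}(x_b) \leq \sigma r^{2R}_{Rm}(x) + \sigma D_{2R}(x, x_b) \leq \sigma K(\eta)\, r + \sigma r_b,
\end{equation*}
which yields $r_b \leq C'(\eta)\, r$. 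On each such ball, \eqref{volume comp} together with the non--inflating estimate \eqref{non-inf} (valid since $Rr_b < 1$ in the non--trivial range of $r$) gives $\vol_{g(0)}(\tilde B_R(x_b, r_b)) \leq C(n)\, R^n\, r_b^n$. Summing only over $b$--balls that meet $\mathcal{S}^{n-2}_{\eta, r^2}$, I would conclude
\begin{equation*}
\vol_{g(0)}\bigl(\mathcal{S}^{n-2}_{\eta, r^2} \cap \tilde B_R(p, 1)\bigr) \leq \sum_b C\, R^n\, r_b^n \leq C\, R^n\, (C'(\eta)\,r)^2 \sum_b r_b^{n-2} \leq C(n, C_I, \Lambda, R, \eta)\, r^2.
\end{equation*}

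The main obstacle is the quantitative compactness claim of the second paragraph: one must track the time windows, since $(n, \eta)$--self--similarity at scale $s$ nominally requires the flow to exist on $[-2\eta^{-1}s^2, 0]$, and after rescaling by $s_j \to 0$ these intervals lengthen correctly, but care is needed to rule out closed flat limits (e.g.\ flat tori). For the relevant regime of small $r$, this is handled by the non--inflation estimate on the rescaled flow which forces volume growth and hence a non--compact limit; for $r$ bounded below by an $\eta$--dependent constant, the asserted inequality follows trivially by comparison with the total volume of $\tilde B_R(p,1)$.
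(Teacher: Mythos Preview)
Your argument is correct and reaches the same endpoint as the paper's, but via a genuinely different auxiliary lemma. The paper does not prove your converse to $\eta$--regularity; instead it establishes a scalar curvature \emph{gap lemma} (Lemma~\ref{gap lemma}): a $(0,\eta')$--self--similar point that is \emph{not} $(n,\eta^2)$--self--similar satisfies $|\mathrm{Scal}|(z,t) \geq \epsilon/|t|$ on a definite time window. For $y \in \mathcal S^{n-2}_{\eta,r^2} \cap \tilde B_R(x_b, r_b)$ with $r \leq \alpha r_b$, the bounded total entropy drop produces a scale $\bar r \in [\alpha r_b, r_b]$ with small pinching, hence (via Lemma~\ref{s-s/e-d}) a nearby $(0,\eta')$--self--similar $z$; since $z$ cannot be $(n,\eta^2)$--self--similar at $\bar r$ (else $y$ would be $(n,\eta)$--self--similar there), the gap lemma gives a scalar curvature lower bound at $z$ that contradicts $|\mathrm{Rm}| \leq (2r_b)^{-2}$ on the $b$--ball. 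Your blow--down compactness argument packages this entire chain into a single step, avoiding both the entropy pigeonholing and the gap lemma, at the cost of a soft limiting argument rather than an effective estimate; the paper's route, by contrast, isolates a quantitative scalar curvature statement of independent interest. Two minor corrections to your write--up: in the last paragraph it is the non--collapsing estimate \eqref{non-coll}, not non--inflation, that yields Euclidean volume growth of the flat limit and thereby excludes quotients such as $\mathbb R^{n-1}\times S^1$ (mere non--compactness would not suffice); and since $r^{2R}_{Rm} \leq \sqrt{2}$ by definition on a flow over $[-2,0]$, the hypothesis $r^{2R}_{Rm}(x_j) \geq j s_j$ already forces $s_j \to 0$, so the time--window concern you flag resolves itself automatically.
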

\noindent
In order to do that we will need the following gap lemma for the scalar curvature. 

\begin{LEMMA}\textup{(Gap Lemma)} \label{gap lemma}Fix $\zeta>0$ and assume that $(M, g(t), z)\in \mathcal{F}(n,C_I, \Lambda)$ is $(0, \eta')$--self--similar at scale 1 and not $(n, \eta^2)$--self--similar at scale 1. Then if $\eta' \leq \eta'(n, C_I, \Lambda, \eta, \zeta)$ there exists $\epsilon (n, C_I)>0 $ such that $|\text{Scal}|_{g(t)}(z) \geq \dfrac{\epsilon}{|t|}$ for all $t \in (-\zeta^{-1}, -\zeta).$ 
    
\end{LEMMA}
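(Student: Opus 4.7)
My plan is a standard contradiction-and-compactness argument, whose heart is the rigidity furnished by the non--negativity of the scalar curvature on shrinking Ricci solitons.

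First, I negate the conclusion: for each $j$ there exist $\eta'_j\to 0$, $\epsilon_j\to 0$, a time $t_j\in(-\zeta^{-1},-\zeta)$, and a pointed flow $(M_j,g_j(t),z_j)\in\mathcal F(n,C_I,\Lambda)$ which is $(0,\eta'_j)$--self--similar at scale $1$, is not $(n,\eta^2)$--self--similar at scale $1$, and satisfies $|\mathrm{Scal}|_{g_j(t_j)}(z_j)<\epsilon_j/|t_j|\leq \epsilon_j/\zeta$. After passing to a subsequence, Theorem \ref{compactness}\eqref{II} yields convergence in the pointed Cheeger--Gromov--Hamilton sense of $(M_j,g_j(t),z_j)$ to a $0$--self--similar limit flow $(M_\infty,g_\infty(t),z_\infty)_{(-\infty,0)}$ induced by a gradient shrinking soliton, with $z_\infty\in S_{\mathrm{point}}$. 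Extracting a further subsequence gives $t_j\to t_\infty\in[-\zeta^{-1},-\zeta]$, and the smoothness of the convergence, combined with $|t_j|\geq\zeta$, forces $\mathrm{Scal}_{g_\infty(t_\infty)}(z_\infty)=0$.

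The key step---and the main obstacle---is then to conclude that $(M_\infty,g_\infty)$ must be the Gaussian soliton. I would invoke the theorem of B.--L.\ Chen asserting that the scalar curvature of any complete ancient solution to the Ricci flow is non--negative. Applying the strong maximum principle to the evolution $\partial_t \mathrm{Scal}=\Delta\mathrm{Scal}+2|\mathrm{Ric}|^2$ then propagates the vanishing at $(z_\infty,t_\infty)$ to all of $M_\infty\times(-\infty,t_\infty]$, hence to the whole flow by the self--similarity. The shrinking soliton equation $\mathrm{Ric}_{g_\infty}+\nabla^2 f_\infty=g_\infty/(2|t|)$, once traced with $\mathrm{Scal}\equiv 0$, forces $\Delta f_\infty=n/(2|t|)$ and $\mathrm{Ric}_{g_\infty}\equiv 0$, so $(M_\infty,g_\infty)$ is a Ricci--flat complete gradient shrinker, and by the classification of such shrinkers it is necessarily the Gaussian soliton on $\mathbb R^n$.

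Finally, the convergence $(M_j,g_j(t),z_j)\to(\mathbb R^n,g_{\mathrm{Eucl}},z_\infty)$ implies that for $j$ sufficiently large the flow $(M_j,g_j(t),z_j)$ is $(n,\eta^2)$--self--similar at scale $1$, contradicting the standing assumption. The dependence of $\epsilon$ on $n$ and $C_I$ comes in only quantitatively through the uniform regularity of the convergence in Theorem \ref{compactness}\eqref{II}; the other parameters $\Lambda,\eta,\zeta$ are absorbed into the threshold for $\eta'$. The core difficulty is step three: invoking Chen's non--negativity and the rigidity of Ricci--flat shrinkers to pinpoint the limit as Gaussian; once this is in place, the rest of the argument is routine compactness.
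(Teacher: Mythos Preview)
Your proof is correct and follows essentially the same contradiction--and--compactness route as the paper. The only cosmetic difference is that you invoke Chen's non--negativity theorem explicitly before applying the strong maximum principle, whereas the paper first uses $z_\infty\in S_{\mathrm{point}}$ and self--similarity to get $\mathrm{Scal}_{g_\infty(t)}(z_\infty)=0$ for all $t<0$ and then applies the strong maximum principle; both variants lead to the same Ricci--flat shrinker rigidity and hence the Gaussian contradiction.
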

 
\begin{proof} We argue by contradiction.Assume that there is a sequence $\eta'_j \to 0$ and Ricci flows $(M_j, g_j(t), z_j)\in \mathcal{F}(n,C_I, \Lambda)$ that are $(0,\eta'_j)$--self--similar and not $(n,\eta^2)$--self--similar at scale 1, and there are $t_j \in (\zeta^{-1},-\zeta)$ such that $|t_j|\:|\text{Scal}|_{g_j(t_j)}(z_j) \to 0.$ From Theorem \ref{compactness}\eqref{II} and for $\eta'_j \leq \eta'(n, C_I, \Lambda, \eta, \zeta)$ we obtain a limit $0$--self--similar flow  $(M_\infty, g_\infty(t), z_\infty)$ that is not $(n, \frac{\eta^2}{2})$--self--similar. On the other hand, up to a subsequence $t_j \to t_\infty$, and $\text{Scal}_{g_\infty(t_\infty)}(z_\infty)=0$. Since $z_\infty \in S_\text{point}$ we conclude that $\text{Scal}_{g(t)}(z_\infty) =0$ for all $t<0$, and by the strong maximum principle we conclude that the scalar curvature vanishes everywhere. 

From the evolution equation for the scalar curvature we conclude that the flow is Ricci flat, so the gradient shrinking Ricci soliton equation gives that the soliton functions $f_t$, $t<0$, satisfy $\nabla^2_{g(t)}f_t = \frac{1}{2|t|}g(t)$ for all $t<0.$ The latter is enough to imply that the flow is the Gaussian soliton which is a contradiction. 
\end{proof}

\begin{proof}[Proof of Theorem \ref{content est}] We may assume that $M$ is orientable. From Theorem \ref{b-ball dec} for $R \geq R(n, C_I, \Lambda)$ we obtain a covering of the form $\tilde B_R(p,1) \subset \bigcup_b \tilde B_R(x_b, r_b) $, where $ r_{Rm}^{2R}(x_b) >2 r_b$ and $\sum_b r_b^{n-2}\leq C(n,C_I, \Lambda, R)$. In order to prove the estimate we have to investigate the intersections $\mathcal{S}^{n-2}_{\eta,r^2} \cap \tilde B_R(x_b, r_b)$.

Assume that $ r\leq \alpha r_b$, where $0<\alpha<1$ will be fixed later, and let $y\in \mathcal{S}^{n-2}_{\eta,r^2} \cap \tilde B_R(x_b, r_b)$. By the monotonicity of the pointed entropy at $y$ and the lower bound on the $\nu$-entropy, we conclude that for any $\theta>0$ and $\alpha \leq \alpha(\theta)$ there exists $\bar r \in [\alpha r_b, r_b]$ such that
$$\mathcal W_y(\bar r^2 \theta)-\mathcal W_y(\bar r^2 \theta^{-1}) <\theta.$$ 
Thus, if $\theta \leq \theta (n, C_I, \Lambda, R, \eta')$, by Lemma \ref{s-s/e-d} there exists $z \in \tilde B_{R/2}(y, \bar r)$ so that $(M, g(t), z)$ is $(0, \eta')$--self--similar at scale $\bar r.$ 

If $(M, g(t), z)$ is $(n, \eta^2)$--self--similar at scale $\bar r$ then $(M,g(t),y)$ is $(n,\eta)$--self--similar at scale $\bar r$ (since in that case $S_{\text{point}}=\mathbb R^n$). Since $r\leq \bar r$ the latter contradicts the fact that $(M, g(t), y)$ is not $(n,\eta)$--self--similar at any scale $[r,1].$ Thus $(M, g(t), z)$ is not $(n, \eta^2)$--self--similar at scale $\bar r$. Let $\zeta>0$ to be fixed later and fix $\eta'\leq \eta'(n, C_I, \Lambda, R, \eta, \zeta)$ so that by Lemma \ref{gap lemma} there exists $\epsilon(n, C_I)>0$ such that $|\text{Scal}|_{g(t)}(z) \geq \frac{\epsilon}{|t|}$ for all $t \in (-\zeta^{-1}\bar r^2, -\zeta\bar r^2).$
Since $z \in \tilde B_{2R}(x_b, r_b)$ and the latter is a $b$--ball we also have that 
$$|\text{Scal}_{g(t)}|(z) \leq \frac{C(n)}{r_b^2},$$
for al $t \in [-r_b^2,0].$
Put together, if we take $t=-\zeta \bar r^2$ we arrive at a contradiction provided that $\zeta < C(n)\epsilon.$ In conclusion, if we fix $\alpha:=\alpha(n, C_I, \Lambda, R, \eta)$ then we have shown that for $r \leq \alpha r_b$ 
$$\mathcal{S}^{n-2}_{\eta,r^2}\cap\tilde B_{R}(x_b,r_b)=\emptyset.$$ 

Using the previous conclusion along with \eqref{volume comp}, the non--inflating \eqref{non-inf} and the $(n-2)$--content estimates we have that
$$\vol_{g(0)}(\mathcal{S}^{n-2}_{\eta,r^2}\cap \tilde B_R(p,1)) \leq \sum_{r_b<\alpha^{-1}r} \vol_{g(0)}( \mathcal{S}^{n-2}_{\eta,r^2}\cap \tilde B_R(x_b,r_b))$$ $$\leq C(n, C_I, \Lambda) \sum_{r_b < \alpha^{-1}r} r_b^n \leq  C(n, C_I, \Lambda, R, \eta) r^2, $$
which shows the desired estimate.
\end{proof}

\begin{small}

\end{small}
\end{document}